
\documentclass[11pt]{amsart}     
\usepackage{graphicx}
\usepackage{times}      
\usepackage{latexsym}   
\usepackage{amssymb}    
\usepackage{amsmath}    
\usepackage{amsbsy}
\usepackage{amsthm}
\usepackage{amsgen}
\usepackage{amsfonts}
\usepackage{array}
\usepackage{epsfig}
\usepackage{hyperref}
\usepackage{color}
\usepackage{verbatim}
\usepackage{psfrag}

%

\newtheorem{theorem}{Theorem}[section]

\newtheorem{isotropylemma}[theorem]{Isotropy Lemma}
\newtheorem{doublesoulthm}[theorem]{Double Soul Theorem}
\newtheorem{soulthm}[theorem]{Soul Theorem}
\newtheorem{xtlemma}[theorem]{Extent Lemma}

\newtheorem*{thma}{Theorem A}

\newtheorem{corollary}[theorem]{Corollary}

\newtheorem{lemma}[theorem]{Lemma} 
\newtheorem{proposition}[theorem]{Proposition}

\newtheorem{proof-ref}{Proof(Proposition{P:5t2})}
\def\lpq{L_{p, q}}

\def\rrr{\mathbb{R}}
\def\ccc{\mathbb{C}}
\def\zzz{\mathbb{Z}}

\DeclareMathOperator{\Fix}{Fix}
\def\bdm{\begin{displaymath}}
\def\edm{\end{displaymath}}
\def\beq{\begin{equation}}
\def\eeq{\end{equation}}
\def\bes{\begin{equation*}}
\def\ees{\end{equation*}}
\def\epcm{\end{picture}\end{center}\end{minipage}}
\def\bpcm{\begin{minipage}{80pt}\begin{center}\begin{picture}}
\def\Mb{\textrm{Mb}}
\def\t2{T^2}

\def\f4{F_4}
\def\g2{G_2}

\def\p2{\frac{\pi}{2}}
\def\dist{\textrm{dist}}

\def\Fix{\textrm{Fix}}
\def\txt{\textrm}

\def\Kl{\txt{Kl}}
\def\d{\partial}
\def\dim{\textrm{dim}}
\def\ra{\rightarrow}

\DeclareMathOperator{\Cohomfix}{cohomfix}
\DeclareMathOperator{\curv}{curv}
\DeclareMathOperator{\xt}{xt}
\DeclareMathOperator{\rk}{rk}

\theoremstyle{definition}

\newtheorem{example}[theorem]{Example}
\newtheorem*{ack}{Acknowledgements}
\newtheorem{remark}[theorem]{Remark}

%

\begin{document}


\title[Non-negatively curved $5$-manifolds with almost maximal symmetry rank]{Non-negatively curved ${\bf 5}$-manifolds with almost maximal symmetry rank
	}






\author[F. Galaz-Garcia]{Fernando Galaz-Garcia}
\address[F. Galaz-Garcia]{Mathematisches Institut, Westf\"alische Wilhelms-Universit\"at M\"unster, GERMANY.}
\email[]{f.galaz-garcia@uni-muenster.de}

\author[C. Searle]{Catherine Searle}
\address[C. Searle]{Avenida Domingo Diez 111-13, Colonia Miraval, Cuernavaca, MEXICO.}
\email[]{searle.catherine@gmail.com}


\keywords{Almost, Maximal, Symmetry, Rank, Non-negative, Curvature, 5-manifold, Torus, Action}
\subjclass[2000]{53C20, 57S25, 51M25}

\date{\today}


\begin{abstract}

We show that a closed, simply-connected, non-negatively curved  $5$-man\-ifold admitting an effective, isometric $T^2$ action
is diffeomorphic to one of $S^5, S^3\times S^2$, $S^3\tilde{\times} S^2$ or the Wu manifold $SU(3)/SO(3)$.

\end{abstract}

\maketitle


\section{Introduction}
\label{S:intro}

The classification of Riemannian manifolds with positive, and more generally, non-negative sectional curvature, is a long-standing open problem in Riemannian geometry. As a step towards  general classification  results one may consider manifolds whose isometry group is large. This has been a fruitful avenue of research (see, for example, the surveys \cite{Gr,Wi2,Z,Gr2}). It is well-known that the isometry group of a compact Riemannian manifold is a compact Lie group. In the context of this paper, the measure for the ``size" of  an isometry group is its rank. In particular, we are interested in manifolds with non-negative curvature that have almost maximal symmetry rank, where the \emph{symmetry rank} of a Riemannian manifold $M$ is defined to be the rank of the isometry group of $M$. 

Grove and Searle \cite{GS} showed that the symmetry rank of a closed, positively curved, Riemannian $n$-manifold is bounded above by $[(n+1)/2]$ and classified closed, positively curved Riemannian manifolds with maximal symmetry rank  up to diffeomorphism. For a closed, positively curved Riemannian $n$-manifold of almost maximal symmetry rank, that is, one whose isometry group has rank $[(n-1)/2]$, Rong \cite{R}  found topological restrictions for all dimensions (distinguishing the cases for even and odd) and showed that a closed, simply-connected, positively curved Riemannian $5$-manifold with almost maximal symmetry rank, that is, with an effective isometric $T^2$ action, must be homeomorphic to the $5$-sphere (in fact, it will be diffeomorphic, as a consequence of the Generalized Poincar\'e conjecture). Later, Wilking \cite{Wi} improved these results significantly for closed, positively curved, simply-connected $n$-manifolds of dimension $n\geq 10$, considering actions of rank approximately $n/4$.

The maximal symmetry rank for closed, simply-connected $n$-manifolds with non-negative curvature and dimension $n\leq 9$ is $[2n/3]$ (see \cite{GGS}). Kleiner \cite{K} and Searle and Yang \cite{SY94}  independently 
classified up to homeomorphism closed, simply-connected $4$-manifolds of non-negative curvature with an effective isometric circle action, corresponding to the almost maximal symmetry rank case in dimension $4$.  In \cite{GGS}, the authors classified up to diffeomorphism closed, simply-connected, non-negatively curved Riemannian manifolds 
of dimensions $3$, $4$, $5$ and $6$ with maximal symmetry rank. In this paper we address the case of almost maximal symmetry rank  for closed, simply-connected, non-negatively curved Riemannian manifolds in dimensions $3$, $4$ and $5$. Our main result is the following:


\begin{thma}\label{T:A} Let $M^5$ be a closed, simply-connected, non-negatively curved  $5$-man\-ifold. If $T^{2}$ acts isometrically and (almost) effectively on
$M^5$, then $M^5$ is diffeomorphic to one of $S^5$, $S^3\times S^2$, $S^3\tilde{\times} S^2$ (the non-trivial $S^3$-bundle over $S^2$) or the Wu manifold $SU(3)/SO(3)$.
\end{thma}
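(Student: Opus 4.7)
The plan is to pass to the orbit space $X := M^5/T^2$ and exploit both its topological structure (as the weighted orbit space of a cohomogeneity-three $T^2$-manifold) and its Alexandrov-geometric structure (as a $3$-dimensional non-negatively curved Alexandrov space).

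First, I would set up the orbit space picture. The principal orbits are $2$-tori and the cohomogeneity is $3$; isotropy subgroups are closed subgroups of $T^2$, so either trivial, a finite cyclic group, a circle $S^1_{(p,q)}$, or all of $T^2$. The Alexandrov boundary $\partial X$ is the union of images of orbits with positive-dimensional isotropy and decomposes into singular arcs labelled by the circle isotropies, meeting at vertices corresponding to $T^2$-fixed points. A standard Euler-characteristic argument shows the $T^2$-fixed point set is non-empty, which together with the simple-connectedness of $M^5$ gives that $X$ is simply-connected.

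Second, I would use non-negative curvature to constrain the singular data. For each circle $S^1\subset T^2$, the components of $\Fix(S^1,M^5)$ are totally geodesic, non-negatively curved submanifolds of even codimension; a careful slice analysis at the $T^2$-fixed points yields a short list of admissible local combinatorial models for how singular arcs may meet at vertices of $\partial X$, and hence determines the weighted orbit space up to finite ambiguity. In parallel, $X$ --- a simply-connected $3$-dimensional non-negatively curved Alexandrov space with boundary --- is severely restricted in homeomorphism type (one expects essentially $X\cong D^3$). Matching the admissible weighted-orbit-space combinatorics against the Barden--Smale classification of closed simply-connected $5$-manifolds should then single out exactly the four diffeomorphism types listed in the theorem.

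The main obstacle will be ruling out the equivariant connected sums such as $\#_k(S^3\times S^2)$ for $k\geq 2$, and the analogous sums involving $S^3\tilde{\times} S^2$ and $SU(3)/SO(3)$, which are known to admit effective $T^2$-actions topologically but must be excluded by the curvature hypothesis. The essential geometric input is to show that the weighted orbit-space diagrams for such sums would force $X$ to develop an Alexandrov ``neck'' --- two regions joined by a small $S^2$-like cross section --- incompatible with non-negative curvature in dimension $3$. I expect to eliminate these via the Isotropy Lemma (applied to a circle factor of $T^2$ whose fixed-point set is $4$-dimensional, reducing to the Kleiner / Searle--Yang classification of non-negatively curved simply-connected $4$-manifolds with circle symmetry) together with a Soul-type argument, such as the Double Soul Theorem applied to $X$ or to an intermediate totally geodesic fixed-point component, in order to bound the number of $T^2$-fixed points and singular edges and thus preclude every non-trivial connected sum.
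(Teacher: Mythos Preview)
Your proposal rests on a picture of the orbit space that does not match the actual situation, and several of your key claims are false as stated.

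First, the Euler-characteristic argument cannot show that $\Fix(M^5;T^2)$ is nonempty: $M^5$ is odd-dimensional, so $\chi(M^5)=0$, and $\chi(M^{T^2})=0$ is vacuous. In fact a central case in the paper is precisely when $\Fix(M^5;T^2)=\emptyset$ and the orbit space $X=M^5/T^2$ is homeomorphic to $S^3$ \emph{with empty boundary}. Your expectation that ``essentially $X\cong D^3$'' and your description of $\partial X$ as ``singular arcs labelled by circle isotropies meeting at vertices corresponding to $T^2$-fixed points'' is the picture for a $2$-dimensional orbit space (e.g.\ $T^2$ on a $4$-manifold or $T^3$ on a $5$-manifold), not for cohomogeneity three. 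Here, boundary components of $X$ (when they exist) are $2$-spheres coming from codimension-$2$ fixed sets of circle subgroups; the interior singular set in the $X\cong S^3$ case consists of isolated points (projections of isolated circle orbits) and arcs of finite cyclic isotropy, and this is where most of the work lies.

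Second, no circle in $T^2$ can have a $4$-dimensional fixed-point set in $M^5$: fixed-point components of a circle action on an orientable manifold have even codimension, hence dimension $1$ or $3$ here. So the reduction you propose to the Kleiner/Searle--Yang $4$-manifold result cannot be set up that way. The paper's actual argument splits into two cases: when some circle has a $3$-dimensional fixed component (so $\partial X\neq\emptyset$) one decomposes $M^5$ as a union of disc bundles over $F^3$ and over the set at maximal distance, after first restricting the possibilities for $F^3$ via the induced cohomogeneity-one $T^2$-action; when $X\cong S^3$ one shows there are between three and four isolated circle orbits (the lower bound is topological, the upper bound is a delicate Toponogov/extent argument in the Alexandrov orbit space), analyzes the weighted graph of finite-isotropy arcs, proves any cycle in it is unknotted via a branched-cover argument, and again obtains a disc-bundle decomposition. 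In both cases one finishes by computing $H_2(M^5)$ and invoking Barden--Smale. Your sketch misses the $X\cong S^3$ case entirely and does not have the right tools for it.
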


We remark that the $5$-manifolds listed  in Theorem~A are all the known examples of closed, simply-connected $5$-manifolds with non-negative  curvature and these manifolds are all the closed, simply-connected $5$-dimensional homogeneous spaces or biquotients of Lie groups (cf. \cite{DV,Pv}). We also point out that the $5$-manifolds listed in Theorem~A  coincide with the closed, simply-connected $5$-manifolds that are elliptic (cf. \cite{PP}).  Further, each one of these $5$-
manifolds $M$ is rationally elliptic,  that is, $\dim\,\pi_{*}(M)\otimes \mathbb{Q} < \infty$, thus satisfying the Ellipticity Conjecture, which states that all closed, simply-connected manifolds of (almost) non-negative curvature are rationally elliptic (cf. \cite{Gr}). It is also worth noting that these are exactly the $5$-dimensional topological manifolds $M$ for which 
${\textrm{cat}}_{S^2}(M)=2$, that is, $M$ can be covered by two open subsets $W_1$, $W_2$ such that the inclusions \linebreak $W_i\hookrightarrow M$ factor homotopically through maps $W_i\rightarrow S^2$  (cf. \cite{GGH}).

This paper is divided into seven sections. The first two sections comprise the introduction and 
basic tools we will use throughout. In section 3, using classification results for smooth circle actions on $3$- and $4$-manifolds, in combination with restrictions imposed by non-negative curvature, we classify closed, orientable manifolds with non-negative curvature and almost maximal symmetry rank in dimension $3$ and recall the classification of closed, simply connected manifolds with non-negative curvature and almost maximal symmetry rank in dimension $4$.  In section 4 we consider the problem of almost maximal symmetry rank in dimension $5$ from a purely topological perspective and in section 5 we find restrictions imposed by non-negative curvature. In section 6 we classify closed, simply-connected, non-negatively curved $5$-manifolds of almost maximal symmetry rank by applying the results of the previous three sections. Finally, in section~7 
we give examples of actions of almost maximal symmetry rank on some of the manifolds listed in Theorem~A.

\begin{ack}
The authors thank  both Karsten Grove and Burkhard Wilking for pointing out an omission in an earlier version and for helpful  conversations and suggestions 
for improvements. Both authors would also like to thank 
the American Institute of Mathematics (AIM)  for its support during a workshop where the work on this paper was initiated.  F. G. G. also thanks the Mathematics Department of the University of Maryland, for its financial support,  and the Department of Mathematics of the University of Notre Dame, as well as the IMATE-UNAM, Cuernavaca, for their hospitality while part of this work was carried out.  C. S. was supported in part by CONACyT Project \#SEP-106923, CONACyT Project \#SEP-82471.
\end{ack}

\section{Definitions and tools}\label{S:1}

In this section we gather several definitions and results that we will use in subsequent sections. 


\subsection{Transformation groups}
Let $G$ be a Lie group acting (on the left) on a smooth manifold $M$.  We denote by $G_x=\{\, g\in G : gx=x\, \}$ the \emph{isotropy group} at $x\in M$ and by $Gx=\{\, gx : g\in G\, \}\simeq G/G_x$ the \emph{orbit} of $x$. The \emph{ineffective kernel} of the action is the subgroup $K=\cap_{x\in M}G_x$. We say that $G$ acts \emph{effectively} on $M$ if $K$ is trivial. The action is called \emph{almost effective} if $K$ is finite. The action is \emph{free} if every isotropy group is trivial and \emph{almost free} if every isotropy group is finite. 
We will denote the \emph{fixed point set} $M^G=\{\, x\in M : gx=x, g\in G \, \}$ of this action by $\Fix(M ; G )$ and define its dimension as $\dim(\Fix(M;G))=\max \{\,\dim(N): \text{$N$ is a component of $\Fix(M;G)$}\,\}$. When convenient, we will denote the orbit space $M/G$ by $X$. We will denote by $\overline{p}$ the image  of a point $p\in M$ under the orbit  projection map $\pi:M\rightarrow M/G$. Given a subset $A\subset M$, we will denote its image in $X$ under the orbit projection map by $A^*$ and when convenient, we shall also denote the orbit space $M/G$ by $M^*$.

 One measurement for the size of a transformation group $G\times M\rightarrow M$ is the dimension of its orbit space $M/G$, also called the {\it cohomogeneity} of the action. This dimension is clearly constrained by the dimension of the fixed point set $M^G$  of $G$ in $M$. In fact, $\dim (M/G)\geq \dim(M^G) +1$ for any non-trivial action. In light of this, the {\it fixed-point cohomogeneity} of an action, denoted by $\Cohomfix(M;G)$, is defined by
\[
\textrm{cohomfix}(M; G) = \dim(M/G) - \dim(M^G) -1\geq 0.
\]
A manifold with fixed-point cohomogeneity $0$ is also called a {\it fixed point homogeneous manifold}. We will use the latter term throughout this article. We observe that the fixed point set of a fixed point homogeneous action has codimension 1 in the orbit space.

\begin{remark}
Throughout  the rest of the paper we will assume all manifolds to be smooth. We will only consider smooth (almost) effective actions and all homology and cohomology groups will have coefficients in $\zzz$, unless otherwise stated. 
\end{remark}


\subsection{Alexandrov geometry} Recall that a finite dimensional length space $(X,\mathrm{dist})$ is an \emph{Alexandrov space} if it has curvature bounded from below (cf. \cite{BBI}).
 When $M$ is a complete, connected Riemannian manifold and $G$ is a compact Lie group acting on $M$ by isometries, the orbit space $X=M/G$ is equipped with the orbital distance metric induced from $M$, that is, the distance between $\overline{p}$ and $\overline{q}$ in $X$ is the distance between the orbits $Gp$ and $Gq$ as subsets of $M$.  If, in addition, $M$ has sectional curvature bounded below, that is, $\sec M\geq k$, then the orbit space $X$ is an Alexandrov space with $\curv X \geq k$. 
 
The \emph{space of directions} of a general Alexandrov space at a point $x$ is
by definition the completion of the 
space of geodesic directions at $x$. In the case of orbit spaces $X=M/G$, the space of directions $\Sigma_{\overline{p}}X$ at a point $\overline{p}\in X$ consists of geodesic directions and is isometric to
\[
S^{\perp}_p/G_p,
\] 
where $S^{\perp}_p$ is the unit normal sphere to the orbit $Gp$ at $p\in M$.

We now state Kleiner's Isotropy Lemma (cf. \cite{K}), which we will use to obtain information on the distribution of the isotropy groups along minimal geodesics  joining two orbits and, in consequence, along minimal geodesics joining two points in the orbit space $X=M/G$.


\begin{isotropylemma}
\label{l:Kleiner}
 Let $c:[0,d]\rightarrow M$ be a
minimal geodesic between the orbits $G c(0)$ and $G c(d)$. Then, for
any $t\in (0,d)$, $G_{c(t)}=G_{c}$ is a subgroup of $G_{c(0)}$ and
of $G_{c(d)}$.
\end{isotropylemma}

Recall that the $q$-\emph{extent} $\xt_q(X)$, $q\geq 2$, of a compact metric space $(X,d)$ is the maximum average distance between $q$ points in $X$:
\[
\xt_q(X)=\binom{q}{2}^{-1}\max\{\, \sum_{1\leq i<j\leq q}d(x_i,x_j):\{x_i\}_{i=1}^{n}\subset X\,\}.
\] 

The Extent Lemma (cf. \cite{GS2}) stated below provides an upper bound on the total number of isolated 
singular points in $X=M/G$.


\begin{xtlemma}\label{L:xt} Let $\overline{p}_0, \ldots, \overline{p}_q$ be $q+1$ distinct points in  $X=M/G$. If $\curv X\geq 0$, then
\[
\frac{1}{q+1}\sum_{i=0}^{q}\xt_q (\Sigma_{\overline{p}_i}X) \geq\pi/3.
\] 
\end{xtlemma} 
We remark that in the case of strictly positive curvature, the inequality is also strict.

We will also use the following analogue for orbit spaces of the Cheeger-Gromoll Soul Theorem to obtain information on the geometry of the orbit space $X=M/G$.  


\begin{soulthm} \label{T:soulthm} Let $X=M/G$. If $\curv X\geq 0$ and $\partial X\neq
\emptyset$, then there exists a totally convex compact subset
$S\subset X$ with $\partial S=\emptyset$, which is a strong
deformation retract of $X$. If $\curv M/G>0$, then $S=\overline{s}$ is a
point, and $\partial X$ is homeomorphic to $\Sigma_{\overline{s}}X\simeq
S_{s}^{\perp}/G_s$.
\end{soulthm}

When $M$ is a non-negatively curved, fixed point homogeneous Riemannian $G$-manifold, the orbit space $X$ is a non-negatively curved Alexandrov space and $\partial X$ contains a component $N$ of $\Fix(M;G)$. Let $C\subset X$ denote the set at maximal distance from $N\subset \partial X$ and let $B=\pi^{-1}(C)$. The Soul Theorem~\ref{T:soulthm} implies that $M$ can be written as the union of neighborhoods $D(N)$ and $D(B)$ along their common boundary $E$, that is, 
\[
M=D(N)\cup_E D(B).
\]
In particular, when $G=T^1$ and $C$ is another fixed point set component with maximal dimension, one has the following result from \cite{SY94}.


\begin{doublesoulthm}\label{T:sy2}  Let $M$ be a complete, non-negatively curved Riemannian manifold admitting an isometric $T^1$ action. If $\Fix(M;T^1)$ contains two codimension $2$ components $N_1$ and $N_2$, with one of them being compact, then $N_1$ is isometric to $N_2$,  $\Fix(M; T^1) = N_1\cup N_2$, and $M$ is diffeomorphic to an $S^2$-bundle over $N_1$ with $T^1$ as its structure group. In other words, there is a principal $T^1$-bundle, $P$, over $N_1$ such that $M$ is diffeomorphic to $P\times_{T^1} S^2$.
\end{doublesoulthm}


\subsection{Closed $3$-manifolds with a smooth $T^2$ action}
\label{SS:3MT2}

We recall the list of closed $3$-manifolds with a smooth cohomogeneity one $T^2$ action (cf. \cite{M,N}), as they will appear throughout the paper.  They are:  $S^3$, a lens space $\lpq$, $S^2\times S^1$, $\rrr P^2\times S^1$, $T^3$, $S^2\tilde{\times} S^1$, $\Kl\times S^1$ and $A$. Here $\Kl$ denotes the $2$-dimensional Klein bottle and $S^2\tilde{\times} S^1$ the non-trivial $S^2$-bundle over $S^1$. The manifold $A$ is obtained by gluing $\Mb\times S^1$ and $S^1\times \Mb$ along their boundary torus, where $\Mb$ denotes the M\"obius band.


\section{Non-negatively curved 3- and 4-manifolds with almost maximal symmetry rank}\label{S:2}

In this section we classify closed, orientable $3$-manifolds and closed, simply-connected $4$-manifolds, assuming they have non-negative curvature and admit  an isometric action of a circle $T^1$.

 
\subsection{Dimension 3}

In the case of a $T^1$ action, we have the following result, which follows from the Orlik-Raymond-Seifert classification of $3$-manifolds with a \linebreak
smooth $T^1$ action \cite{Or,OR}.


\begin{theorem}\label{T:n3.1}
Let $T^1$ act isometrically on $M^3$, a closed, orientable $3$-manifold of non-negative curvature. Then 
$M^3$ is equivariantly diffeomorphic to a spherical $3$-manifold, $S^2\times S^1$,   $\rrr P^3\# \rrr P^3$, $T^3$ or one of four $T^2$-bundles over $S^1$.
\end{theorem}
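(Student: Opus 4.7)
The plan is to combine the Orlik-Raymond-Seifert classification \cite{Or,OR} of closed, orientable $3$-manifolds admitting a smooth effective $T^1$ action, which is already stated up to equivariant diffeomorphism, with the strong topological restrictions on $M^3$ coming from non-negative sectional curvature.

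First, I would invoke Hamilton's theorem for $3$-manifolds with non-negative Ricci curvature (or, equivalently, the solution of Thurston's geometrization conjecture): a closed, orientable $3$-manifold admitting a non-negatively curved Riemannian metric has universal cover diffeomorphic to $S^3$, $S^2\times\mathbb{R}$, or $\mathbb{R}^3$. Accordingly, $M^3$ is diffeomorphic either to a spherical space form, to one of the two closed orientable quotients of $S^2\times\mathbb{R}$ (namely $S^2\times S^1$ or $\mathbb{R}P^3\#\mathbb{R}P^3$, the mapping torus of the antipodal map on $S^2$), or to one of the six closed orientable flat $3$-manifolds from the Bieberbach classification.

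Next, I would cross-reference with the Orlik-Raymond list. Every spherical space form is Seifert fibered and admits a natural smooth $T^1$ action (for instance from the Hopf action, restricted to the quotient), and both $S^2\times S^1$ and $\mathbb{R}P^3\#\mathbb{R}P^3$ appear explicitly in the Orlik-Raymond list. Among the closed orientable flat $3$-manifolds, exactly five admit an effective smooth $T^1$ action: the $3$-torus $T^3$ and the four non-trivial $T^2$-bundles over $S^1$ corresponding to the finite-order elements of $SL(2,\mathbb{Z})$ of orders $2$, $3$, $4$, and $6$. The remaining orientable flat $3$-manifold, the Hantzsche-Wendt manifold, has vanishing first Betti number and therefore admits no effective smooth $T^1$ action, so it is excluded. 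This exactly produces the list in the theorem.

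The main obstacle I anticipate is upgrading the diffeomorphism statement to an equivariant one, since, a priori, a given underlying manifold on our list might carry several smooth $T^1$ actions that are not equivariantly diffeomorphic. I expect to handle this by noting that in each of the surviving cases the Seifert invariants (orbit types, fixed-point data, and Euler number) are constrained tightly enough by the ambient non-negatively curved geometry that any isometric $T^1$ action is conjugate, via an equivariant diffeomorphism, to the standard Orlik-Raymond model, which is precisely what the equivariant conclusion requires.
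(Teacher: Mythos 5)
Your proposal is correct in outline, but it takes a genuinely different route from the paper. You classify the underlying manifold $M^3$ first, via Hamilton's theorem on closed $3$-manifolds of non-negative Ricci curvature (universal cover $S^3$, $S^2\times\mathbb{R}$, or $\mathbb{R}^3$), and then intersect the resulting list with the Orlik--Raymond list of closed orientable $3$-manifolds admitting a smooth circle action, using $b_1(M)=0$ to discard the Hantzsche--Wendt manifold. The paper instead organizes the proof by the \emph{type of action} on $M^3$: if $T^1$ acts freely, the quotient is a non-negatively curved orientable surface ($S^2$ or $T^2$ by Gauss--Bonnet) and $M^3$ is a principal circle bundle over it; if $T^1$ acts almost freely, $M^3$ is a closed orientable Seifert manifold with $S^3$, $S^2\times\mathbb{R}$, or $E^3$ geometry, whose circle-action Seifert fibrations are tabulated in Orlik and Scott; and if $\mathrm{Fix}(M^3;T^1)\neq\emptyset$, the action is fixed point homogeneous and one analyzes the orbit space $D^2$ (using the Double Soul Theorem to handle two fixed circles, and the non-negatively curved Alexandrov structure on $D^2$ to bound the number of singular points by two). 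Your argument is shorter and uses a stronger topological input (Hamilton/geometrization), while the paper's argument is purely ``soft'' Alexandrov/transformation-group geometry and is uniform with the dimension~$4$ and~$5$ cases treated later. One small point: your closing concern about upgrading to an equivariant statement is slightly overcautious, since the Orlik--Raymond--Raymond classification is already stated up to equivariant diffeomorphism, so once the diffeomorphism type and the smooth $T^1$ action are pinned down the equivariant conclusion is automatic — which is also what the paper's final sentence invokes via Raymond's classification.
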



\begin{proof} We break the proof into three cases: case 1, where the action is free, case 2, where the action is almost free, and case 3, where the action has non-trivial fixed point set.


\vskip .5cm
\noindent\emph{Case 1: $ T^1$ acts freely}
\vskip .25cm
In this case $X^2=M^3/T^1$ is a closed, orientable  $2$-manifold of non-negative curvature and thus $X^2=S^2$ or $X^2=T^2$ by the Gauss-Bonnet theorem. Since the action is free, $M^3$ is a principal circle bundle over $X^2$ and therefore
$M^3$ is diffeomorphic to one of $S^3$, $\lpq$, $S^2\times S^1$ or $T^3$.


\vskip .5cm
\noindent\emph{Case 2: $T^1$ acts almost freely}
\vskip .25cm
Here $M^3$ is a Seifert manifold supporting a smooth circle action. Since we have assumed that $M^3$ has non-negative curvature, $M^3$ admits a geometric structure modeled on  $S^3$, $S^2\times \rrr$ or Euclidean space $E^3$ (cf. \cite{Sc}). Closed, orientable Seifert manifolds with $S^3$, $E^3$ or $S^2\times \rrr$ geometry supporting a smooth $T^1$ action have been classified (cf. \cite{Sc,Or}). When $M^3$ has $S^3$ geometry, $M^3$ must be diffeomorphic to a spherical $3$-manifold, that is, a quotient of $S^3$ by a finite subgroup of $SO(4)$ acting freely on $S^3$. 
We denote these manifolds in the usual fashion by their $2$-dimensional orbit spaces. The $3$-sphere $S^3$ is denoted by $S^2$ and $\lpq$ by $S^2(p)$ or $S^2(p, q)$. The remaining manifolds with $S^3$ geometry are denoted by $S^2(2, 2, n)$, $S^2(2, 3, 3)$, $S^2(2, 3, 4)$ and $S^2(2, 3, 5)$, where $n\geq 2$ is an integer.

 When $M^3$ has $S^2\times \rrr$ geometry, $M^3$ must be $S^2\times S^1$ and, when $M^3$ has $E^3$ geometry, it must be diffeomorphic to $T^3$ or to one of four of the remaining five possible orientable, closed flat manifolds covered by $T^3$. The fifth possibility is excluded immediately since it does not admit a circle action. These four flat manifolds covered by $T^3$ are $T^2$ bundles over $S^1$, described in \cite{Or}. Their orbit spaces are $S^2(2, 2, 2, 2)$, $S^2(2, 4, 4)$, $S^2(2, 3, 6)$ and $S^2(3, 3, 3)$. Further, all of these closed, orientable $3$-manifolds with $E^3$, $S^3$ or $S^2\times \rrr$ geometry, with a Seifert fibration induced by an almost free circle action, do admit isometric circle actions inducing the given Seifert fibration (cf. \cite{Sc,Or}). 


\vskip .5cm
\noindent\emph{Case 3: $T^1$ has non-trivial fixed point set}
\vskip .25cm


By definition, the action is fixed point homogeneous. Closed fixed point homogeneous manifolds $3$-manifold with nonnegative curvature were classified in \cite{GG} and we recall their classification in the orientable case.  Observe first that the fixed point set is $1$-dimensional, with at most two components, and these components are circles. If $\Fix(M^3;S^1)$ contains two components, then by the Double Soul Theorem \ref{T:sy2} we see that $M^3$ is 
one of the two $S^2$ bundles over $S^1$ and since $M^3$ is assumed to be orientable, it must be $S^2\times S^1$. If $\Fix(M^3;S^1)$ consists of a single component $F^1$, then  $X^2=M^3/S^1$ is a $2$-dimensional Alexandrov space of non-negative curvature with boundary $F^1\cong S^1$. Thus $X^2$ is an orientable, non-negatively curved topological manifold with boundary  and the only possibilities are  $D^2$ and $S^1\times I$.  We may exclude $S^1\times I$ since $M^3$ is assumed to be orientable. Thus $D^2$ is the only possible orbit space. 
The non-negative curvature hypothesis implies that  the interior of $D^2$ has either no points with non-trivial finite isotropy, one point with finite isotropy $\zzz_p$ or two points with finite isotropy $\zzz_2$. These correspond, respectively, to $S^3$, a lens space $\lpq$ and $\rrr P^3\# \rrr P^3$. 
\\

It follows from the three cases analyzed above that $M^3$ can only be  $S^3/\Gamma$, where $\Gamma$ is a finite subgroup of $SO(4)$, $ S^2\times S^1$, $T^3$, one of the four flat $T^2$-bundles over $S^1$ covered by $T^3$  or, finally, $ \rrr P^3\# \rrr P^3$. Each of these manifolds supports only one isometric $T^1$ action with non-negative curvature yielding the possible orbit space structures (cf. \cite{Ra}).
\end{proof}

 
\subsection{Dimension 4}

Given Perelman's work on the Poincar\'e conjecture \cite{P1,P2,P3}, the classification of closed, simply-connected, non-negatively curved $4$-man\-i\-folds admitting an  isometric  $T^1$ action follows from earlier classification results in a curvature-free setting and a restriction on the Euler characteristic, which is a simple consequence of the non-negative curvature assumption. 
The first theorem is due to work of Fintushel \cite{F1,F2} in combination with work of Pao \cite{Pa} and Perelman's proof of the Poincar\'e conjecture \cite{P1,P2,P3}.


\begin{theorem}\label{T:FPP} A closed, simply-connected smooth $4$-manifold with a $T^1$ action is equivariantly diffeomorphic to a connected sum of $S^4$, $\pm\ccc P^2$ and $S^2\times S^2$.
\end{theorem}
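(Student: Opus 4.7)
My plan is to reduce the smooth equivariant classification to three ingredients: Fintushel's classification of smooth $T^1$-actions on $4$-manifolds via weighted orbit spaces, Pao's equivariant decomposition of simply-connected $T^1$-$4$-manifolds into standard pieces, and Perelman's resolution of the $3$-dimensional Poincar\'e conjecture, which removes the hypothetical ``fake $3$-cell'' summand from Pao's original statement.

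First I would analyse the orbit space $M^{*}=M^{4}/T^{1}$. Since $T^{1}$ acts smoothly, each fixed component has even codimension and is thus either an isolated point or a closed $2$-surface; the surfaces lie in $\partial M^{*}$, while the isolated fixed points sit in the interior of $M^{*}$. The set of orbits with non-trivial finite isotropy $\zzz_{p}$ projects to a $1$-dimensional locus consisting of arcs in $\partial M^{*}$ joining $2$-dimensional fixed components (or an interior fixed point and a $2$-dimensional one) and possibly circles in the interior. Away from these strata $M^{*}$ is a topological $3$-manifold, and globally it is a topological $3$-manifold with boundary. Fintushel's theorem asserts that the triple (topological $3$-manifold, singular graph, isotropy labels) determines the $T^{1}$-action on $M^{4}$ up to equivariant diffeomorphism.

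Second, I would show $M^{*}$ is a standard $3$-ball. A standard covering-space / van Kampen computation applied to the orbit map shows that $\pi_{1}(M^{*})$ surjects onto $\pi_{1}(M^{4})/N$ for a normal subgroup $N$ generated by orbit meridians, and simple-connectedness of $M^{4}$ together with the presence of at least one isotropy or fixed stratum forces $\pi_{1}(M^{*})=1$. Hence $M^{*}$ is a compact simply-connected $3$-manifold with non-empty boundary (the free case is excluded by $\pi_1(M^4)=1$), so by Perelman's theorem $M^{*}\cong D^{3}$.

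Third, with $M^{*}=D^{3}$ in hand, I would run Pao's equivariant cutting argument: choose invariant $3$-spheres in $M^{4}$ sitting over properly embedded $2$-disks in $D^{3}$ that isolate a single elementary configuration of the weighted graph (an interior isolated fixed point, an arc of $\zzz_{p}$-isotropy joining two boundary fixed components, a pair of $2$-dimensional fixed components joined by an arc, etc.). Each elementary block is equivariantly diffeomorphic to one of the standard linear $T^{1}$-models on $S^{4}$, $\ccc P^{2}$, $\overline{\ccc P^{2}}$, or $S^{2}\times S^{2}$, and gluing along invariant $S^{3}$'s is by definition the equivariant connected sum, yielding the claimed decomposition.

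The main obstacle, and where the deepest external input is needed, is the cutting step combined with the enumeration of elementary blocks: one has to verify that the list of standard models above is exhaustive for every possible germ of weighted-orbit-space data, and that each cut can be performed smoothly and equivariantly so that the reassembly really is an iterated equivariant connected sum. This is the hard geometric content of Fintushel's smooth refinement of Pao's work, while Perelman's theorem is precisely what upgrades ``$M^{*}$ is a homotopy $3$-cell'' to ``$M^{*}\cong D^{3}$'', eliminating any exotic summands from the final decomposition.
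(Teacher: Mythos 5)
Your three-source framing (Fintushel for the orbit-data classification, Pao for the equivariant decomposition into standard pieces, Perelman to kill the fake-cell ambiguity) is exactly what the paper cites, so the overall strategy matches. However, your second step contains a concrete error. You claim that simple-connectedness of $M^{4}$ together with the presence of a fixed or isotropy stratum forces $\partial M^{*}\neq\emptyset$ and hence $M^{*}\cong D^{3}$. This inference is false: $\partial M^{*}$ consists precisely of the images of the \emph{two-dimensional} fixed components, so if $\Fix(M^{4};T^{1})$ consists only of isolated points (possibly together with orbits of finite isotropy), then $\partial M^{*}=\emptyset$ and $M^{*}$ is a closed simply-connected $3$-manifold, i.e.\ $S^{3}$ by Perelman. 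This genuinely occurs, e.g.\ for the linear action on $S^{4}\subset \ccc^{2}\times\rrr$ rotating the $\ccc^{2}$ factor (two isolated fixed points, $M^{*}\cong S^{3}$) or for a weighted linear circle action on $\ccc P^{2}$ with three isolated fixed points. The free-case exclusion in your parenthetical addresses a different issue (empty singular set), not empty boundary.

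Moreover, even when $\partial M^{*}\neq\emptyset$ it need not be connected: rotation of the first factor of $S^{2}\times S^{2}$ gives $M^{*}\cong S^{2}\times I$, which has two boundary spheres and is not $D^{3}$. The correct statement, which follows from Bredon's theorem~\ref{t:Bredon} plus Perelman (and which the paper records in its discussion of that theorem), is that $M^{*}$ is homeomorphic to $S^{3}$ with finitely many disjoint open $3$-balls removed, i.e.\ one of $S^{3}$, $D^{3}$, $S^{2}\times I$, \ldots\ . Pao's and Fintushel's cutting arguments handle all of these, but your write-up chooses properly embedded $2$-disks in $D^{3}$ and would need to be reformulated in terms of separating $2$-spheres inside a general punctured $3$-sphere. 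A small further inaccuracy: the $\zzz_{p}$-isotropy arcs lie in the interior of $M^{*}$, not in $\partial M^{*}$; only their endpoints may land on $\partial M^{*}$ at two-dimensional fixed components.
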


Let $M^4$ be a closed, simply-connected, non-negatively curved $4$-manifold and let $\chi(M^4)$ be its Euler characteristic. It follows from work done independently by Kleiner \cite{K} and Searle and Yang \cite{SY94} that $2\leq \chi(M^4)\leq 4$. Combining this with theorem~\ref{T:FPP} yields the following result in the case of non-negative curvature (cf. \cite{GG}).


\begin{theorem}\label{T:sy1}  A closed, simply-connected, non-negatively curved $4$-manifold with an isometric  $T^1$ action is diffeomorphic to $S^4, \ccc P^2, S^2\times S^2$ or $\ccc P^2\#\pm \ccc P^2$.
\end{theorem}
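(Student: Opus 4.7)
The plan is to combine the two results stated immediately above the theorem. By Theorem~\ref{T:FPP}, any closed, simply-connected smooth $4$-manifold with a smooth $T^1$ action is equivariantly diffeomorphic to a connected sum
\[
a \ccc P^2 \# b \overline{\ccc P^2} \# c (S^2 \times S^2)
\]
with $a, b, c \geq 0$ (interpreted as $S^4$ when $a=b=c=0$, since $S^4$ summands are absorbed up to diffeomorphism). This reduces the smooth classification problem to deciding which entries of this list can actually carry a non-negatively curved metric invariant under a $T^1$ action.

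To cut down the list I would invoke the Euler characteristic bound $2 \leq \chi(M^4) \leq 4$ recorded in the paragraph just above the theorem, due independently to Kleiner and to Searle--Yang. Using $\chi(A \# B) = \chi(A) + \chi(B) - 2$, one has $\chi(S^4)=2$, $\chi(\ccc P^2) = 3$, $\chi(S^2 \times S^2) = 4$, and $\chi(\ccc P^2 \# \pm \ccc P^2) = 4$, while every other nontrivial connected sum in the Fintushel--Pao list (e.g.\ three copies of $\pm\ccc P^2$, or $\ccc P^2 \# (S^2 \times S^2)$) forces $\chi \geq 5$. Enforcing $\chi(M^4) \leq 4$ therefore leaves precisely the four candidates $S^4$, $\ccc P^2$, $S^2 \times S^2$, and $\ccc P^2 \# \pm \ccc P^2$ appearing in the statement.

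I do not expect a substantive obstacle here: the two deep ingredients (Fintushel--Pao combined with Perelman, and the Kleiner/Searle--Yang Euler characteristic estimate) do all the heavy lifting, and the concluding step is a short enumeration by Euler characteristic. The only item deserving care is that Theorem~\ref{T:FPP} be cited in the smooth category---which is precisely the role played by Perelman's resolution of the $3$-dimensional Poincar\'e conjecture in its proof---so that the output here is genuinely a diffeomorphism, and not merely a homeomorphism, classification.
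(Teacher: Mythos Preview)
Your proposal is correct and follows exactly the route the paper takes: combine Theorem~\ref{T:FPP} with the Kleiner/Searle--Yang bound $2\leq \chi(M^4)\leq 4$, then enumerate the surviving connected sums by Euler characteristic. The paper in fact gives no proof beyond the sentence ``Combining this with theorem~\ref{T:FPP} yields the following result'', so your write-up is slightly more detailed than what appears there.
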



\section{Cohomogeneity three torus actions on  simply-connected $5$-manifolds}

Here we gather general facts about smooth cohomogeneity three torus actions $T^n\times M^{n+3}\rightarrow M^{n+3}$ on simply-connected, smooth manifolds and then consider the specific case when $M$ is $5$-dimensional. 
The main goal of this section is to understand the structure of the singular sets, that is, the set of points  in the orbit space $M^*$ corresponding to orbits with non-principal isotropy groups.


\subsection{General considerations}

We begin with the following theorem from Bredon \cite{Br}, which characterizes the orbit space of a cohomogeneity three action.


\begin{theorem}\label{t:Bredon} Let $G$ be a compact Lie group acting by cohomogeneity three on $M$, a compact, simply-connected smooth manifold. If all orbits are connected, then $M^*$ is a simply-connected topological $3$-manifold with or without boundary.
\end{theorem}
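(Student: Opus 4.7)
The plan is to establish the two assertions separately: that $M^*$ carries the structure of a topological $3$-manifold (possibly with boundary), and that it is simply-connected.

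For the manifold structure, I would work locally using the slice theorem. Around each orbit $G\cdot p$ a $G$-invariant tubular neighborhood is equivariantly diffeomorphic to $G\times_{G_p}V_p$, where $V_p$ is the orthogonal slice representation of $G_p$; hence a neighborhood of $p^*$ in $M^*$ is homeomorphic to $V_p/G_p$. Since the action has cohomogeneity three, $\dim V_p/G_p=3$, and as $V_p$ is a linear representation, $V_p/G_p$ is the open cone on $S(V_p)/G_p$. Showing $M^*$ is locally a topological $3$-manifold thus reduces to showing that $S(V_p)/G_p$ is homeomorphic either to $S^2$ (interior point case) or to the $2$-disc $D^2$ (boundary point case). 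The hypothesis that all $G$-orbits in $M$ are connected passes to every isotropy subgroup occurring along the slice, so the $G_p$-action on $V_p$ has only connected orbits. I would combine this connectedness restriction with the Montgomery--Samelson/Borel classification of transitive compact Lie group actions on spheres, applied to the orbit stratification of $G_p$ on $S(V_p)$, to enumerate the admissible slice representations and verify that only the two desired cases arise.

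For simple connectivity, the orbit map $\pi:M\to M^*$ has connected fibres, and I would appeal to the standard lifting principle for compact transformation groups (cf.\ \cite{Br}) which asserts that when the orbits are connected, $\pi_*:\pi_1(M)\to\pi_1(M^*)$ is surjective. The argument uses local triviality of $\pi$ over the principal stratum combined with the slice theorem along the singular strata to lift a loop in $M^*$ to a loop in $M$; connectedness of orbits enters precisely at the slice step, ensuring that a small loop in $V_p/G_p$ lifts to a loop in $V_p$. Since $\pi_1(M)=1$ by hypothesis, we conclude $\pi_1(M^*)=1$.

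The main obstacle is the local analysis underlying the manifold assertion. One must check, via case analysis of slice representations admitting only connected orbits, that the cone on $S(V_p)/G_p$ is always Euclidean or half-Euclidean rather than some more exotic three-dimensional cone. In higher cohomogeneity this conclusion fails — orbit spaces need not be manifolds — and the argument makes essential use of the fact that we are in cohomogeneity three, where the dimensional count allows a finite enumeration of admissible slice types. The simple-connectivity step, by contrast, is a general and essentially formal consequence of orbits being connected once the lifting machinery is in place.
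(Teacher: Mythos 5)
The paper does not prove this statement; it is quoted directly from Bredon \cite{Br}, so there is no internal proof to compare against. Judged on its own terms, your outline is a reasonable start, but it contains a gap at the crucial step.

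The step that fails is the assertion that ``the hypothesis that all $G$-orbits in $M$ are connected passes to \dots the slice, so the $G_p$-action on $V_p$ has only connected orbits.'' This implication is false. If $G$ is connected (which one may assume after replacing $G$ by its identity component, since that changes no orbit), then \emph{every} $G$-orbit is automatically connected, regardless of what $G_p$ does on the slice. Yet $G_p$ can perfectly well be disconnected — indeed finite — and then the $G_p$-orbits in $V_p$ are typically disconnected. A concrete example: $G=T^1$ acting on $S^4\subset\mathbb{C}^2\times\mathbb{R}$ by $(z_1,z_2,t)\mapsto(e^{i\theta}z_1,e^{2i\theta}z_2,t)$; all $G$-orbits are connected, but at a point with $G_p=\mathbb{Z}_2$ the slice orbits $\{v,-v\}$ are two-point sets. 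The dangerous configuration you must exclude is precisely of this disconnected type — $G_p$ finite containing an involution acting by $-\mathrm{Id}$ on a $3$-dimensional slice, giving $S(V_p)/G_p\cong\mathbb{R}P^2$ and hence a non-manifold point of $M^*$ — and your connectedness claim is exactly what would have dispatched it, so its failure is fatal to the argument as written. What actually rules this case out is orientability: $M$ simply-connected forces $M$ orientable, a connected $G$ preserves orientation, $\mathrm{Ad}_G$ has determinant one on $\mathfrak{g}$, and hence every $g\in G_p$ has $\det(g|_{V_p})=1$ when $G_p$ is finite — incompatible with $-\mathrm{Id}$ on an odd-dimensional slice. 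Elsewhere in the paper (e.g.\ Lemma~\ref{l:nofreeoralmostfree}) the $\mathbb{R}P^2$ possibility is ruled out \emph{by appeal to} Theorem~\ref{t:Bredon}, so the theorem itself must carry this content; your proof cannot secretly use its conclusion.

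A secondary issue: you invoke the Montgomery--Samelson/Borel classification of \emph{transitive} actions on spheres, but $G_p$ acts on $S(V_p)$ with cohomogeneity two, not transitively, so that classification is not the right tool. What one would need is an analysis of cohomogeneity-two linear actions on spheres (or, equivalently, the argument Bredon makes using the absence of special exceptional orbits — his version of Theorem~\ref{T:Bredon_NSE} — together with Smith-theoretic and orientability constraints). The simple-connectivity half of your outline is fine: it is essentially Theorem~\ref{t:Bredonpi1} of the paper, also due to Bredon, and your lifting sketch is a correct description of that argument.
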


It follows from the resolution of the Poincar\'e conjecture (cf. \cite{P1,P2,P3}) that $M^*$ is homeomorphic to one of $S^3$, $D^3$, $S^2\times I$ or, more generally, to $S^3$
 with a finite number of disjoint open $3$-balls removed.  
We will see in the next section that non-negative curvature implies that 
$M^*$ can only be one of the first three manifolds from this list.

We also recall the following general result of Bredon \cite{Br} about the fundamental group of the orbit space:

\begin{theorem}\label{t:Bredonpi1} Let  $G$ be a compact Lie group acting on a topological space $X$. If either $G$ is connected or $G$ has a nonempty fixed point set, then the orbit projection map $\pi: X\rightarrow X/G$ induces an onto map  on fundamental groups.
\end{theorem}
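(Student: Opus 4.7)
The plan is to show that every loop in $X/G$ lifts, up to homotopy, to a loop in $X$. The key ingredient is the existence of local continuous sections of the orbit map $\pi : X \to X/G$, which follows from the slice/tube theorem for compact Lie group actions (in the smooth setting of the paper this is immediate, and for general topological $X$ one requires $X$ to be sufficiently regular, e.g.\ completely regular). So one first covers $X/G$ by open sets $V^*_\alpha$ each admitting a continuous section $s_\alpha : V^*_\alpha \to X$ of $\pi$.

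Given a loop $\gamma^* : [0,1] \to X/G$ based at $\bar x_0$, compactness lets us subdivide $0 = t_0 < t_1 < \cdots < t_n = 1$ so that each $\gamma^*([t_{i-1}, t_i])$ lies in some $V^*_i$. The partial lifts $s_i \circ \gamma^*|_{[t_{i-1},t_i]}$ need not agree at partition points, but since the two candidate endpoints over $\gamma^*(t_i)$ project to the same point, they lie in a single $G$-orbit and differ by some $g_i \in G$. Cumulatively left-translating the subsequent segments by the appropriate group elements glues the partial lifts into a continuous lift $\tilde\gamma : [0,1] \to X$ of $\gamma^*$; by choosing the first section so that $s_1(\bar x_0) = x_0$, one arranges $\tilde\gamma(0) = x_0$, and then $\tilde\gamma(1)$ lies in $\pi^{-1}(\bar x_0) = G\cdot x_0$.

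It remains to close $\tilde\gamma$ into a loop at $x_0$ in a way that does not change the homotopy class of its projection. If $G$ is connected, choose $h \in G$ with $h\cdot x_0 = \tilde\gamma(1)$ and a path from the identity to $h$ in $G$; acting on $x_0$ gives a path in the orbit from $x_0$ to $\tilde\gamma(1)$ whose projection is the constant loop $\bar x_0$. Concatenating $\tilde\gamma$ with the reverse of this orbit-path yields a loop in $X$ at $x_0$ whose projection is $\gamma^*$ followed by a constant path, hence homotopic to $\gamma^*$. If instead $G$ fixes some point $p$, take $x_0 = p$: then $\pi^{-1}(\bar p) = \{p\}$, so $\tilde\gamma(1) = p = \tilde\gamma(0)$ and $\tilde\gamma$ itself is already a loop at $p$ lifting $\gamma^*$ exactly.

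The main conceptual obstacle is the closure step: after gluing the partial lifts into a path, one must match the two endpoints in $\pi^{-1}(\bar x_0)$ without disturbing the homotopy class downstairs, and the hypotheses of connectedness of $G$ and of a fixed point each achieve this in a different way (via a connecting path in $G$, respectively by collapsing the fiber over $\bar x_0$ to a point). This is why the theorem naturally splits into these two cases. The technical prerequisite, the existence of local sections of $\pi$, is standard in the smooth setting used throughout the paper.
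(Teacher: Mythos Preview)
The paper does not supply its own proof of this theorem; it is quoted directly from Bredon's textbook \cite{Br} as background. Your argument is correct and is essentially the standard one (cf.\ Bredon, Chapter~II, Theorem~6.2 and Corollary~6.3): use local cross-sections furnished by the slice theorem to lift a loop in $X/G$ piecewise, translate successive pieces by suitable group elements so that they match at the partition points, and then close up the resulting path in $X$ either via a path in $G$ (when $G$ is connected) or automatically by basing at a fixed point. Your remark about needing $X$ to be sufficiently regular is apt: Bredon works throughout with completely regular Hausdorff spaces, which is exactly the hypothesis required for the slice theorem and hence for local sections of $\pi$ to exist; in the smooth setting of the present paper this is of course automatic.
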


The next theorem, again from Bredon~\cite{Br}, implies the absence of special exceptional orbits and, in particular, allows us to conclude that no fixed point set of finite $\zzz_2$-isotropy has 
codimension one in $M$. This result will be used in the proof of lemma \ref{l:nofreeoralmostfree}.


\begin{theorem}\label{T:Bredon_NSE}  Let $M$ be a smooth, simply-connected manifold admitting an action by a compact Lie group. If a principal orbit is connected (and hence all orbits are connected) then there are no special exceptional orbits, that is, the dimension of the set of points belonging to exceptional orbits is of codimension greater than or equal to $2$.
\end{theorem}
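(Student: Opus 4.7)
I would prove this by contradiction, using an orientation computation at a single point of any hypothetical codimension-$1$ exceptional orbit. Suppose that some component $N$ of the exceptional orbit set has codimension $1$ in $M$. Fix $x\in N$, let $K=G_x$, and let $H$ denote the principal isotropy; then $H\subsetneq K$ and $\dim(G/K)=\dim(G/H)$, so by the slice theorem the slice representation of $K$ at $x$ is $1$-dimensional, and the action of $K$ on the slice $V\cong\mathbb R$ must factor through a free action of $K/H$ on $V\setminus\{0\}$. The only finite group acting freely on $\mathbb R\setminus\{0\}$ is $\mathbb Z/2$ acting by $v\mapsto -v$, so $K/H\cong \mathbb Z/2$ and any $\sigma\in K\setminus H$ acts on $V$ by $-1$.

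The idea is then to pick such a $\sigma$ and show that, as a diffeomorphism of $M$, it reverses orientation, contradicting the fact that elements of the identity component of $G$ are isotopic to $\mathrm{id}_M$ through the action and hence preserve orientation (recall that $M$ is connected and orientable, both consequences of simple-connectedness). At $x$, the decomposition $T_xM=T_x(G\cdot x)\oplus V$ is $K$-equivariant; $d\sigma_x$ acts on $V$ by $-1$ and on $T_x(G\cdot x)\cong \mathfrak g/\mathfrak k$ by the isotropy representation $\mathrm{Ad}(\sigma)\vert_{\mathfrak g/\mathfrak k}$. In the setting directly relevant to this paper, where $G=T^n$ is connected and abelian, $\mathrm{Ad}$ is trivial, so this gives $\det(d\sigma_x)=-1$ at once, and the path from $e$ to $\sigma$ in the connected group $G$ yields the required isotopy from $\mathrm{id}_M$ to $\sigma\colon M\to M$, producing the contradiction.

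I expect the main obstacle to extending this clean argument to an arbitrary compact $G$ (as the theorem is stated) to be the possibility that $\det\mathrm{Ad}(\sigma)\vert_{\mathfrak g/\mathfrak k}=-1$ for every $\sigma\in K\setminus H$, in which case the slice contribution of $-1$ is cancelled and the orientation count is inconclusive; one would then either replace $\sigma$ by $\sigma h$ with a suitable $h\in H$ to restore the sign, or fall back on Bredon's branched double-cover argument along $N$. For disconnected $G$, the hypothesis that principal orbits are connected is equivalent to $G=G^0 H$, which permits one to reduce the problem to the $G^0$-action on a component of $N$ and check that the special-exceptional structure persists. The torus case used throughout the paper bypasses both of these subtleties, so the orientation argument above is sufficient for the applications at hand.
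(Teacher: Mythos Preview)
The paper does not prove this statement; it is quoted as a result of Bredon \cite{Br} (Chapter~IV). So there is no in-paper proof to compare against, only Bredon's, which proceeds through the structure of the orbit space and a covering-type obstruction and handles arbitrary compact $G$ uniformly.

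Your orientation argument is a clean route for the abelian case the paper actually uses, modulo one slip. From ``$N$ has codimension~$1$'' you deduce that the slice $V$ at $x\in N$ is $1$-dimensional. This does not follow: the slice has dimension $\dim M-\dim(G/K)$, which equals $1$ only for cohomogeneity-one actions, whereas in the paper's own setting ($T^2$ on $M^5$) the slice at an exceptional orbit is $3$-dimensional. The correct version: choose $x$ in a top orbit-type stratum $M_{(K)}$ of the exceptional set, so that in the tube one has $M_{(K)}=(G/K)\times V^{K}$; then codimension~$1$ forces $\dim V^{K}=\dim V-1$. Writing $V=V^{K}\oplus W$ with $\dim W=1$, one checks $H=\ker(K\to O(W))$, so $K/H\cong\mathbb{Z}/2$ and any $\sigma\in K\setminus H$ acts by $+1$ on $V^{K}$ and by $-1$ on $W$. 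Thus $\det(d\sigma_x|_V)=-1$ and your determinant computation on $T_xM=T_x(G\cdot x)\oplus V$ goes through unchanged. With this correction the torus case is complete, and your remarks on the obstacles for general compact $G$ (the possible sign from $\mathrm{Ad}(\sigma)$ on $\mathfrak g/\mathfrak k$, and the reduction for disconnected $G$ via $G=G^0H$) are accurate.
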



\begin{lemma}\label{l:nofreeoralmostfree} Let $T^n$ act on $M^{n+3}$, a closed, simply-connected smooth manifold. Then some circle subgroup has non-trivial fixed point set.
\end{lemma}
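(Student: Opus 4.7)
The plan is to proceed by induction on $n$. For the base case $n=1$, a straightforward Euler-characteristic argument works: if $S^{1}$ acted on a closed simply-connected $M^{4}$ without fixed points, then $\chi(M) = \chi(\Fix(M;S^{1})) = 0$, contradicting the identity $\chi(M) = 2 + b_{2}(M) \geq 2$ valid for every closed simply-connected $4$-manifold.

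For the inductive step I would assume the lemma for $T^{n-1}$-actions and argue by contradiction. If no circle subgroup of $T^{n}$ has nonempty fixed point set in $M^{n+3}$, then every isotropy group $T^{n}_{x}$ is finite, so the action is almost free. Since $M$ is compact there are only finitely many distinct isotropy subgroups; their union is a finite subset $F \subset T^{n}$. For $n \geq 2$ the family of circle subgroups of $T^{n}$ is positive-dimensional, so a generic circle $S^{1} \subset T^{n}$ avoids $F\setminus\{1\}$ and therefore acts freely on $M$. The quotient $N := M/S^{1}$ is then a closed smooth $(n+2)$-manifold, the projection $M \to N$ is a principal $S^{1}$-bundle, and from its long exact homotopy sequence $\pi_{1}(N) = 0$. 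The induced action of $T^{n-1} = T^{n}/S^{1}$ on $N$ has isotropy at each $[x]\in N$ equal to the image of $T^{n}_{x}$ in $T^{n-1}$, which is finite. Hence $T^{n-1}$ acts almost freely on $N^{(n-1)+3}$ with no circle subgroup having a fixed point, contradicting the inductive hypothesis.

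The main technical point will be justifying the existence of the freely-acting circle $S^{1} \subset T^{n}$: for any finite set $F \subset T^{n}$ and $n \geq 2$ the circle subgroups meeting $F \setminus \{1\}$ form a proper (lower-dimensional) subset of the $(n-1)$-dimensional family of all circle subgroups of $T^{n}$, so one can be chosen in the complement. If one wishes to bypass this counting argument entirely, an alternative cohomological route is available: in the almost-free case $M/T^{n}$ is a closed simply-connected topological $3$-manifold without boundary---the absence of boundary follows because any finite subgroup of $T^{n}$ acts orientation-preservingly on the $3$-dimensional normal slice, and there are no special exceptional orbits by the Bredon-type result quoted as Theorem~\ref{T:Bredon_NSE}---so $M/T^{n} \simeq S^{3}$ by Perelman's theorem, and the Serre spectral sequence of the rational fibration $T^{n} \to M \to S^{3}$ associated to an almost-free torus action forces $H^{1}(M;\mathbb{Q}) \supseteq \mathbb{Q}^{n}$, contradicting $\pi_{1}(M) = 0$.
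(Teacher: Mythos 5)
Your inductive framework and $n=1$ base case are fine, but the crucial step of the induction---producing a circle subgroup $S^1\subset T^n$ that misses the finite set $F\setminus\{1\}$ of nontrivial isotropy elements---is unjustified and can genuinely fail. Circle subgroups of $T^n=\rrr^n/\zzz^n$ are in bijection with primitive integer vectors up to sign; they form a countable \emph{discrete} family, not a positive-dimensional one as you assert. Worse, there are configurations of isotropy that every circle subgroup must meet: every circle in $T^2$ contains exactly one element of order~$2$, and that element is one of the three nontrivial involutions of $T^2$, so if all three involutions occur as isotropy---as happens whenever some orbit has isotropy $\zzz_2\times\zzz_2\subset SO(3)$, a possibility the paper explicitly allows in Proposition~\ref{p:3dimfixed}---then \emph{no} circle subgroup of $T^2$ acts freely on $M$. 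Since you are arguing by contradiction, you cannot presume this does not happen, so the primary inductive argument has a real gap.

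Your alternative cohomological route does repair this and is a genuinely different argument from the paper's. You correctly note that when $T^n$ acts almost freely the finite isotropy group at each point preserves orientation of the $3$-dimensional normal slice (it acts trivially on the tangent to the orbit and preserves the orientation of $T_xM$ because $T^n$ is connected and $M$ is orientable), so the space of normal directions in $M^*=M/T^n$ is always a topological $S^2$ and $M^*$ is a closed, simply-connected topological $3$-manifold; Poincar\'e duality then already gives $H^*(M^*;\qqq)\cong H^*(S^3;\qqq)$, so Perelman's theorem is not even needed here. The rational Leray spectral sequence of the orbit map has $E_2^{p,q}=H^p(M^*;\qqq)\otimes\Lambda^q\qqq^n$ (the local system is trivial since $M^*$ is simply-connected), and $E_2^{2,0}=0$ kills the only possible differential out of $E_2^{0,1}\cong\qqq^n$, forcing $H^1(M;\qqq)\neq 0$ and contradicting $\pi_1(M)=0$. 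The paper takes yet a third route: it first observes that if $T^n$ acted freely then $M/T^{n-1}$ would be a closed, simply-connected $4$-manifold with a free circle action, which is impossible; and in the almost-free case it passes to $M/\Gamma$, where $\Gamma$ is the finite group generated by all the isotropy subgroups, shows $M/\Gamma$ is a closed, simply-connected topological manifold, and then notes that $T^n/\Gamma$ acts freely on it, reducing to the first case. Your spectral-sequence argument is arguably slicker but leans on the rational Leray machinery for orbit maps; the paper's is more elementary once the manifold structure of $M/\Gamma$ is established.
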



\begin{proof}
 If all circle subgroups were to act freely, this would imply a free circle action on a closed, simply-connected $4$-manifold $M^4=M^{n+3}/T^{n-1}$, which is impossible. Likewise, if the action is almost free, then there are finitely many finite isotropy groups.
Let $\Gamma$ be the finite group generated by all these finite groups and consider the action of $T^n/\Gamma$ on $M^{n+3}/\Gamma$, a closed, simply-connected topological space by theorem \ref{t:Bredonpi1}. We claim that $M^{n+3}/\Gamma$ must be a topological manifold. Note that the fixed point set of any subgroup of finite isotropy must be at least $n$-dimensional since it is invariant under the $T^n$ action and it will be at most $(n+1)$-dimensional because there are no special exceptional orbits by theorem  \ref{T:Bredon_NSE}.  
The  space of directions to the projection of these fixed point sets in $M^{n+3}/\Gamma$ will be either topological $2$-spheres or circles in all but one case, that is, where $\zzz_2$ fixes an $n$-dimensional submanifold and acts freely on its normal $S^2$. 
However, this latter case is impossible, because such a fixed point set would project to a point in $M^{n+3}/T^n$ with space of directions $\rrr P^2$ and, by theorem \ref{t:Bredon}, this is a contradiction. Hence  $M^{n+3}/\Gamma$ must be a closed, simply-connected topological manifold. Now, $T^n/\Gamma$ must act freely on $M^{n+3}/\Gamma$ and we have just seen that this is impossible. Therefore $T^n$ cannot act almost freely on  $M^{n+3}$ either.
 \end{proof}

Let $M^{n+3}$ be a closed, simply-connected $(n+3)$-manifold with a cohomogeneity three $T^n$ action. By the previous lemma, there is a circle subgroup $T^1 \subset T^n$ with non-trivial fixed point set. In the case where $M^*=D^3$, there is a unique codimension $2$ fixed point set component. In general, when $M^*$ is homeomorphic to $S^3$ with $k$  
disjoint open $3$-balls removed, $k\geq 1$, there will be $k$ codimension $2$ components of the fixed point sets of possibly different circles, one for each boundary component of $M^*$.

 In the case where $M^*$ is homeomorphic to $S^3$, the components of $\Fix(M^{n+3}; T^1)$ are of codimension greater than or equal to $4$. In this case, the following proposition, generalizing a result of Rong's in dimension $5$ (cf. \cite{R}), shows there must be a minumum number of codimension $4$ fixed point set components, corresponding to isolated singular orbits $T^{n-1}$.


\begin{proposition}\label{p:Tn-1orbits}
Let $T^n$ act on $M^{n+3}$, a simply-connected, smooth manifold. Suppose that $M^*$ is homeomorphic to $S^3$ and that there are exactly two orbit types: principal orbits $T^n$ and isolated singular orbits $T^{n-1}$. Then there are at least $n+1$ isolated singular orbits $T^{n-1}$.
\end{proposition}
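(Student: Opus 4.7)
The plan is to compute $\pi_1(M)$ explicitly in terms of the isotropy data at the $k$ singular orbits and then use simple-connectedness to force $k\geq n+1$. Let $v_1,\dots,v_k\in\mathbb{Z}^n\cong\pi_1(T^n)$ be the primitive vectors spanning the isotropy circles $S^1_i\subset T^n$ at the $k$ singular orbits.

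The first step is to pin down the local model at each isolated singular orbit. Because $M^*=S^3$ is locally Euclidean at each such point and the only admitted nontrivial isotropy is $S^1_i$, the slice representation of $S^1_i$ on its normal disk $\mathbb{C}^2$ must be the Hopf action (weights $(\pm 1,\pm 1)$). Consequently an invariant tubular neighborhood has the form $U_i\cong T^n\times_{S^1_i}\mathbb{C}^2$, which is homotopy equivalent to the singular orbit $T^{n-1}$.

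The second step is to compute $\pi_1$ of the principal part $M_0\subset M$, which is a principal $T^n$-bundle over $M_0^*:=S^3\setminus\{k\text{ points}\}$. Since $M_0^*\simeq\bigvee^{k-1} S^2$, it is simply-connected with $\pi_2(M_0^*)\cong H_2(M_0^*)\cong\mathbb{Z}^{k-1}$, generated by small $2$-spheres $[\Sigma_i]$ around the punctures subject to $\sum_i[\Sigma_i]=0$. Identifying the connecting map $\partial[\Sigma_i]\in\pi_1(T^n)$ with the Euler class $v_i$ of the $T^n$-bundle on $\Sigma_i$ (read off from the Hopf slice model $T^n\times_{S^1_i}S^3\to S^3/S^1_i$), the homotopy exact sequence of $T^n\to M_0\to M_0^*$ yields
\[
\pi_1(M_0)\;\cong\;\mathbb{Z}^n\big/\langle v_1,\dots,v_k\rangle,
\]
with the induced relation $\sum_i v_i=0$. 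Applying Seifert--van Kampen to the open cover $M=M_0\cup\bigcup_i U_i$, and using that both $U_i$ and $U_i\cap M_0\simeq T^n\times_{S^1_i}S^3$ have $\pi_1\cong\mathbb{Z}^{n-1}$ with the inclusion a $\pi_1$-isomorphism, I conclude $\pi_1(M)\cong\pi_1(M_0)$.

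Simple-connectedness of $M$ then forces $v_1,\dots,v_k$ to generate $\mathbb{Z}^n$; combined with the relation $\sum v_i=0$, already $v_1,\dots,v_{k-1}$ must generate $\mathbb{Z}^n$, forcing $k-1\geq n$ and hence $k\geq n+1$. The main obstacle I anticipate is pinning down the identification $\partial[\Sigma_i]=v_i$ with correct orientation conventions, so that the tautological relation $\sum_i[\Sigma_i]=0$ in $\pi_2(M_0^*)$ descends to the sharp relation $\sum v_i=0$ in $\mathbb{Z}^n$ rather than a twisted variant; this amounts to unpacking the clutching function of the associated principal bundle $T^n\times_{S^1_i}S^3\to S^2$ coming from the Hopf fibration $S^1_i\to S^3\to S^2$.
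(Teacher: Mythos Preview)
Your argument is correct and follows essentially the same route as the paper: both remove the singular orbits, use the homotopy exact sequence of the principal $T^n$-fibration $M_0\to M_0^*$ over $S^3$ minus $k$ points, compute $\pi_2(M_0^*)\cong\mathbb{Z}^{k-1}$, and deduce from $\pi_1(M_0)=\pi_1(M)=1$ that the connecting map $\mathbb{Z}^{k-1}\to\pi_1(T^n)\cong\mathbb{Z}^n$ is onto, whence $k-1\geq n$. The paper obtains $\pi_1(M_0)=\pi_1(M)$ directly by transversality (the singular orbits have codimension $4$), whereas you recover the same identification via the local Hopf model and Seifert--van Kampen; your explicit tracking of $\partial[\Sigma_i]=v_i$ and the relation $\sum v_i=0$ is more informative but not needed for the bare inequality.
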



\begin{proof}
Let $M_0$ denote the manifold with boundary obtained by removing a small tubular neighborhood around each isolated singular orbit $T^{n-1}$. Let $M^*_0$ denote the quotient space $M_0/T^2$. By a standard transversality argument we know that
\bdm
\pi_1(M_0)\cong \pi_1(M) = \{1\}
\edm
and  
\bdm
\pi_2(M_0)\cong \pi_2(M).
\edm

 Since there is no isotropy group of finite order we obtain a fibration 
 $$
 T^n\rightarrow M_0\rightarrow M^*_0,
 $$
 and therefore a long exact sequence in homotopy:
\bdm
0\rightarrow\pi_2(M_0)\rightarrow\pi_2(M^*_0)\rightarrow\pi_1(T^n)\rightarrow\pi_1(M_0)\rightarrow\pi_1(M^*_0)\rightarrow 0. 
\edm
Since $\pi_1(M)\cong \pi_1(M_0)=0$, it follows that $\pi_1(M^*_0)=0$. Since $M^*$ is a 3-sphere, applying the Mayer-Vietoris sequence to the pair 
$(M^*_0, \textrm{cl}(M^* \backslash M^*_0))$, we obtain that $H_2(M^*_0)\cong \zzz^r$, where $(r+1)$ is the number of isolated singular orbits. It follows from the Hurewicz isomorphism  that $\pi_2(M^*_0) \cong H_2(M^*_0) \cong \zzz^r$ and the above exact sequence in homotopy becomes
\bdm
0\rightarrow\pi_2(M_0)\rightarrow\zzz^r\rightarrow\zzz^n\rightarrow 0.
\edm
We conclude that $n\leq r$ and thus there are at least $n+1$ isolated singular orbits.

\end{proof}


\begin{corollary}\label{c:finiteisotropy}
Proposition~\ref{p:Tn-1orbits} remains valid in the presence of 
finite isotropy.
\end{corollary}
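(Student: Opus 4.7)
The plan is to reduce to Proposition~\ref{p:Tn-1orbits} by quotienting out the finite isotropies. Let $\Gamma \subset T^n$ be the finite subgroup generated by all the finite isotropy subgroups of the action; this is well-defined and finite because $M^{n+3}$ is compact and hence admits only finitely many isotropy types.

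First, following the local analysis used in the proof of Lemma~\ref{l:nofreeoralmostfree}, I would verify that $M^{n+3}/\Gamma$ is a closed, simply-connected topological manifold. The manifold property reduces to checking that at every point the link, namely the quotient of a normal sphere by a finite abelian group, is itself a sphere; Theorem~\ref{T:Bredon_NSE} (no special exceptional orbits) rules out the only problematic case, in which a $\zzz_2$-isotropy would act antipodally on a normal $S^2$. For simple connectivity I would invoke Armstrong's theorem: since $\Gamma$ is abelian and is generated by elements with non-empty fixed point set, $\pi_1(M^{n+3}/\Gamma) = 1$.

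Next, I would observe that the induced action of $T^n/\Gamma \cong T^n$ on $M^{n+3}/\Gamma$ has exactly two orbit types, namely principal $T^n$ and isolated singular $T^{n-1}$, since by construction all finite isotropy has been absorbed. Because $\Gamma \subset T^n$, the orbit space $(M^{n+3}/\Gamma)/(T^n/\Gamma)$ coincides with $M^{n+3}/T^n$ and is therefore homeomorphic to $S^3$. A direct stabilizer computation shows that each isolated singular orbit $T^{n-1}$ in $M^{n+3}$, with isotropy circle $T^1$, descends to an isolated singular orbit in $M^{n+3}/\Gamma$ with isotropy $T^1/(T^1 \cap \Gamma)$, and that distinct singular orbits stay distinct, so the count is preserved.

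Finally, I would apply Proposition~\ref{p:Tn-1orbits} to $M^{n+3}/\Gamma$. The proof of that proposition relies only on general topological tools---a codimension-$4$ transversality argument to compute $\pi_1$ and $\pi_2$ after deleting tubular neighborhoods of the isolated singular orbits, the homotopy long exact sequence of the locally trivial $T^n$-bundle over the singular-orbit complement, a Mayer--Vietoris computation on $S^3$, and the Hurewicz isomorphism---all of which remain valid when the ambient space is only a topological manifold. This yields at least $n+1$ isolated singular orbits in $M^{n+3}/\Gamma$, and hence in $M^{n+3}$. The main obstacle is the local verification that $M^{n+3}/\Gamma$ is a manifold in the presence of several interacting finite isotropy subgroups, but this is resolved by exactly the case analysis used in Lemma~\ref{l:nofreeoralmostfree}.
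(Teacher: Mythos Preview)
Your proposal is correct and follows essentially the same route as the paper: quotient by the finite group $\Gamma$ generated by all finite isotropy, verify that $M^{n+3}/\Gamma$ is a closed simply-connected topological manifold (via the local analysis from Lemma~\ref{l:nofreeoralmostfree}), observe that the induced $T^n/\Gamma$-action has no finite isotropy and the same orbit space $S^3$, and then apply Proposition~\ref{p:Tn-1orbits}. The only cosmetic difference is that for simple connectivity you invoke Armstrong's theorem, whereas the paper appeals to Theorem~\ref{t:Bredonpi1}; both yield the same conclusion here.
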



\begin{proof} 

Let $\Gamma$ be the finite group generated by the finite isotropy groups of the action. As we saw earlier in the proof of lemma \ref{l:nofreeoralmostfree}, $M^n/\Gamma$ is a closed, topological manifold. Moreover,  $M^n/\Gamma$ is simply-connected by theorem \ref{t:Bredonpi1}.  Finally, observe that $T^n/\Gamma$ acts without finite isotropy on $M^{n+3}/\Gamma$ and the isolated $T^{n-1}/(T^{n-1}\cap \Gamma)$ orbits in $M^{n+3}/\Gamma$ correspond to isolated $T^{n-1}$ orbits in $M^{n+3}$.

\end{proof}


\begin{remark}We observe that in the case where we have a $T^2$ action on $M^5$, proposition~\ref{p:Tn-1orbits} implies  that when $M^*=S^3$, there are at least three isolated circle orbits.
\end{remark}


\subsection{Possible isotropy groups}

In this subsection, we use the isotropy representation of the possible isotropy groups to better understand fixed point components of finite isotropy and their corresponding images in the orbit space $M^*$.

By a theorem of Chang and Skjelbred~\cite{CS}, components of $\Fix(M; \zzz_k)$ are \linebreak
smooth submanifolds. When $k\neq 2$ these components are orientable and of even codimension. If $k=2$, components of $\Fix(M;\zzz_2)$ may also be non-orientable and by  theorem~\ref{T:Bredon_NSE}, of codimension at least $2$. In the case of a smooth  $T^2$ action on a closed, simply-connected smooth $5$-manifold $M^5$,  components of $\Fix(M^5;\zzz_k)$ must be at least $2$-dimensional, as we saw in the proof of lemma \ref{l:nofreeoralmostfree}. An analysis of the isotropy representations will show that for all cases the components of $\Fix(M^5;\zzz_k)$ must be $3$-dimensional.



\begin{proposition}\label{p:3dimfixed} Let $T^2$ act smoothly on $M^5$, a closed, simply-connected smooth $5$-manifold. If $M^*=S^3$, then the following hold. 
\begin{itemize}
	\item[(1)]The singular orbits of the action are $T^1$ and $T^1/\zzz_k$, $k\in \zzz^+$.\\
	\item[(2)]The exceptional orbits are $T^2/\zzz_k$, $k\geq 2$, and $T^2/(\zzz_2\times \zzz_2)$.\\
	\item[(3)] In all cases where there is finite cyclic isotropy, the corresponding fixed point set of finite isotropy is of dimension $3$.
\end{itemize}
\end{proposition}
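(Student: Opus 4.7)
My plan is to classify the possible isotropy subgroups $H \subset T^2$ by using that the orbit space $M^* = S^3$ is a topological $3$-manifold without boundary; equivalently, the space of directions $\Sigma_{\overline{p}} M^*$ at every point must be homeomorphic to $S^2$. Since every closed subgroup of $T^2$ is either the trivial group, a finite abelian group, a $1$-dimensional closed subgroup, or $T^2$ itself, the classification reduces to four cases analyzed via the slice representation.

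First I would rule out $T^2$-fixed points in $M^5$. At such a point, the slice is a $5$-dimensional orthogonal $T^2$-representation $\ccc(w_1) \oplus \ccc(w_2) \oplus \rrr^r$ with $r \leq 1$ by effectiveness, and a direct calculation shows that $S^4 / T^2$ is homeomorphic to a $2$-disk (with corners), so such a point would lie in $\partial M^*$, contradicting $M^* = S^3$. Next, for $1$-dimensional isotropy $H$, the identity component $H_0$ is a circle $T^1_{(a,b)}$, and $H/H_0$ is a finite subgroup of $T^2/H_0 \cong T^1$, hence cyclic $\zzz_k$; since $T^2$ is abelian and $T^1$ is divisible the sequence splits, giving $H \cong T^1$ or $H \cong T^1 \times \zzz_k$ as abstract groups. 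The corresponding orbits are $T^2/T^1 \cong T^1$ and $T^2/(T^1 \times \zzz_k) \cong T^1/\zzz_k$, which gives (1). A brief analysis of the slice representation on $\rrr^4$ shows the two weights of $T^1$ must be nonzero and coprime (otherwise a zero weight forces a $2$-dimensional fixed subspace in the slice, and the quotient $S^3/T^1$ becomes a disk, contributing to $\partial M^*$).

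For the exceptional orbits, i.e.\ $0$-dimensional nontrivial isotropy $H$, the slice is $\rrr^3$ with $H$ acting orthogonally, and the link in $M^*$ is $S^2/H$, which must be homeomorphic to $S^2$. Applying the Riemann--Hurwitz-type formula
\[
\chi(S^2/H) = \frac{1}{|H|}\sum_{h \in H}\chi\bigl((S^2)^h\bigr) = 2,
\]
I would show that every non-identity element $h \in H$ must contribute $\chi((S^2)^h) = 2$, i.e., act as a non-trivial rotation on $S^2$. Elements acting as reflections are immediately excluded, as their quotient contributes boundary to $M^*$; antipodal maps and rotoreflections contribute $\chi = 0$, which the sum cannot afford. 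Hence $H \subset SO(3)$ consists entirely of rotations, and since $H$ is abelian (being a subgroup of $T^2$), the classification of finite abelian subgroups of $SO(3)$ forces $H \cong \zzz_k$ or $H \cong \zzz_2 \times \zzz_2$, giving (2).

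Statement (3) then follows directly: if $H = \zzz_k$ is cyclic, all of its non-identity elements are powers of a single rotation and hence share a common rotation axis in the slice $\rrr^3$. The fixed set of $\zzz_k$ in the slice is therefore the $1$-dimensional axis, and combined with the $2$-dimensional tangent space to the principal $T^2$-orbit, $\Fix(M^5;\zzz_k)$ has dimension $3$ near any such point. The main obstacle is the link analysis: carefully establishing that every point of $M^*$ must have a $2$-sphere link (rather than a disk or a non-manifold quotient) and then cataloguing precisely which finite orthogonal actions on $\rrr^3$ produce such a link. Once this is in place the remaining step is simply the standard classification of finite abelian subgroups of $SO(3)$.
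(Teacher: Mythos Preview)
Your approach to parts (1) and (2) is essentially the same slice-representation analysis as the paper's, with one pleasant difference: the paper simply asserts that the finite isotropy at an exceptional orbit lies in $SO(3)$ (implicitly using that $T^2$ is connected, hence acts orientation-preservingly on the orientable manifold $M^5$, and trivially on the $2$-dimensional tangent to the orbit), whereas your Riemann--Hurwitz computation together with the exclusion of reflections via boundary makes this explicit. Both routes land on the classification of finite abelian subgroups of $SO(3)$.

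Your treatment of (3), however, is incomplete. You only analyze the case where the isotropy at a point is \emph{exactly} $\zzz_k$, i.e., a generic point on an exceptional orbit of type $T^2/\zzz_k$. The statement asks for more: it covers every orbit type in (1) and (2) at which a finite cyclic group appears. The paper separately checks (i) singular orbits $T^2/T^1$, where the circle acts on the normal $S^3$ and, if almost free, some $\zzz_k$ fixes a circle in that $S^3$, yielding a $3$-dimensional fixed set; (ii) singular orbits $T^2/(T^1\times\zzz_k)$, where the $\zzz_k$ factor fixes circles in the normal $S^3$; (iii) exceptional orbits $T^2/\zzz_2$, where one must rule out a free (antipodal) $\zzz_2$-action on the normal $S^2$; and (iv) exceptional orbits $T^2/(\zzz_2\times\zzz_2)$, where each of the three $\zzz_2$ subgroups fixes a pair of antipodal points on the normal $S^2$, hence a $3$-dimensional submanifold. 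Your argument in fact contains the seed for (iii) and (iv) --- you showed every nontrivial element of $H$ is a rotation, hence fixes two points on $S^2$ --- but you should say so, and cases (i)--(ii) require a separate look at the $T^1$-action on the normal $S^3$. These extra cases matter later in the paper, where the structure of the singular graph in $M^*$ hinges on knowing that singular and $\zzz_2\times\zzz_2$ orbits sit inside $3$-dimensional finite-isotropy strata.
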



\begin{proof} 

Since we have assumed that $M^*$ is homeomorphic to $S^3$, there are no points with $T^2$ isotropy. 
Observe that the normal sphere at any point of an exceptional orbit will be of dimension two.
Thus the finite isotropy group of an exceptional orbit must be a subgroup of $SO(3)$ and of $T^2$. Hence the only possible finite isotropy groups are $\zzz_k$, $k\geq 2$, and $\zzz_2\times \zzz_2$. This proves parts (1) and (2).

Now we prove part (3). We first consider the singular orbits, observing that if we have a singular orbit of the form $T^1/\zzz_k$, then we have a $T^1\times \zzz_k$ action on the normal $3$-sphere to any point of the orbit. In particular, there will be a finite cyclic subgroup 
of order $k$ in $T^1\times\zzz_k$ fixing circles in this normal $3$-sphere and therefore 
this orbit is contained in a fixed point set of finite isotropy of dimension $3$.  If the singular orbit is $T^1$, then the action of the circle on the normal $S^3$ is either free or almost free. In the latter case,  a finite cyclic subgroup fixes a $3$-dimensional submanifold which contains the singular orbit.

We now consider the exceptional orbits. For a $T^2/\zzz_k$ orbit, $k\neq 2$, the $\zzz_k$ action on $S^2$ is never free and thus this exceptional orbit will be contained in a $3$-dimensional submanifold fixed by $\zzz_k$, $k\neq 2$. It remains to show that for the exceptional orbit $T^2/\zzz_2$, the $\zzz_2$ isotropy group also does not act freely on its normal $S^2$. This follows from the fact that the antipodal map, which reverses orientation, generates the only free $\zzz_2$ action on $S^2$ and it is not a subgroup of $SO(3)$.

Finally, we consider the exceptional orbit $T^2/(\zzz_2\times \zzz_2)$. The action of the isotropy subgroup, $\zzz_2\times \zzz_2$, on the normal $S^2$ produces a quotient space equal to the double right-angled spherical triangle with three vertices, each of which is fixed by a 
different $\zzz_2$ subgroup of $\zzz_2\times \zzz_2$. Each fixed vertex corresponds to a $3$-dimensional submanifold fixed by the corresponding $\zzz_2$ subgroup. For each  $T^2/(\zzz_2\times \zzz_2)$ orbit we will have exactly three such fixed point sets intersecting in this orbit.
Thus, we conclude that the fixed point set of a finite cyclic group is always of dimension $3$.

\end{proof}

\subsection{The singular sets in $M^*=S^3$}

We now determine the structure of the singular sets in the orbit space in the particular case when $M$ is $5$-dimensional and $M^*=S^3$. We first observe that the existence of simple closed curves in $M^*$ whose points correspond to orbits with non-trivial finite cyclic isotropy $\mathbb{Z}_k$ is ruled out by work of Montgomery and Yang (cf. \cite{MY} Lemma 2.3). Taking this into account,  the following proposition follows directly from the proof of proposition~\ref{p:3dimfixed}.


\begin{proposition}\label{p:singsets} Let $T^2$ act smoothly on $M^5$, a closed, simply-connected smooth $5$-manifold. If  $M^*=S^3$,  then the set of points in $M^*$ with non-trivial isotropy corresponds to a graph and the following hold:
\begin{itemize}
\item[(1)] The vertices of the graph correspond to isolated singular points and the edges correspond to points with finite cyclic isotropy. \\
\item[(2)] The pre-image of the closure of an edge  corresponds to a $3$-dimensional manifold fixed by a finite cyclic group admitting a $T^2$ action of cohomogeneity one.\\
\item[(3)] Isolated singular points will correspond to isolated circle orbits. \\
\item[(4)] Edges can belong  to graphs with vertices of degree $1$, $2$ or $3$, but only the latter type corresponds to a $T^2/(\zzz_2\times \zzz_2)$ exceptional orbit.

\end{itemize}
\end{proposition}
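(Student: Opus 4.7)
The plan is to translate Proposition~\ref{p:3dimfixed} to the orbit space level and combine it with Lemma~2.3 of \cite{MY}. Since $\dim M^* = 3$ while singular orbits have dimension $1$ and exceptional orbits have dimension $2$, the image in $M^*$ of an exceptional orbit $T^2/\zzz_k$ is a $1$-dimensional stratum whose pre-image, by Proposition~\ref{p:3dimfixed}(3), is a $3$-dimensional component of $\Fix(M^5;\zzz_k)$. On the other hand, a singular orbit $T^1$ or $T^1/\zzz_k$, and likewise each $T^2/(\zzz_2 \times \zzz_2)$ exceptional orbit (which lies in the common intersection of three distinct $3$-dimensional $\zzz_2$-fixed components of $M^5$), projects to an isolated point in $M^*$.

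I would then invoke the cited Montgomery--Yang lemma to exclude simple closed $1$-dimensional strata of $\zzz_k$-isotropy. Each $1$-stratum is therefore an open arc with well-defined endpoints. Declaring these open arcs to be edges, and the isolated singular points (both endpoints of edges and any isolated singular-orbit image not meeting an edge) to be vertices, yields the graph structure claimed in (1).

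Part (2) is then direct: the closure of an edge of $\zzz_k$-isotropy is the image of a $T^2$-invariant connected component $F^3 \subset \Fix(M^5;\zzz_k)$, which is $3$-dimensional by Proposition~\ref{p:3dimfixed}(3). Since the generic $T^2$-orbit on $F^3$ is of type $T^2/\zzz_k$ and hence $2$-dimensional, the induced $T^2$-action on $F^3$ is cohomogeneity one. For (3), at each vertex the isotropy strictly contains the cyclic isotropy along every incident edge, so by Proposition~\ref{p:3dimfixed}(1)--(2) the orbit over a vertex must be either a singular circle orbit $T^1$ or $T^1/\zzz_k$, or the exceptional orbit $T^2/(\zzz_2 \times \zzz_2)$.

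For (4) I would run a slice-representation computation at each vertex type. At an isolated circle orbit the slice is a $4$-dimensional representation of the $1$-dimensional isotropy group; decomposing the identity-component action on $\ccc^2$ into weights $(p,q)$ shows that the unit normal $S^3$ carries at most two circles of finite cyclic isotropy, so such a vertex has degree at most $2$. At a $T^2/(\zzz_2 \times \zzz_2)$ orbit, the spherical-triangle analysis already carried out in the proof of Proposition~\ref{p:3dimfixed} displays exactly three distinct $3$-dimensional $\zzz_2$-fixed components meeting at the orbit, yielding a vertex of degree exactly $3$. The main obstacle is the bookkeeping in (4): one must verify that the three edges meeting at a degree-$3$ vertex are labelled by the three distinct $\zzz_2 \subset \zzz_2 \times \zzz_2$, and that an isolated circle orbit cannot produce a degree-$3$ vertex. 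Both facts are immediate from the slice-representation computations already invoked, since any three cyclic isotropies meeting at a common orbit must generate the full isotropy group at that orbit, and the list in Proposition~\ref{p:3dimfixed}(1)--(2) contains no $1$-dimensional isotropy group generated by three proper finite cyclic subgroups.
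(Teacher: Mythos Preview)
Your proposal is correct and follows essentially the same route as the paper. The paper's own proof is a single sentence: it cites Montgomery--Yang \cite{MY} to exclude simple closed curves of finite cyclic isotropy and then declares that everything else ``follows directly from the proof of proposition~\ref{p:3dimfixed}.'' You have spelled out that latter implication in detail---in particular the slice-representation count showing that an isolated circle orbit has degree at most~$2$, and the spherical-triangle picture at a $T^2/(\zzz_2\times\zzz_2)$ orbit giving degree exactly~$3$---but these are precisely the computations carried out inside the proof of Proposition~\ref{p:3dimfixed}, so no new ingredient is introduced. One minor phrasing issue: your closing sentence about ``no $1$-dimensional isotropy group generated by three proper finite cyclic subgroups'' is not quite the right formulation (such subgroups certainly exist in $S^1$); what you actually need, and what your weight decomposition on $\ccc^2$ already gives, is that the slice representation of a circle isotropy admits at most two invariant $2$-planes with nontrivial finite kernel.
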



\begin{figure}
\centering
\includegraphics[scale=0.7]{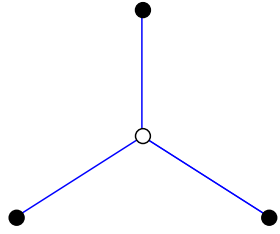}
\caption{Weighted claw: The central vertex has isotropy $\zzz_2\times\zzz_2$, the external vertices have isotropy conjugate to $S^1$ and the edges have isotropy $\zzz_2$.}
\label{F:triad}
\end{figure}

We will denote by \emph{arc}  the closure of an edge with finite cyclic isotropy in the set of orbits with non-trivial isotropy in $M^*$. Since the graphs corresponding to the singular set in $M^*$ carry isotropy information, we will refer to them as \emph{weighted graphs}.

We now begin the process of determining what $3$-manifolds may actually occur as fixed point set components of a finite cyclic isotropy group. Since these components admit an (almost) effective  $T^2$ action, 
they must be one of the manifolds listed in subsection \ref{SS:3MT2}. We will eventually show, in section \ref{s:dim5results},
that
the only such $3$-manifolds that can occur are  $S^3$, $\lpq$, $S^2\times S^1$ and $S^2\tilde{\times} S^1$.

We first observe that we may immediately rule out  $T^3$, since its orbit space would correspond to a simple closed curve in $M^*$ with finite cyclic isotropy and, as mentioned above, simple closed curves with finite cyclic isotropy will not occur. 

Of the possible $3$-manifolds on the list, the non-orientable ones are $\rrr P^2\times S^1$, $S^2\tilde{\times} S^1$, $\Kl\times S^1$ and $A$, and as such, they may only be fixed point set components of $\zzz_2$ isotropy.  All  have at least one exceptional orbit and correspond to the possible pre-images of arcs containing a vertex of degree three. 

If the singular set in $M^*$ contains a vertex of degree three, then it may contain different types of trees as subgraphs. Two types of trees may occur. The first type occurs if either $\rrr P^2\times S^1$ or $S^2\tilde{\times} S^1$ is the pre-image of an arc of $\zzz_2$ isotropy, in which case, the singular set contains a tree with one vertex of degree three joined to three vertices of degree one or two only. The second type occurs if $\Kl\times S^1$ or $A$  is the pre-image of an arc of $\zzz_2$ isotropy, in which case the singular set contains a tree with an edge terminating in two vertices of degree three, each of which is joined to two more vertices of degree one or two. We will see that when we take into consideration the lower curvature bound this second type of tree cannot occur, allowing us to exclude $\Kl\times S^1$ and $A$ as possible fixed point set components of $\zzz_2$ isotropy.
 
The first type of tree is the bipartite graph $K_{1,3}$, commonly known as a \emph{claw} (cf. \cite{D,GY}). Since vertices and edges carry isotropy information, we shall refer to this configuration as a \emph{weighted claw} (see Figure \ref{F:triad}).  An example of the second possible tree appears in Figure~\ref{F:2claws}. We will refer to such graphs as \emph{weighted trees}.
These graphs will appear in our analysis of the finite isotropy case in Section~\ref{s:5_dim_1}. 

Finally, we point out that the weighted graph could also contain a cycle. Moreover, this cycle could potentially be knotted in $M^*=S^3$. We will see in section~\ref{SS:unknot} that when the orbit space is non-negatively curved the  cycle cannot be knotted.


\begin{figure}
\centering
\includegraphics[scale=0.7]{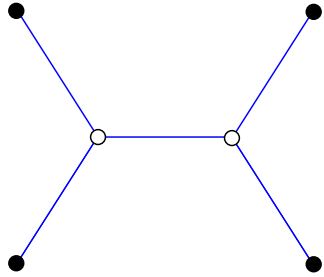}
\caption{Weighted tree: The two central vertices have isotropy $\zzz_2\times\zzz_2$, the external vertices have isotropy conjugate to 
$S^1$ and the edges have isotropy $\zzz_2$.}
\label{F:2claws}
\end{figure}

\section{Restrictions on the orbit space imposed by non-negative curvature}
\label{s:geom}

In this section we will see how non-negative curvature restricts the structure of the orbit space of an isometric $T^2$ action on a closed, simply-connected $5$-manifold. Throughout this section we will let $M^5$ be a closed, simply-connected $5$-manifold of non-negative curvature with an isometric $T^2$ action.


\subsection{Topology of orbit spaces with non-negative curvature}

As we noted earlier, the quotient space of a smooth $T^2$ action on a closed, simply-connected smooth $5$-manifold is homeomorphic to one of $S^3$ or $S^3$ with a finite number of disjoint open $3$-balls removed. For every open $3$-ball we remove we obtain an $S^2$ boundary component.  In the presence of non-negative curvature we have the following proposition.


\begin{proposition}\label{p:M*}
Let $M^5$ be a closed, simply-connected, non-negatively curved $5$-manifold. If $T^2$ acts isometrically on $M^5$, then $M^*$ is homeomorphic to one of the following:
\begin{itemize}
\item [(1)] $S^3$, if  for any $T^1\subset T^2$ for which $\Fix(M^5; T^1)\neq\emptyset$, $\dim(\Fix(M^5; T^1))=1$;\\

\item [(2)]  $D^3$ or $S^2\times I$, if $\dim(\Fix(M^5; T^1))=3$ for some $T^1\subset T^2$.
\end{itemize}
\end{proposition}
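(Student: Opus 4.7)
The plan is to first determine the possible topology of $M^*$ as an abstract $3$-manifold, and then use non-negative curvature to bound the number of boundary components. By Theorem~\ref{t:Bredon}, since $T^2$ is connected all orbits are connected, so $M^*$ is a simply-connected topological $3$-manifold, possibly with boundary. Invoking the resolution of the Poincar\'e conjecture, $M^*$ is homeomorphic to $S^3$ with $k\geq 0$ pairwise disjoint open $3$-balls removed, and $\partial M^*$ is a disjoint union of $k$ copies of $S^2$.

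Next I would identify when $\partial M^*\neq\emptyset$. A case analysis of the slice representations, in the spirit of the discussion preceding Proposition~\ref{p:3dimfixed} and using Theorem~\ref{T:Bredon_NSE} to exclude codimension-one fixed sets of finite isotropy, shows that a slice quotient has a half-space factor precisely when the stabilizer is a circle $T^1$ whose fixed subspace inside the slice is a $2$-plane; equivalently, the orbit sits inside a $3$-dimensional component of $\Fix(M^5;T^1)$. Hence $\partial M^*=\emptyset$ if and only if no $T^1\subset T^2$ has $\dim\Fix(M^5;T^1)=3$, which combined with the previous step gives case~(1): $k=0$ and $M^*=S^3$.

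For case~(2), assume some $T^1\subset T^2$ has a $3$-dimensional fixed set, so $\partial M^*\neq\emptyset$ and $k\geq 1$. Since $T^2$ acts isometrically on the non-negatively curved $M^5$, the orbit space $M^*$ is an Alexandrov space with $\curv M^*\geq 0$. I would then apply the Soul Theorem~\ref{T:soulthm} to obtain a totally convex compact soul $S\subset M^*$ with $\partial S=\emptyset$ that is a strong deformation retract of $M^*$; in particular $S$ is simply-connected. Now $\dim S=3$ is impossible, since then $S=M^*$ would have no boundary, contradicting $\partial M^*\neq\emptyset$; $\dim S=1$ forces $S=S^1$, contradicting simply-connectedness. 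Therefore $S$ is either a point or, by the classification of closed simply-connected non-negatively curved $2$-dimensional Alexandrov spaces without boundary, $S\cong S^2$.

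To finish, if $S$ is a point then $M^*$ is contractible and hence $k=1$, so $M^*\cong D^3$; if $S\cong S^2$ then $M^*$ has the homotopy type of $S^2$, forcing $k=2$ and $M^*\cong S^2\times I$. I expect the main obstacle to be the slice-representation analysis in the second paragraph, where one must simultaneously rule out boundary contributions arising from finite stabilizers and from $T^1$-stabilizers acting with only isolated fixed points in the slice; the soul-theoretic dimension analysis and the subsequent topological identification of simply-connected compact $3$-manifolds with boundary by their homotopy type are then comparatively routine.
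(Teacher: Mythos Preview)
Your argument is correct. For part~(1) you and the paper proceed identically: boundary points of $M^*$ arise exactly from codimension-two fixed components of circle subgroups, so $\partial M^*=\emptyset$ precisely when no circle has a $3$-dimensional fixed set, and then $M^*\cong S^3$.

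For part~(2) the approaches diverge. The paper simply invokes the Double Soul Theorem~\ref{T:sy2}, applied to a $T^1$-subaction on $M^5$ possessing a codimension-two fixed component, to bound the number of such components (and hence the number of boundary spheres of $M^*$) by two. You instead work entirely inside the $3$-dimensional Alexandrov space $M^*$ and apply the Soul Theorem~\ref{T:soulthm} directly there: the soul $S$ is a simply-connected closed Alexandrov space, so $\dim S\in\{0,2\}$, and comparing the homotopy type of $S^3$ minus $k$ open balls (a wedge of $k-1$ copies of $S^2$) with that of $S$ forces $k\in\{1,2\}$. Your route is self-contained within the orbit space and sidesteps the question of whether distinct boundary components of $M^*$ necessarily come from the \emph{same} circle subgroup, which is what one needs in order to apply Theorem~\ref{T:sy2} verbatim. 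The paper's route, in exchange, packages the step into a single citation and ties in with the $S^2$-bundle description used again in Section~\ref{s:5b}.
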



\begin{proof}

 Part (1) follows easily since only points belonging to a codimension two fixed point set of a circle will correspond to boundary points in the orbit space $M^*$. Note that part (1) is independent of the curvature assumption. Part (2) follows from the Double Soul theorem \ref{T:sy2}.

\end{proof}


\subsection{Upper bound on the number of isolated circle orbits in $M^5$}
\label{SS:4-circles}

In the previous section, in proposition~\ref{p:Tn-1orbits}, we found a lower bound of three for the number of isolated circle orbits in $M^5$ for the case where $M^*=S^3$. We now propose to determine an upper bound on the number of isolated circle orbits when $M^5$ is non-negatively curved. Theorem~\ref{T:4-circles} below will show that there can be at most four such orbits. 
 
 A simple application of the Extent lemma tells us that in $M^*=M/G$, where $G$ acts isometrically on $M$, a closed manifold of positive curvature, there are at most $3$ singular points with space of directions isometric to $S^2(1/2)$ or a ``thin'' $S^2(1/2)$, that is, the quotient of $S^3(1)$ by an almost free $S^1$ action. If $M$ is non-negatively curved, 
 the Extent lemma tells us that there will be at most $5$ such singular points. A closer analysis of the geometry will allow us to show that in the case where  $M$ is $5$-dimensional, non-negatively curved and admits an isometric $T^2$ action, there will be at most $4$ isolated circle orbits.

This upper bound follows from the generalization of an argument used in \linebreak Kleiner's thesis \cite{K}  showing that an isometric circle action on a closed, simply-connected, non-negatively curved $4$-manifold has at most four isolated fixed points.  We remark that the same result was obtained in \cite{SY94}, but the argument used to prove the result was specific to dimension $4$ and does not generalize to higher dimensions. The key observation that allows us to apply the techniques in \cite{K} to our situation  is that the normal sphere at a point to each one of the circles fixed by some $S^1\subset T^2$ is $3$-dimensional. We include the proof of the theorem here for the sake of completeness since Kleiner's result was never published. 


\begin{theorem}
\label{T:4-circles}  Let $M^5$ be a closed non-negatively curved $5$-manifold with an isometric $T^2$ action. Then there are at most 4 isolated circle orbits of the $T^2$ action.
\end{theorem}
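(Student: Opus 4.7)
The plan is to argue by contradiction, generalizing to dimension $5$ the extent-type argument Kleiner \cite{K} used in his thesis to bound by four the number of isolated fixed points of an isometric circle action on a closed, simply-connected, non-negatively curved $4$-manifold. The observation that makes the transplant possible is that, at an isolated circle orbit $T^1\cdot p\subset M^5$, the orbit is $1$-dimensional and the slice isotropy is a circle $T^1_p\subset T^2$ acting linearly on the $4$-dimensional normal space; the unit normal sphere $S^{\perp}_p$ is therefore a round $3$-sphere, and the space of directions in $M^*$ at the corresponding singular point $\overline{p}$ is
$$\Sigma_{\overline{p}}\;\cong\;S^{\perp}_p/T^1_p,$$
a two-dimensional Alexandrov space of curvature $\geq 1$ with diameter at most $\pi/2$, which is exactly the model analyzed by Kleiner at an isolated $T^1$-fixed point of a non-negatively curved $M^4$.

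Assume for contradiction that $M^*$ contains at least five isolated singular points $\overline{p}_0,\ldots,\overline{p}_4$ of this type. Applying the Extent Lemma with $q=4$ yields
$$\frac{1}{5}\sum_{i=0}^{4}\xt_4(\Sigma_{\overline{p}_i})\;\geq\;\frac{\pi}{3},$$
so at least one $\Sigma_{\overline{p}_i}$ must satisfy $\xt_4\geq \pi/3$. I then intend to contradict this by the sharp geometric estimate
$$\xt_4(S^3/T^1)\;<\;\frac{\pi}{3}$$
for every effective linear circle action on $S^3$. In the Hopf case the quotient is the round sphere $S^2(1/2)$ of diameter $\pi/2$; a symmetrization and first-variation argument reduces extremal $5$-point configurations to a short list of symmetric ones (e.g.\ the trigonal bipyramid and the square pyramid with apex at a pole), and direct computation in each gives average pairwise distance $3\pi/10<\pi/3$. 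For a non-Hopf weight action the quotient is obtained from $S^2(1/2)$ by additional identifications producing two cone points of cone angle less than $2\pi$, so pairwise distances can only shrink and the bound is preserved.

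The main obstacle is the final extent estimate. The diameter bound $\pi/2$ alone is too weak, and the essential input is the lower curvature bound on $\Sigma_{\overline{p}_i}$; rigorously ruling out non-symmetric critical configurations of the average-pairwise-distance functional on five points in $S^2(1/2)$, and in particular upgrading the resulting estimate to strict inequality, is the delicate step, parallel to the one Kleiner carried out in the dimension-$4$ case. It is precisely the strictness of $\xt_4(S^3/T^1)<\pi/3$ that prevents five isolated circle orbits from being attained.
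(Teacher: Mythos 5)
Your proposal contains a decisive computational error that the paper itself explicitly warns against. You claim $\xt_4\bigl(S^2(1/2)\bigr)<\pi/3$, but in fact $\xt_4\bigl(S^2(1/2)\bigr)=\pi/3$ exactly: the configuration consisting of two antipodal pairs (say the poles and two antipodal equatorial points) has two pairwise distances equal to $\pi/2$ and four equal to $\pi/4$, giving average $\bigl(2\cdot\tfrac{\pi}{2}+4\cdot\tfrac{\pi}{4}\bigr)/6=\pi/3$. (Your figure $3\pi/10$ appears to come from evaluating a $5$-point average such as a trigonal bipyramid, which would be relevant for $\xt_5$; but the Extent Lemma applied to $5$ points $\overline{p}_0,\dots,\overline{p}_4$ requires $\xt_4$, not $\xt_5$.) Since the Extent Lemma gives $\frac{1}{5}\sum\xt_4(\Sigma_{\overline{p}_i})\geq\pi/3$ while $\xt_4(\Sigma_{\overline{p}_i})\leq\pi/3$, the two are compatible and no contradiction results. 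Indeed, the Extent Lemma alone only rules out $6$ or more isolated circle orbits — this is stated in the paper just before Theorem~\ref{T:4-circles} — so your strategy cannot improve the bound from $5$ to $4$.

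The actual argument (following Kleiner) is a rigidity argument in the equality case, not a strict inequality in the extent. Assuming $5$ isolated circle orbits forces equality throughout the Extent Lemma, which in turn forces each tetrahedron of singular points to have angle sums exactly $\pi$ at every vertex and on every face (Lemma~\ref{L:angle_bound}), forces each isotropy group to be a circle acting on the normal $S^3$ by the Hopf action, and forces the direction sets $\mathrm{Dir}_{p_i}(p_j)$ to be single Hopf orbits pairing off at distance $\pi/2$ in $\Sigma_{\overline{p}_i}$ (Lemma~\ref{L:5_circle_orbits}). The angle-sum rigidity then yields, via the equality case of Toponogov's theorem, flat totally geodesic triangles spanning triples of orbits; varying the third vertex along its circle orbit and invoking the uniqueness of the distance-realizing point in a Hopf orbit (Lemma~\ref{L:5_circle_orbits_B}) produces a contradiction. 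This mechanism — extracting geometric rigidity from equality in a comparison inequality — is essential, and it is precisely what your proposal lacks. Without it the theorem cannot be reached by the extent bound alone.
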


The proof of theorem~\ref{T:4-circles} will occupy the remainder of this subsection. We begin by fixing some notation and recasting several lemmas from \cite{K} to meet our needs. We assume that all geodesics have unit speed unless stated otherwise. 

 Let $\{p_i\}_{i=1}^4$ be four distinct points in $M^5$. For $1\leq i,j\leq 4$, let $\Gamma_{ij}$ be the set of minimizing normal geodesics from $p_i$ to $p_j$ and, for each triple $1\leq i,j,k\leq 4$, let 
\[
\alpha_{ijk}=\angle_{p_i}(p_j, p_k) =\min\{\, \angle(\gamma'_j(0),\gamma'_k(0)) : \gamma_j\in \Gamma_{ij}, \gamma_k\in\Gamma_{ik}\,\}.
\]
For each pair of points $p_i$, $p_j$, $i\neq j$, let $\mathrm{Dir}_{p_i}(p_j)$ be the set of initial directions of a normal minimizing geodesic from $p_i$ to  $p_j$, that is, 
\[
\mathrm{Dir}_{p_i}(p_j)=\{\, \gamma'_j(0):\gamma_j\in \Gamma_{ij} \,\}.
\]
Finally, let $T_{ijkl}$ denote the (possibly degenerate) tetrahedron determined by the four points $p_i, p_j, p_k, p_l$.
\\


Before proceeding with the proof of theorem~\ref{T:4-circles}, we recall the  following fact (cf. \cite{SY94}).


\begin{lemma}\label{l:Lipschitz} Suppose $S^1$ acts isometrically and fixed point freely on $S^3(1)$.
Then $S^3/S^1$ is smaller than $S^2(1/2)=S^3/S^1_{\txt{Hopf}}$. That is, there 
is a surjective $1$-Lipschitz map $S^2(1/2)\rightarrow S^3/S^1$.
\end{lemma}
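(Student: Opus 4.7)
The plan is to first classify the action, then compute both quotient metrics in a common warped-product form, and finally define the $1$-Lipschitz map by matching those forms.

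Since $\mathrm{Isom}(S^3(1))=O(4)$, any isometric $S^1$-action is generated by a skew-symmetric $4\times 4$ matrix, which after conjugation in $O(4)$ decomposes into two $2\times 2$ blocks with angular speeds $p$ and $q$. Closure of the flow forces $p/q\in\mathbb{Q}$, and freedom from fixed points forces $p,q\ne 0$; after rescaling we may take $p, q$ to be positive coprime integers. Thus, up to conjugation by an isometry of $S^3$, the action is the weighted Hopf action $\phi_{p,q}(\theta)\cdot(z_1,z_2)=(e^{ip\theta}z_1, e^{iq\theta}z_2)$, and it suffices to produce a surjective $1$-Lipschitz map $f_{p,q}:S^2(1/2)\to S^3/S^1_{p,q}$ for each such pair.

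I will exploit the fact that both $S^1_H:=S^1_{1,1}$ and $S^1_{p,q}$ are subgroups of the standard $T^2\subset SO(4)$, so that both quotients project onto the common segment $S^3/T^2=[0,\pi/2]$. In $T^2$-invariant coordinates $(t,\alpha,\beta)$ with $z_1=\cos t\,e^{i\alpha}$, $z_2=\sin t\,e^{i\beta}$, the round metric is $dt^2+\cos^2 t\,d\alpha^2+\sin^2 t\,d\beta^2$. Applying the horizontal-metric formula for Riemannian submersions on each Clifford-type torus $\{t\}\times T^2$, I expect to obtain the warped-product expressions
\begin{equation*}
g_H=dt^2+\tfrac{1}{4}\sin^2(2t)\,d\phi^2, \qquad g_{p,q}=dt^2+\frac{\sin^2(2t)}{4(p^2\cos^2 t+q^2\sin^2 t)}\,d\phi^2,
\end{equation*}
where in each case $\phi\in[0,2\pi)$ is an arclength-normalized coordinate on the fiber circle over $t$. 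Since $p,q$ are positive integers, $p^2\cos^2 t+q^2\sin^2 t\ge 1$, whence $g_{p,q}\le g_H$ as quadratic forms in $(\partial_t,\partial_\phi)$.

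Define $f_{p,q}:S^2(1/2)\to S^3/S^1_{p,q}$ to be the identity in these common $(t,\phi)$-coordinates; it is surjective by construction, and the inequality $g_{p,q}\le g_H$ immediately gives the $1$-Lipschitz property on the smooth part. The main obstacle I anticipate is the extension across the two singular points $t=0,\pi/2$ of $S^3/S^1_{p,q}$, which are orbifold cone points of cone angles $2\pi/p$ and $2\pi/q$ (whereas the corresponding poles of $S^2(1/2)$ are smooth, with cone angle $2\pi$). A local comparison in polar coordinates at each pole shows that shrinking the cone angle only contracts arc lengths, so $f_{p,q}$ extends continuously and remains $1$-Lipschitz across the singular points, completing the construction.
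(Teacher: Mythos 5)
The paper does not actually give a proof of this lemma: it is stated as a recalled fact with a pointer to \cite{SY94}, so there is no in-paper argument to compare against. Judged on its own terms, your proof is correct and self-contained. The reduction to a weighted Hopf action $\phi_{p,q}$ with $p,q$ positive coprime integers is right (connectedness of $S^1$ puts the action in $SO(4)$, maximal-torus conjugation diagonalizes it, and integrality of $p,q$ is automatic for a genuine $S^1$-homomorphism; freeness of the fixed-point set gives $p,q\ne 0$). The warped-product formulas check out: with Killing field $V_{p,q}=p\partial_\alpha+q\partial_\beta$, its horizontal complement in $\mathrm{span}\{\partial_\alpha,\partial_\beta\}$ is $W_{p,q}=q\sin^2 t\,\partial_\alpha-p\cos^2 t\,\partial_\beta$, the invariant angular coordinate is $\phi=q\alpha-p\beta$ (with period $2\pi$ exactly because $\gcd(p,q)=1$), and the submersion computation yields $g_{p,q}=dt^2+\tfrac{\sin^2(2t)}{4(p^2\cos^2 t+q^2\sin^2 t)}\,d\phi^2$, which dominates $g_H$ precisely because $p^2\cos^2 t+q^2\sin^2 t\ge 1$. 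The discussion of the two orbifold points $t=0,\pi/2$ could be tightened, but no gap is hiding there: the coordinate identity map is continuous (meridian distance to each pole equals $t$ in both metrics), and a continuous map between compact length spaces that is $1$-Lipschitz off a finite set is $1$-Lipschitz everywhere, so the extension argument you sketch via cone-angle comparison is more than enough. One minor wording issue: you call $\phi$ ``arclength-normalized,'' but it is really the $2\pi$-periodic angular coordinate induced by $q\alpha-p\beta$; arclength normalization would flatten the $d\phi^2$ coefficient to $1$ and obscure the comparison.
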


We have the following lemma.


\begin{lemma}

\label{L:angle_bound}
If there are $4$ isolated circle orbits $\{N_i\}_{i=1}^{4}$, then, for distinct points  $p_i\in N_i$\, , $1\leq i\leq 4$, and every quadruple of distinct integers $1\leq i,j,k, l\leq 4$, the tetrahedron $T_{ijkl}$ is rigid in the following sense:
\begin{equation}
\label{E:angle_bound}
\alpha_{ijk} + \alpha_{ijl} + \alpha_{ikl}  = \pi
\end{equation}
and
\begin{equation}
\label{E:side_bound}
\alpha_{ijk} + \alpha_{jki} + \alpha_{kij}  = \pi,
\end{equation}
that is, the sum of angles at each vertex and the sum of angles of each face of $T_{ijkl}$ are both $\pi$.

\end{lemma}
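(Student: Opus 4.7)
Pass to the orbit space $M^*=M^5/T^2$, an Alexandrov space of nonnegative curvature, and let $\bar p_i\in M^*$ denote the image of $N_i$. By choosing each $p_i\in N_i$ so that the pairs $(p_i,p_j)$ realize the orbital distances $d_{M^*}(\bar p_i,\bar p_j)$, the minimizers in $\Gamma_{ij}$ project to minimizing geodesics in $M^*$, and each $\alpha_{ijk}$ coincides with the Alexandrov angle $\angle_{\bar p_i}(\bar p_j,\bar p_k)$ in $M^*$. The proof then bounds these twelve angles in two complementary ways.

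\emph{Step 1 (vertex sums $\leq \pi$).} Since $N_i$ is a one-dimensional $T^2$-orbit, the isotropy $G_{p_i}$ is a circle $T^1$ and the unit normal sphere to $N_i$ at $p_i$ is $S^3$. Because $N_i$ is \emph{isolated}, every $T^2$-orbit in a small punctured neighborhood of $N_i$ is two-dimensional, so the isotropy at nearby points is a proper, hence finite, subgroup of $G_{p_i}$. Thus $G_{p_i}=T^1$ acts almost freely on the normal $S^3$, and Lemma~\ref{l:Lipschitz} supplies a surjective $1$-Lipschitz map $S^2(1/2)\to S^3/G_{p_i}=\Sigma_{\bar p_i}M^*$. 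Three equidistant points on a great circle of $S^2(1/2)$ (of total length $\pi$) are pairwise at distance $\pi/3$, realizing $\xt_3\bigl(S^2(1/2)\bigr)=\pi/3$. Applied to the three directions in $\Sigma_{\bar p_i}M^*$ pointing to $\bar p_j,\bar p_k,\bar p_l$ this yields
\begin{equation*}
\alpha_{ijk}+\alpha_{ijl}+\alpha_{ikl}\;\leq\;3\,\xt_3\bigl(\Sigma_{\bar p_i}M^*\bigr)\;\leq\;\pi.
\end{equation*}

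\emph{Step 2 (face sums and double counting).} Toponogov's triangle comparison in the nonnegatively curved space $M^*$ yields, for every three-element subset $\{i,j,k\}\subset\{1,2,3,4\}$,
\begin{equation*}
\alpha_{ijk}+\alpha_{jki}+\alpha_{kij}\;\geq\;\pi.
\end{equation*}
The twelve angles of $T_{ijkl}$ partition in two ways: either into four vertex-triples (the summands of Step~1, one per vertex) or into four face-triples (the summands of Toponogov, one per face). Summing Step~1 over the four vertices bounds the total by $4\pi$ from above; summing Toponogov over the four faces bounds the same total by $4\pi$ from below. Equality is therefore forced term-by-term in both families of inequalities, producing exactly \eqref{E:angle_bound} at each vertex and \eqref{E:side_bound} on each face.

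The main difficulty is Step~1, namely the $1$-Lipschitz domination of $\Sigma_{\bar p_i}M^*$ by the thin round sphere $S^2(1/2)$. This combines the isolation of the circle orbit --- which promotes the isotropy $G_{p_i}=T^1$ to an almost free $T^1$-action on the $3$-sphere normal to $N_i$ --- with the Hopf comparison of Lemma~\ref{l:Lipschitz}. Once this uniform bound on every space of directions is secured, the rigidity of the tetrahedron $T_{ijkl}$ drops out of a tight double count pitting the vertex upper bounds against Toponogov's face lower bounds.
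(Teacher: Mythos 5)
Your proof is correct and follows essentially the same route as the paper's: Toponogov's comparison in the nonnegatively curved orbit space gives the face lower bounds $\geq\pi$, the $1$-Lipschitz domination of each $\Sigma_{\bar p_i}M^*$ by $S^2(1/2)$ (via Lemma~\ref{l:Lipschitz}, i.e.\ the extent bound $\xt_3\leq\pi/3$) gives the vertex upper bounds $\leq\pi$, and double-counting the twelve angles of $T_{ijkl}$ by vertices versus by faces forces equality throughout. The only remark worth making is that your opening claim that all base points $p_i$ can be chosen simultaneously so that \emph{every} pair $(p_i,p_j)$ realizes the orbital distance is not in general achievable; but since the argument is really carried out intrinsically in the orbit space (as the paper also does), this does not affect the proof.
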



\begin{proof}
In the orbit space $X^3=M^5/T^2$, the $4$ circles $\{N_i\}_{i=1}^4$ correspond to 4 points $\{\bar{p}_i\}_{i=1}^4$. By Toponogov's theorem for Alexandrov spaces (cf. \cite{BGP}), we know that the sum of the angles of a geodesic triangle in $X^3$ will be greater than or equal to $\pi$. Connecting each pair of distinct  points in $\{\overline{p}_i\}_{i=1}^4$ by a minimal geodesic
we obtain a configuration of four triangles and the total sum of the angles in this configuration will be greater than or equal to $4\pi$.

For each one of the four points $\{p_i\}_{i=1}^4$ the coresponding isotropy group acts freely or almost freely on the normal space $T_{p_i}N_i^{\perp}$  and
the quotient of the unit normal sphere $S^3\subset T_{p_i}N_i^{\perp}$ is $S^2(\frac{1}{2})$, the round sphere of radius $1/2$ in the first case or a ``thin" $S^2(1/2)$ in the second case. Hence $\xt_q(\Sigma_{\overline{p}_i}X^3)\leq \xt_q(S^2(\frac{1}{2}))$ for any $q\geq 2$.

Using the fact that $\xt_3(S^2(\frac{1}{2}))=\pi/3$, it is easily seen that for any triple of distinct points $x_j, x_k, x_l\in S^2(\frac{1}{2})$, we have
\bdm
\textrm{dist}(x_j, x_k) + \textrm{dist}(x_j,x_l) + \textrm{dist}(x_k, x_l)\leq \pi.
\edm
Thus summing over all the triangles formed by the points $\{\bar{p}_i\}_{i=1}^4$ we find that the sum of their angles should be less than or equal to $4\pi$.  Therefore this sum of angles must be exactly $4\pi$.

\end{proof}


\begin{lemma}
\label{L:5_circle_orbits} If there are $5$ isolated circle orbits $\{N_i\}_{i=1}^{5}$ then, for fixed $1\leq i\leq 5$ and points $p_j\in N_j$, $1\leq j \leq 5$, $j\neq i$, the following hold.
\begin{itemize}
	\item[(1)] For each $i$ and each $p_i\in N_i$, we have $G_{p_i}=S^1$ and its slice representation is the Hopf action.

	\item[(2)] For each $j\neq i$, the sets $\mathrm{Dir}_{p_i}(p_j)$, consist of a single $S^1$ Hopf orbit and come in mutually orthogonal pairs, that is, given $i$, for each set of distinct $j,k,l,m$, up to reordering, we can assume that 
	\begin{equation*}
		\alpha_{ijk}=\alpha_{ilm}=\pi/2
	\end{equation*}
	and the remaining angles are all $\pi/4$.
	\end{itemize}
\end{lemma}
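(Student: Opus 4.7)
The plan is to apply the Extent Lemma~\ref{L:xt} with $q=4$ to the five points $\overline{p}_i \in X = M^5/T^2$ corresponding to the five isolated circle orbits $N_i$, combined with Lemma~\ref{l:Lipschitz} to bound the extents of the spaces of directions. Since each $N_i$ is an isolated circle orbit of $T^2$, the isotropy $G_{p_i}$ is a $1$-dimensional subgroup of $T^2$ acting without fixed points on the normal $3$-sphere $S^{\perp}_{p_i}$, so $\Sigma_{\overline{p}_i}X = S^3/G_{p_i}$. By Lemma~\ref{l:Lipschitz} there is a $1$-Lipschitz surjection $S^2(1/2)\to S^3/G_{p_i}$, yielding $\xt_4(\Sigma_{\overline{p}_i}X) \leq \xt_4(S^2(1/2))$.

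A direct computation gives $\xt_4(S^2(1/2)) = \pi/3$, attained by any configuration of two pairs of antipodal points on $S^2(1/2)$. The Extent Lemma yields $\tfrac{1}{5}\sum_{i=1}^{5}\xt_4(\Sigma_{\overline{p}_i}X)\geq\pi/3$. Since each summand is at most $\pi/3$, equality is forced at every vertex. For each $i$ the equality $\xt_4(S^3/G_{p_i})=\xt_4(S^2(1/2))$ forces $G_{p_i}$ to act freely on $S^3$ and the quotient to be isometric to $S^2(1/2)$; any finite extension or truly almost free action collapses the quotient and strictly lowers the $4$-extent. Since a free isometric $S^1$-action on $S^3$ is conjugate to the Hopf action, this establishes part~(1). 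As the quotient map is the Hopf fibration, the preimage of a single point in $\Sigma_{\overline{p}_i}X$ is a single Hopf $S^1$-orbit, so $\mathrm{Dir}_{p_i}(p_j)$ consists of a single Hopf circle for each $j\neq i$.

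For part~(2), equality in the Extent Lemma also forces the four directions $\mathrm{Dir}_{\overline{p}_i}(\overline{p}_j) \in \Sigma_{\overline{p}_i}X\cong S^2(1/2)$, $j\neq i$, to realize a maximizing configuration, hence to consist of two pairs of antipodal points. After reordering so that these antipodal pairs are $\{j,k\}$ and $\{l,m\}$, this already yields $\alpha_{ijk} = \alpha_{ilm} = \pi/2$. The relative angle between the two antipodal pairs is then pinned down by applying Lemma~\ref{L:angle_bound} to every $4$-element subset of $\{\overline{p}_1,\dots,\overline{p}_5\}$: the vertex-sum equalities $\alpha_{ijk}+\alpha_{ijl}+\alpha_{ikl}=\pi$ together with the face-sum equalities on each of the $\binom{5}{3}=10$ triangles, imposed simultaneously at all five vertices with antipodal-pair structures, form a system whose only solution compatible with the antipodal structure at every vertex has the two pairs at $\overline{p}_i$ orthogonal, forcing the remaining four angles to equal $\pi/4$.

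The main obstacle is this final rigidity argument. The family of two-antipodal-pair configurations on $S^2(1/2)$ all attain $\xt_4 = \pi/3$ regardless of the relative angle between the two pairs, so orthogonality cannot be read off from a single vertex. I expect the cleanest route to be to exploit the equality case of Toponogov's theorem, which forces each geodesic triangle spanned by three of the $\overline{p}_i$ to bound a totally geodesic flat surface in $X$, and then to combine this planar rigidity with the global combinatorial constraint that every one of the five vertices must see its four outgoing directions as two antipodal pairs.
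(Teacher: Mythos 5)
Your proposal diverges from the paper's route in a structurally interesting way, but it has two genuine gaps, one of which you acknowledge yourself.

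The paper does not invoke the Extent Lemma with $q=4$ at all. Instead it applies Lemma~\ref{L:angle_bound} (rigidity of the tetrahedron $T_{ijkl}$) to every $4$-element subset of the five points: at a fixed vertex $\bar{p}_5$, the vertex-sum identity $\alpha_{5jk}+\alpha_{5kl}+\alpha_{5lj}=\pi$ for every triple $j,k,l$ forces the four direction sets $D_j\subset\Sigma_{\bar{p}_5}$ to be single points lying on a common great circle and pairing off antipodally; it then argues directly that no ``thin'' $S^2(1/2)$ can accommodate four such points with every triple summing to $\pi$. Your substitute — applying the Extent Lemma with $q=4$ to all five points at once and forcing $\xt_4(\Sigma_{\bar{p}_i}X)=\pi/3$ at every vertex — is appealingly symmetric, and your computation $\xt_4(S^2(1/2))=\pi/3$ is correct. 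But there is a hidden step: the Extent Lemma bounds the \emph{supremum} over configurations of four points, not the particular configuration $\{\mathrm{Dir}_{\bar{p}_i}(\bar{p}_j)\}_{j\neq i}$. To conclude that the actual geodesic directions realize a maximizing configuration you must unpack the proof of the Extent Lemma (pass through Toponogov on each triangle $\bar{p}_i\bar{p}_j\bar{p}_k$ and use that every comparison angle inequality is saturated). This can be done, but it amounts to recreating the per-triangle analysis that the paper carries out anyway via Lemma~\ref{L:angle_bound}.

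The first real gap is your assertion that equality $\xt_4(S^3/G_{p_i})=\pi/3$ forces the quotient to be isometric to $S^2(1/2)$, so that the action is free and Hopf. A $1$-Lipschitz surjection does not, in general, strictly decrease $q$-extents, so you need an argument specific to the quotients $S^3/S^1_{p,q}$ (and their finite extensions) showing the $4$-extent genuinely drops when $(p,q)\neq(1,1)$ or when the isotropy has a nontrivial finite component. The paper sidesteps this by arguing through the triples: every triple of the four directions must have pairwise distances summing to exactly $\pi$, and a ``thin'' $S^2(1/2)$ cannot support four such directions; that is a cleaner and more local criterion than the $4$-extent.

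The second gap you flag yourself: the orthogonality of the two antipodal pairs, i.e.\ the claim that the four non-antipodal angles equal $\pi/4$. Your own computation shows that on $S^2(1/2)$ the entire one-parameter family of two-antipodal-pair configurations attains $\xt_4=\pi/3$, so no single-vertex extent argument can pin down the angle between the two axes. You sketch the right idea (Toponogov equality yields flat totally geodesic triangles, and the five antipodal structures must be globally compatible via the face-sum identities of Lemma~\ref{L:angle_bound}), but you do not carry it out. Since this is precisely the part of the statement that is nontrivial beyond what Lemma~\ref{L:angle_bound} alone gives, the proposal is incomplete as a proof.
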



\begin{proof} 

For convenience,  let $i=5$. By lemma~\ref{L:angle_bound}, we have 
\bdm
\alpha_{5jk} + \alpha_{5kl} + \alpha_{5lj}=\pi,
\edm
for the 4 points $p_j, p_k, p_l, p_5$, with  $j,k,l\neq 5$.
For $m=j,k,l$, let 
$$
D_m=\textrm{Dir}_{p_5}(p_m)/T^2_{p_5}.
$$
Observe that $D_m\subset \Sigma_{\bar{p}_5}=S^3/T^2_{p_5}$.

We have already seen in the proof of lemma \ref{L:angle_bound} that the sum of the distances between any set of three distinct points in 
$\Sigma_{\bar{p}_i}$ is equal to $\pi$ for any $i\in \{1, 2, 3, 4, 5\}$. From the geometry of the space of directions
this tells us that $D_j, D_k$ and $D_l$ are points in $\Sigma_{\bar{p}_5}$. In the case where $\Sigma_{\bar{p}_5}=S^2(1/2)$ then either two of them are antipodal (at distance $\pi/2$ in $S^2(1/2)$) or all three of them lie on a great circle. Note that in this last case the three points cannot lie in half of the great circle. In the second case, where $\Sigma_{\bar{p}_5}$ is a ``thin" $S^2(1/2)$,  two of the points must be at distance $\pi/2$. Since this is true for any choice of three of the four possible directions, one may conclude that $\Sigma_{\bar{p}_5}$ cannot be smaller than $S^2(1/2)$. In particular, this implies that the isotropy group of each isolated circle orbit is $S^1$ and that the action is the Hopf action. This proves part (1) of the lemma.

Finally,  one can conclude that any four singular points in the space of directions must lie on a great circle and consist of two pairs of antipodal points. This implies that the sets $D_m$ lie on a great circle, consist of a single point each and must pair off as antipodal points, thus proving part (2) of the lemma.

\end{proof}


\begin{lemma}
\label{L:5_circle_orbits_B}  Let $S^1$ act on $\ccc^2$ by scalar multiplication and suppose that $v, w\in 
S^3(1)\subset \ccc^2$. Then 
either $\dist (S^1(v), S^1(w))=\p2$ or there exists a unique $t\in S^1$ such that $\dist(v, tw)=\dist(S^1(v), S^1(w))=\dist(v, S^1(w))$.
\end{lemma}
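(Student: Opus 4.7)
The plan is to reduce the lemma to a one-variable calculus problem via the Hermitian inner product. Identify $S^3(1)\subset\ccc^2$ with the unit sphere in $\rrr^4$ so that the spherical distance satisfies
\[
\dist(v,tw)=\arccos\bigl(\mathrm{Re}\langle v,tw\rangle_{\ccc}\bigr),
\]
where $\langle\cdot,\cdot\rangle_{\ccc}$ denotes the standard Hermitian inner product on $\ccc^2$, and note that $\langle v,tw\rangle_{\ccc}=\bar t\,\langle v,w\rangle_{\ccc}$. Since $S^1$ acts by isometries, the distance from $v$ to any point of $S^1(w)$ computes $\dist(S^1(v),S^1(w))$, so it suffices to minimize $\dist(v,tw)$ over $t\in S^1$, equivalently to maximize $\mathrm{Re}\langle v,tw\rangle_{\ccc}$ over $t\in S^1$.

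Write $\langle v,w\rangle_{\ccc}=re^{i\theta}$ with $r=|\langle v,w\rangle_{\ccc}|\in[0,1]$, and $t=e^{is}$. Then
\[
\mathrm{Re}\langle v,tw\rangle_{\ccc}=\mathrm{Re}\bigl(e^{-is}\cdot re^{i\theta}\bigr)=r\cos(\theta-s).
\]
First I would dispose of the degenerate case $r=0$: here the right-hand side is identically zero for every $s$, so $\dist(v,tw)=\pi/2$ for every $t\in S^1$, giving $\dist(S^1(v),S^1(w))=\pi/2$, which is the first alternative of the lemma. In the remaining case $r>0$, the function $s\mapsto r\cos(\theta-s)$ attains its maximum $r$ at the unique value $s\equiv\theta\pmod{2\pi}$, corresponding to the unique $t=e^{i\theta}\in S^1$. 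For this $t$,
\[
\dist(v,tw)=\arccos(r)<\pi/2=\dist(v,t'w)\text{ at no other critical point},
\]
so this $t$ realizes $\dist(v,S^1(w))=\dist(S^1(v),S^1(w))$, and uniqueness is immediate from strict concavity of $\cos$ on an interval of length $<2\pi$ containing its maximum.

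There is no serious obstacle here; the argument is a direct computation with the Hermitian inner product, and the only thing to keep track of is the bookkeeping that the two alternatives of the lemma correspond exactly to the dichotomy $r=0$ versus $r>0$, i.e.\ to whether $v$ and $w$ are Hermitian orthogonal. One small subtlety worth flagging is that even when $r>0$ the minimum could a priori coincide with $\pi/2$, but this forces $r=0$, so the two alternatives of the statement are in fact mutually exclusive, reinforcing that the uniqueness claim is sharp whenever it applies.
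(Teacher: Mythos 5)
Your proof is correct, and it takes a genuinely different route from the paper's. The paper argues geometrically through the Hopf submersion $\pi\colon S^3(1)\to S^2(1/2)$: either the images of the two orbits in $S^2(1/2)$ are at distance $\pi/2$, or they are joined by a unique minimizing geodesic $\bar\gamma$, and then the unique horizontal lift of $\bar\gamma$ starting at $v$ forces $w_0$ (and hence $t$) to be unique. You instead linearize the problem by expressing the spherical distance via the real part of the Hermitian inner product, reducing the whole lemma to the observation that $s\mapsto r\cos(\theta-s)$ is maximized at a unique point of $S^1$ when $r>0$, and is identically zero when $r=0$. The dichotomy $r=0$ versus $r>0$ matches the paper's dichotomy of whether the orbit images are at distance $\pi/2$, since the Hopf distance between $\pi(v)$ and $\pi(w)$ is $\arccos(2|\langle v,w\rangle_{\ccc}|^2-1)/2$. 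Your approach is more elementary and self-contained, avoiding any appeal to the Riemannian-submersion machinery; the paper's is more structural and would adapt to other isometric circle actions on spheres. One cosmetic remark: the displayed line reading ``$\dist(v,tw)=\arccos(r)<\pi/2=\dist(v,t'w)$ at no other critical point'' is garbled as written (and the sole other critical point of $s\mapsto r\cos(\theta-s)$ is the antipodal minimizer, not a $\pi/2$ value); the substance you need is only that $r\cos(\theta-s)$ attains its maximum at a unique $s\pmod{2\pi}$, which you have already established.
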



\begin{proof}

 Let $\pi: S^3(1)\rightarrow S^2(\frac{1}{2})$ be the Hopf map. The Riemannian submersion metric on $S^3(1)/S^1$ is isometric to $S^2(\frac{1}{2})\subset \rrr^3$. The images of the orbits $S^1(v), S^1(w)$ in $S^3(1)/S^1$ are either separated by $\p2$ or they are joined by a unique minimizing geodesic segment $\bar{\gamma}\subset S^3(1)/S^1$ with $\textrm{Length}(\bar{\gamma})=\dist (S^1(v), S^1(w))$. If $w_0\in S^1(w)$ satisfies 
\[
\dist(v, w_0)=\dist (\pi(S^1(v)), \pi(S^1(w)))=\dist (S^1(v), S^1(w)),
\]
 and $\gamma$ is a minimizing geodesic segment from $v$ to $w_0$ in $S^3(1)$, then $\gamma$ projects to the minimizing curve $\bar{\gamma}$. This implies that $\gamma$ is the unique horizontal lift of $\bar{\gamma}$ starting at $v$ and $w_0$ is unique.

\end{proof}


\begin{proof}[Proof of theorem~\ref{T:4-circles}] 

We will assume that there are at least $5$ isolated circle orbits $\{N_i\}_{i=1}^5$ and will obtain a contradiction. For each $1\leq i\leq 5$, let $p_i\in N_i$ and observe that 
for each pair of points $p_i\in N_i$, $p_j\in N_j$, $i\neq j$, $\mathrm{Dir}_{p_i}(p_j)$ is a subset of the unit normal sphere $S^3\subset T_{p_i}N_i^{\perp}$. 
We will now show that when $\alpha_{ijk}=\pi/2$, the set $\textrm{Dir}_{p_j}(p_k)$ cannot be a single $S^1$ orbit
in the unit normal sphere $S^3\subset T_{p_i}N_{i}^{\perp}$, in contradiction with lemma~\ref{L:5_circle_orbits}.

Assume after relabeling points that $\alpha_{123}=\pi/2$.
Let $\gamma_{12}, \gamma_{13}$ be minimal normal geodesics from $p_1$ to $p_2$ and $p_1$ to $p_3$, respectively. By lemma~\ref{L:5_circle_orbits}, 
$$\angle_{p_1}(\gamma'_{12}(0), \gamma'_{13}(0))=\pi/2.$$
 Now since $\alpha_{123} +\alpha_{213} + \alpha_{312}=\pi$, it follows by the discussion of the equality case in the proof of Toponogov's theorem (cf. \cite{CE}) that 
there exists a flat, totally geodesic triangular surface $\bigtriangleup^2\subset M^5$
with geodesic edges $\gamma_{12}, \gamma_{13}$ and $\eta$, where $\gamma_{23}$ is a minimal geodesic from $p_2$ to $p_3$.

Now, if we replace $\gamma_{13}$ with $t\gamma_{13}$, where $t\in S^1$, we obtain another flat, totally geodesic triangular surface $\bigtriangleup^2_t\subset M^5$ with geodesic edges $\gamma_{12}, t\gamma_{13}$ and $\eta_t$, where $\eta_t$ is a minimal geodesic from $p_2$ to $tp_3\in N_3$.
In particular, 
$$\angle_{p_2}(\gamma_2, \eta_t)=\angle_{p_2}(\gamma_2,  \eta)=\alpha_{213}<\p2.$$ Then, by lemma~\ref{L:5_circle_orbits_B}, there is a unique $t_0\in S^1$ such that 
\linebreak $\angle_{p_2}(\gamma_2, \eta_{t_0})=\angle_{p_2}(p_1, p_3)$. It then follows that, for this $t_0$, $\eta_{t_0}=\gamma_{23}$ and thus
$\bigtriangleup^2_t=\bigtriangleup^2$. Hence $t=e$ and we have a contradiction.

\end{proof}


\begin{corollary} 
Let $M^{n+3}$ be a closed, non-negatively curved manifold with an isometric  $T^{n}$ action. Suppose that $M^*=S^3$ and that there are isolated $T^{n-1}$ orbits. Then there are at most four such isolated $T^{n-1}$ orbits. In particular, if $n\geq 7$ then there are none.
\end{corollary}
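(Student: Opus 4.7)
The plan is to repeat the proof of Theorem~\ref{T:4-circles} essentially verbatim to obtain the upper bound of four isolated $T^{n-1}$ orbits, and then combine this with Proposition~\ref{p:Tn-1orbits} and Corollary~\ref{c:finiteisotropy} to deduce the second claim for $n\geq 7$.

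First I would observe that at each isolated $T^{n-1}$ orbit $N_i$, the local geometric picture coincides with that in the proof of Theorem~\ref{T:4-circles}: the isotropy at any point $p_i\in N_i$ has identity component $S^1$, the slice is four-dimensional, and the unit normal sphere is therefore $S^3$. Since $\bar{p}_i$ is an isolated singular point of the three-dimensional orbit space $M^{\ast}=S^3$, the space of directions $\Sigma_{\bar{p}_i}M^{\ast}$ is two-dimensional, so by Lemma~\ref{l:Lipschitz} it is either isometric to $S^2(1/2)$ (the Hopf quotient) or to a thin $S^2(1/2)$ (an almost free quotient).

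Given this local description, the proof of Theorem~\ref{T:4-circles} transfers without change. The Extent Lemma applied to the spaces of directions at any five hypothetical isolated $T^{n-1}$ orbit points yields the tetrahedral angle rigidity of Lemma~\ref{L:angle_bound}. Lemma~\ref{L:5_circle_orbits} then forces each slice representation to be the Hopf action and forces the sets $\mathrm{Dir}_{p_i}(p_j)$ to come in mutually orthogonal pairs of single Hopf orbits; the Toponogov equality argument, together with Lemma~\ref{L:5_circle_orbits_B}, then produces the desired contradiction. None of these steps uses the ambient dimension beyond the structure of the normal $S^3$ and its effective circle quotient, so at most four isolated $T^{n-1}$ orbits can exist.

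For the ``in particular'' statement, suppose $n\geq 7$ and that at least one isolated $T^{n-1}$ orbit exists. Then Proposition~\ref{p:Tn-1orbits}, extended by Corollary~\ref{c:finiteisotropy} to account for any finite isotropy groups, forces the total number of isolated $T^{n-1}$ orbits to be at least $n+1\geq 8$, contradicting the bound of four just established; so no isolated $T^{n-1}$ orbit can occur when $n\geq 7$. The main obstacle is verifying that the Toponogov rigidity step of Theorem~\ref{T:4-circles}, in particular the flat totally geodesic triangle produced by the equality case, depends only on the intrinsic geometry of $M$ and the effective circle action on each normal $S^3$; once this is verified the argument is mechanical.
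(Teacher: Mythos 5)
Your proposal is correct and follows essentially the same route as the paper: the authors likewise simply invoke the proof of Theorem~\ref{T:4-circles} (observing that the normal sphere at a point on an isolated $T^{n-1}$ orbit is again $S^3$ with an effective almost-free circle action, so all the extent and Toponogov rigidity arguments carry over unchanged) and then apply Proposition~\ref{p:Tn-1orbits} (together with Corollary~\ref{c:finiteisotropy}) to obtain the lower bound $n+1$, which exceeds $4$ when $n\geq 7$. Your added remark explaining \emph{why} the proof of Theorem~\ref{T:4-circles} is dimension-independent is a useful clarification but does not change the argument.
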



\begin{proof}
The first result follows directly from the proof of theorem~\ref{T:4-circles}. The second result follows by proposition~\ref{p:Tn-1orbits}.
\end{proof}

\subsection{Possible components with finite isotropy} 
\label{SS:component_finite_iso}


The following lemma, easily generalized from Rong~\cite{R}, allows us to calculate the 
Betti numbers with $\zzz_p$ coefficients of $M^5$.  
\begin{lemma}\label{l:H2a_finite_extension}
\label{p:R}
Suppose $T^2$ acts isometrically on $M^5$, a closed, simply-con\-nected $5$-manifold. If there are exactly 3 isolated circle orbits, then $H_2(M^5)$ has trivial free rank. If there are exactly $4$ isolated circle orbits, then $H_2(M^5)$ has free rank equal to $1$.
\end{lemma}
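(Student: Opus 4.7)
The plan is to extend the argument from the proof of Proposition~\ref{p:Tn-1orbits} beyond $\pi_2$ of the complement of the isolated singular orbits to recover $H_2(M^5)$ itself. Write $r+1$ for the number of isolated circle orbits, so $r\in\{2,3\}$. If the action has any finite isotropy, first pass to the finite quotient $M^5/\Gamma$ as in Corollary~\ref{c:finiteisotropy}; this yields a simply-connected closed topological $5$-manifold with a $T^2$-action having the same isolated-orbit structure but no exceptional isotropy, and the claim for $M^5$ then follows from the claim for the quotient by naturality of the homotopy exact sequence. Hence I may assume the only non-principal orbits are the isolated circles.

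\textbf{Step 1 (topology of the complement).} Let $M_0$ be $M^5$ minus small $T^2$-invariant open tubular neighborhoods of each isolated singular orbit. The normal bundle of such an orbit is an oriented rank-$4$ vector bundle over $S^1$, hence trivial (since $BSO(4)$ is $1$-connected because $SO(4)$ is connected), so each removed neighborhood is diffeomorphic to $D^4\times S^1$ and $\partial M_0$ consists of $r+1$ copies of $S^3\times S^1$. As in the proof of Proposition~\ref{p:Tn-1orbits}, $M_0$ is simply-connected, $H_2(M_0^*)\cong\zzz^r$, and the homotopy long exact sequence of the principal fibration $T^2\to M_0\to M_0^*$ reduces to
\begin{equation*}
0\to \pi_2(M_0)\to \zzz^r\to \zzz^2\to 0.
\end{equation*}
Thus $\pi_2(M_0)$ is free abelian of rank $r-2$, and by Hurewicz $H_2(M_0)\cong\pi_2(M_0)$, which is $0$ if $r=2$ and $\zzz$ if $r=3$.

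\textbf{Step 2 (from $M_0$ to $M^5$).} Since each removed neighborhood is $D^4\times S^1$, excision together with the standard identification $(X\times D^n)/(X\times S^{n-1})\simeq X_+\wedge S^n$ gives
\begin{equation*}
M^5/M_0\;\simeq\;\bigvee_{i=1}^{r+1}\bigl((S^1)_+\wedge S^4\bigr)\;\simeq\;\bigvee_{i=1}^{r+1}(S^4\vee S^5).
\end{equation*}
In particular $H_n(M^5,M_0)=0$ for $n\le 3$, and the long exact sequence of the pair yields
\begin{equation*}
0=H_3(M^5,M_0)\to H_2(M_0)\to H_2(M^5)\to H_2(M^5,M_0)=0,
\end{equation*}
so $H_2(M^5)\cong H_2(M_0)$, proving both claims.

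The main obstacle is bookkeeping in the finite-isotropy case: one must check that the passage to $M^5/\Gamma$ preserves the relevant free rank, which reduces to verifying that the quotient inherits the isolated-orbit structure of $M^5$ (Corollary~\ref{c:finiteisotropy}). The remaining technical point is the triviality of the normal bundles to the singular circles, a direct consequence of $\pi_1(BSO(4))=0$; after these reductions the computation is a clean application of Hurewicz and the long exact sequence of the pair.
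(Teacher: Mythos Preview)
The paper does not actually prove this lemma; it only remarks that it is ``easily generalized from Rong~\cite{R}''. Your argument is precisely the intended generalization of Proposition~\ref{p:Tn-1orbits}, and Steps~1 and~2 are carried out correctly: the relative groups $H_k(M^5,M_0)$ vanish for $k\le 3$, so $H_2(M^5)\cong H_2(M_0)$, and in the absence of finite isotropy the fibration $T^2\to M_0\to M_0^*$ together with Hurewicz gives $H_2(M_0)\cong\zzz^{r-2}$.

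The one place that needs tightening is the reduction in the finite-isotropy case. The phrase ``by naturality of the homotopy exact sequence'' does not justify why the free rank of $H_2(M^5)$ agrees with that of $H_2(M^5/\Gamma)$: there is no map of principal fibrations relating the two that would do this. What you want is the transfer. Every element of $\Gamma$ lies in the connected group $T^2$ and is therefore isotopic to the identity, so $\Gamma$ acts trivially on $H_*(M^5;\qqq)$; the transfer then gives $H_*(M^5/\Gamma;\qqq)\cong H_*(M^5;\qqq)^{\Gamma}=H_*(M^5;\qqq)$, and in particular the second Betti numbers---hence the free ranks of $H_2$---coincide. A minor related point: once you are working on the \emph{topological} manifold $M^5/\Gamma$, your appeal to $\pi_1(BSO(4))=0$ for the triviality of the normal bundle is no longer literally available; instead invoke the slice theorem for the (now finite-isotropy-free) $T^2/\Gamma$-action to see that a tubular neighbourhood of each isolated circle orbit is a linear $D^4$-bundle over $S^1$, which is then trivial for the same reason. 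With these two adjustments the proof is complete.
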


We will now show that a weighted graph containing a tree with a vertex of degree three, that is,  a weighted graph containing a weighted claw or a weighted tree, may occur only when there are exactly $3$ isolated circle orbits. With this result, we may then conclude that  neither $\Kl\times S^1$ nor $A$ can never occur as the fixed point set of a finite group.


\begin{proposition}
\label{p:nonorientable}
Let $T^2$ act isometrically on $M^5$, a closed, simply-connected, non-negatively curved $5$-manifold.  If $M^*=S^3$ and there exists a non-orientable $3$-manifold $F^3$ fixed by a $\zzz_2$ subgroup, then the projection of $F^3$ in $M^*$  must belong to a weighted claw and there can be no other singular points in $M^*$ corresponding to an isolated circle orbit, besides the external vertices of the claw.
\end{proposition}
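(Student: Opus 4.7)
The plan is to proceed in three steps: first, show that $F^3/T^2$ is one arc of a weighted claw; second, rule out the possibility that this claw is part of a larger weighted tree with multiple $\zzz_2\times\zzz_2$ central vertices; third, rule out the presence of any isolated circle orbit outside this claw.

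The first step uses Subsection~\ref{SS:3MT2}: the non-orientability of $F^3$ restricts it to one of $\rrr P^2\times S^1$, $S^2\tilde{\times}S^1$, $\Kl\times S^1$, or $A$, each of which has at least one exceptional orbit whose full $M^5$-isotropy is $\zzz_2\times\zzz_2$. By Proposition~\ref{p:singsets}(3)-(4), the corresponding endpoint of $F^3/T^2$ is a degree-three vertex where three arcs meet, each the projection of a 3-manifold fixed by one of the three distinct $\zzz_2$-subgroups of $\zzz_2\times\zzz_2$. Hence $F^3/T^2$ sits as one arc of a weighted claw.

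For the remaining two steps, I would observe that if any arc of the claw is the projection of $\Kl\times S^1$ or $A$ (both of whose arc endpoints are $\zzz_2\times\zzz_2$), or if $F^3$ itself is of this type, then the weighted graph extends to a tree with $I\ge 2$ trivalent central vertices; a direct handshake count (sum of degrees equals twice the number of edges in a tree) gives exactly $I+2$ leaves, each an isolated circle orbit. Similarly, the pure claw already contributes three isolated circle orbits, and any additional such orbit elsewhere in $M^*$ would bring the total to four. In both of these dangerous configurations, the total is exactly four, saturating the bound of Theorem~\ref{T:4-circles}. I would then invoke the rigidity of Lemma~\ref{L:angle_bound}: four isolated circle orbits force each $\Sigma_{\bar p_i}$ to coincide with $S^2(1/2)$ (Hopf isotropy representation) and the four points to form a rigid tetrahedron with angle sums $\pi$ at every vertex and on every face. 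Combining this with the local angular structure at a $\zzz_2\times\zzz_2$ vertex $\bar v$—the three cone points of $\Sigma_{\bar v}\cong S^2/(\zzz_2\times\zzz_2)$ lie at pairwise distance $\pi/2$, so the three arcs at $\bar v$ meet pairwise at right angles—and applying Toponogov comparison to the geodesic triangles $\bar p_i\bar v\bar p_j$ with $\bar p_i,\bar p_j$ adjacent to $\bar v$ should force an incompatibility with the tetrahedral rigidity.

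The principal obstacle is making this final geometric contradiction precise: extracting the right inequality from the interplay between the Hopf-isotropy tetrahedron forced by Lemma~\ref{L:angle_bound} and the $\pi/2$ angular structure imposed at each $\zzz_2\times\zzz_2$ vertex. Everything else is bookkeeping based on Proposition~\ref{p:singsets}, the classification in Subsection~\ref{SS:3MT2}, and the cap of four isolated circle orbits from Theorem~\ref{T:4-circles}.
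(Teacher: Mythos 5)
Your opening step (using Subsection~\ref{SS:3MT2} and Proposition~\ref{p:singsets} to show that $F^3/T^2$ sits as one arc of a weighted claw, and the handshake count showing that any of the dangerous configurations would have exactly four isolated circle orbits) matches the setup of the paper's proof, and the tools you name---Theorem~\ref{T:4-circles}, Lemma~\ref{L:angle_bound}, the $\pi/2$ angular structure at the $\zzz_2\times\zzz_2$ center, Toponogov rigidity---are indeed the ones the paper uses. However, there are two real problems.

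First, there is a mathematical error: you assert that ``four isolated circle orbits force each $\Sigma_{\bar p_i}$ to coincide with $S^2(1/2)$ (Hopf isotropy representation).'' That conclusion is the content of Lemma~\ref{L:5_circle_orbits}, which requires \emph{five} isolated circle orbits, and Theorem~\ref{T:4-circles} says five can never occur. With only four isolated circle orbits, Lemma~\ref{L:angle_bound} gives the tetrahedral rigidity (angle sums $\pi$ at each vertex and face of $T_{ijkl}$), but does \emph{not} determine the isotropy or pin down $\Sigma_{\bar p_i}$ to be $S^2(1/2)$. Worse, the paper's argument actually exploits the \emph{opposite} fact: the three external vertices of the claw are joined to the center by $\zzz_2$ arcs, so their isotropy groups contain a nontrivial finite factor, forcing their spaces of directions to be \emph{strictly thinner} than $S^2(1/2)$. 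It is precisely this thinness, combined with the tetrahedral rigidity of $T_{2345}$, that forces a $\pi/2$ angle at each of $\bar p_2$, $\bar p_3$, $\bar p_4$ and, with the maximal remaining angles of $\pi/4$, produces the angle bookkeeping that contradicts Toponogov for the tetrahedron $T_{1345}$. So the inference you rely on runs in the wrong direction.

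Second, and you say so yourself, the final contradiction is the crux of the proposition and you leave it unfinished (``should force an incompatibility''). The paper gets the contradiction by choosing coordinates so that $\alpha_{123}=\alpha_{124}=\alpha_{134}=\pi/2$ at the center, setting $\alpha_{135}+\alpha_{145}=\pi/2$, deducing $\alpha_{125}=\pi/2$, then comparing the rigid tetrahedron $T_{2345}$ with the Toponogov inequalities in $T_{1345}$; substituting the known $\pi/4$ and $\pi/2$ values into the triangle inequalities for the triangles $\bar p_1\bar p_3\bar p_5$ and $\bar p_1\bar p_4\bar p_5$ yields that $\alpha_{135}\geq\pi/2$ or $\alpha_{145}\geq\pi/2$ together with the other being $\geq\pi/4$, contradicting $\alpha_{135}+\alpha_{145}=\pi/2$. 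That explicit computation is the content of the proposition; without it, the proposal is a plausible outline but not a proof.
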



\begin{proof}

Let $W$ be the weighted graph corresponding to the set of orbits with non-trivial isotropy in $M^*$. There are two cases we must exclude. The first case is where $W$ contains a weighted claw as a subgraph and a vertex of degree $0$, $1$ or $2$ (see, for example, figure~\ref{F:claw_cycle}). The second case is when $W$ contains a weighted tree as a subgraph (see, for example, figure~\ref{F:tree_cycle}).


\begin{figure}
\centering
\includegraphics[scale=0.7]{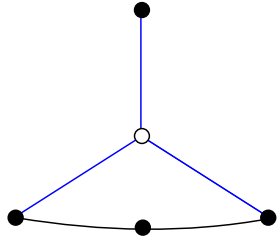}
\caption{Weighted graph containing a claw. The solid vertices correspond to isolated circle orbits. The vertex of degree $3$ corresponds to an exceptional orbit with isotropy $\zzz_2\times\zzz_2$.}
\label{F:claw_cycle}
\end{figure}


\begin{figure}
\centering
\includegraphics[scale=0.7]{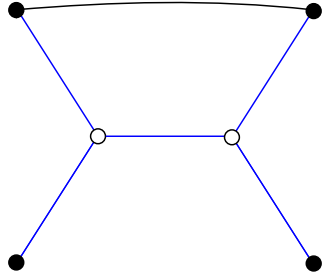}
\caption{Weighted graph containing a tree. The solid vertices correspond to isolated circle orbits. The two vertices of degree $3$ correspond to  exceptional orbits with isotropy $\zzz_2\times\zzz_2$.}
\label{F:tree_cycle}
\end{figure}

We begin with the first case.
Let $\bar{p}_1$ denote the center point in $M^*$ of the weighted claw, 
that is, whose space of directions $\Sigma_{\bar{p}_1}$ is the double right-angled spherical triangle $S^2/(\zzz_2\times \zzz_2)$, and let $\bar{p}_i$, $i=2,3,4$, denote the points in $M^*$ corresponding to the vertices of the weighted claw, each of which corresponds to an orbit with $T^1$ or $T^1\times\zzz_2$ isotropy.
We note that the space of directions for each of these external vertices is either an 
$S^2(1/2)/\zzz_2$, that is a ``thin" $2$-sphere of diameter $\pi/2$ or a possibly thinner $2$-sphere of diameter $\pi/2$. If there is a fourth singular point $\bar{p}_5$ corresponding to an isolated circle orbit in $M^5$, then $\Sigma_{\bar{p}_5}$ is either an $S^2(1/2)$ or a  ``thin" $S^2(1/2)$. 
Since  $S^2(1/2)$ is the ``largest" of these spaces of directions (cf. \ref{l:Lipschitz}), we will assume that $\Sigma_{\bar{p}_5}=S^2(1/2)$.

It is clear that in $\Sigma_{\bar{p}_1}$ the vertices of the spherical triangle correspond to the geodesic directions to the points $\bar{p}_2, \bar{p}_3$ and $\bar{p}_4$, and consequently $\alpha_{123}=\alpha_{124}=\alpha_{134}=\pi/2$. Without loss of generality, we will assume that $\alpha_{135}+\alpha_{145}=\alpha +\beta=\pi/2$, in which case  it follows that $\alpha_{125}=\pi/2$.

Now, by lemma \ref{L:angle_bound}, the tetrahedron $T_{2345}$ is rigid, in the sense that the angles of every face sum to $\pi$ and the angles at every vertex sum to $\pi$. In particular, because of this rigidity and because each of the points $\bar{p}_2, \bar{p}_3$ and $\bar{p}_4$  has space of directions  a thin $S^2(1/2)$, it follows that at every one of the vertices $\bar{p}_2, \bar{p}_3$ and $\bar{p}_4$ 
of $T_{2345}$ there will be an angle of $\pi/2$. 
Further, the maximal configuration for the spaces of directions of the points $\bar{p}_2, \bar{p}_3$ and $\bar{p}_4$ will be where the remaining angles at each vertex in $T_{2345}$ are all $\pi/4$, that is, 
$\alpha_{j1k}=\pi/4$ for for all $j\in \{2, 3, 4\}$ and $k\in \{2, 3, 4, 5\}$, where $j\neq k$, whereas, $\alpha_{51j}$ will be equal to $\pi/2$ for one value of $j\in \{2, 3, 4\}$ and for the remaining values it will be equal to $\pi/4$.
Without loss of generality we may choose specific values for all angles of the form $\alpha_{2jk}, j, k\in \{1, 3, 4, 5\}$. Once these choices are determined, the rigidity of $T_{2345}$ will determine the remaining angles.

It now follows by Toponogov's theorem that the angle sum of any triangle in any tetrahedron formed by these five singular points must be greater than or equal to $\pi$. When we consider the tetrahedron $T_{1345}$, we see that when we substitute all the 
known values for the angles the lower bound on the sum of the angles for any triangle 
forces the following two inequalities:

\bdm
\begin{matrix}
\alpha_{513} +\alpha_{315} +\alpha\geq \pi,\\
\alpha_{514} +\alpha_{415} +\beta\geq \pi.\\
\end{matrix}
\edm

As we saw previously, $\alpha_{415}=\alpha_{315}=\pi/4$ and one of 
$\alpha_{513}$ or $\alpha_{514}$ is equal to $\pi/2$ and the other is equal to $\pi/4$.
In particular this tell us that one of the angles $\alpha$ or $\beta$ is greater than or equal to $\pi/2$ and the other is greater than or equal to $\pi/4$. Since $\alpha +\beta=\pi/2$ this immediately gives us a contradiction and thus this case cannot occur.

For the second case, where the weighted graph contains a weighted tree, we observe that the  addition of the singular point $\bar{p}_6$, corresponding to a $T^2/(\zzz_2\times \zzz_2)$ orbit will produce an analgous contradiction and thus this case cannot occur either.

\end{proof}

We summarize the results of this subsection in the following theorem:


\begin{theorem}\label{t:3dimfiniteisotropy}
Let $T^{2}$ act isometrically on $M^5$, a closed, simply-connected, non-negatively curved $5$-manifold. If $M^*=S^3$,  then the fixed point set components of finite cyclic isotropy (if they exist) are:
\begin{itemize}
\item[(1)] $S^3$, $\lpq$, $S^2\times S^1$, $\rrr P^2\times S^1$ or $S^2\tilde{\times} S^1$ when there are three isolated circle orbits;\\

\item[(2)]  $S^3$, $\lpq$ or $S^2\times S^1$ when there are four isolated circle orbits.
\end{itemize}
\end{theorem}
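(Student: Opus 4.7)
The plan is to combine the structural results already established in Sections~4 and~5 with the classification from Subsection~\ref{SS:3MT2}, and then sort through the eight possibilities for a closed $3$-manifold with a cohomogeneity one $T^2$ action. By Proposition~\ref{p:singsets}(2), any fixed point set component $F^3$ of a finite cyclic isotropy group admits an (almost) effective cohomogeneity one $T^2$ action, and Subsection~\ref{SS:3MT2} gives the complete list: $S^3$, $\lpq$, $S^2\times S^1$, $\rrr P^2\times S^1$, $T^3$, $S^2\tilde{\times} S^1$, $\Kl\times S^1$ and $A$. The task is thus to decide which of these eight manifolds can actually appear, and to correlate the answer with the number of isolated circle orbits (which, by Proposition~\ref{p:Tn-1orbits} and Theorem~\ref{T:4-circles}, must be either $3$ or $4$ when $M^*=S^3$).

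First I would eliminate $T^3$: a cohomogeneity one $T^2$ action on $T^3$ is free, so its orbit space is a circle consisting entirely of regular points. Projecting $T^3$ into $M^*$ would produce a simple closed curve of constant finite cyclic isotropy $\zzz_k$, which is forbidden by the Montgomery--Yang result cited just before Proposition~\ref{p:singsets}. Next, by Chang--Skjelbred, any fixed point component of $\zzz_k$ with $k\neq 2$ is orientable and of even codimension, so the four non-orientable manifolds $\rrr P^2\times S^1$, $S^2\tilde{\times} S^1$, $\Kl\times S^1$ and $A$ can occur only as components of $\Fix(M^5;\zzz_2)$.

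The main work is done by Proposition~\ref{p:nonorientable}, which I would apply directly: whenever such a non-orientable component exists, its image in $M^*$ must sit as one arc of a weighted claw whose three external vertices are the only isolated circle orbits of the action, forcing exactly three such orbits. The orbit spaces of $\rrr P^2\times S^1$ and $S^2\tilde{\times} S^1$ are arcs with one exceptional and one singular orbit, so each fits as an arc of a claw. By contrast, the orbit spaces of $\Kl\times S^1$ and of $A$ are arcs with two exceptional endpoints, so each would contribute an edge connecting two vertices of degree three, i.e., the weighted tree configuration ruled out at the end of the proof of Proposition~\ref{p:nonorientable}. Consequently $\Kl\times S^1$ and $A$ can never appear.

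Putting the pieces together gives both conclusions. For case~(1), with three isolated circle orbits, the surviving possibilities are the three orientable $3$-manifolds $S^3$, $\lpq$, $S^2\times S^1$ together with the two non-orientable manifolds $\rrr P^2\times S^1$ and $S^2\tilde{\times} S^1$ that can appear as arcs of a weighted claw. For case~(2), with four isolated circle orbits, the contrapositive of Proposition~\ref{p:nonorientable} forbids any non-orientable $\zzz_2$-fixed component, leaving only $S^3$, $\lpq$ and $S^2\times S^1$. The only potential obstacle in executing this plan is a careful bookkeeping argument verifying that the orbit data of $\Kl\times S^1$ and $A$ really do force a weighted tree (rather than a claw) as their image in $M^*$; this is the step where one must check that each exceptional endpoint of the orbit arc projects to a vertex of degree three, which follows from Proposition~\ref{p:3dimfixed}(3) applied at each exceptional orbit.
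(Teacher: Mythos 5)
Your proposal is correct and mirrors the paper's own argument: Theorem~\ref{t:3dimfiniteisotropy} is stated as a summary of the preceding discussion (ruling out $T^3$ via Montgomery--Yang, using Chang--Skjelbred to restrict non-orientable components to $\zzz_2$-isotropy, and invoking Proposition~\ref{p:nonorientable} to exclude the weighted-tree configuration and to force exactly three isolated circle orbits whenever a non-orientable component appears). Your bookkeeping about the orbit structure of $\rrr P^2\times S^1$, $S^2\tilde{\times} S^1$ versus $\Kl\times S^1$, $A$ (one exceptional endpoint vs.\ two) is precisely the step the paper relies on in Section~4.3, so no new ideas are needed beyond what you already cite.
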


We recall the following theorem of Bredon~\cite{Br}:



\begin{theorem}
\label{t:cohominequalities}
Suppose that $p$ is a prime and that $G=\zzz_p$ acts on the finite-dimensional space $X$ with $B\subset X$ closed and invariant. Suppose that $G$ acts trivially on ${H}^*(X, B;\zzz)$ and let $F=\Fix(X;\zzz_p)$. Then, for any $k\geq 0$, we have
\bdm
\sum_{i\geq 0} \rk {H}^{k+2i}(F, F\cap B; \zzz_p)\leq \sum_{i\geq 0} \rk {H}^{k+2i}(X, B; \zzz_p).
\edm
\end{theorem}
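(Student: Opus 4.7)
The plan is to prove this via Smith theory, using the Borel construction to reduce everything to a Poincar\'e-series comparison. Set $X_G = EG\times_G X$ and $B_G = EG\times_G B$, and write $H^*_G(X,B;\zzz_p) := H^*(X_G,B_G;\zzz_p)$ for the equivariant cohomology of the pair. Since $G=\zzz_p$ acts trivially on $H^*(X,B;\zzz)$, by the universal coefficient theorem it acts trivially on $H^*(X,B;\zzz_p)$ as well, so the Leray--Serre spectral sequence of the fibration $(X,B)\hookrightarrow(X_G,B_G)\to BG$ has $E_2$ page
\bdm
E_2^{p,q} \;=\; H^p(BG;\zzz_p)\otimes H^q(X,B;\zzz_p).
\edm

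Next I would invoke the Borel localization theorem. There is a polynomial generator $t\in H^*(BG;\zzz_p)$ (of degree $2$ for $p$ odd, of degree $1$ for $p=2$) such that restriction to the fixed set induces an isomorphism
\bdm
H^*_G(X,B;\zzz_p)[t^{-1}] \;\xrightarrow{\;\cong\;}\; H^*_G(F,F\cap B;\zzz_p)[t^{-1}].
\edm
Because $G$ acts trivially on $F$, the right-hand side collapses to $H^*(BG;\zzz_p)[t^{-1}]\otimes H^*(F,F\cap B;\zzz_p)$, which is a free module over the Laurent ring $\zzz_p[t,t^{-1}]$ (for $p$ odd, tensored further with the exterior algebra on $s$ of degree $1$). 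In particular, the rank of the degree-$n$ piece of the localized equivariant cohomology, for $n$ large and of fixed parity $n\equiv k\pmod 2$, equals $\sum_{i\ge 0}\rk H^{k+2i}(F,F\cap B;\zzz_p)$.

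Finally I would produce the upper bound from the spectral sequence. The abutment $H^*_G(X,B;\zzz_p)$ is a subquotient of $E_2 = H^*(BG;\zzz_p)\otimes H^*(X,B;\zzz_p)$, so its rank in each fixed degree $n\equiv k\pmod 2$ (again for $n$ sufficiently large) is bounded above by $\sum_{i\ge 0}\rk H^{k+2i}(X,B;\zzz_p)$. Combining this bound with the localization isomorphism yields exactly the claimed inequality.

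The main obstacle will be the parity bookkeeping, which depends on the prime. For $p$ odd the localization parameter $t$ has degree $2$ and the exterior generator $s$ of degree $1$ in $H^*(BG;\zzz_p)$ shuffles the two parities; one must check that after passing to $E_\infty$ the contributions in each residue class $\bmod\, 2$ remain separated so that the shifted even sum $\sum_{i\ge 0}H^{k+2i}$ genuinely captures the rank. For $p=2$ the generator $t$ has degree $1$, so no parity distinction survives and the two inequalities for $k=0$ and $k=1$ together reproduce the classical Smith inequality. Once this parity accounting is verified, the theorem follows from the standard Poincar\'e-series argument of Smith theory.
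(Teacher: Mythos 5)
This theorem is quoted, not proved, in the paper: it is cited from Bredon's \emph{Introduction to Compact Transformation Groups}, where it is established by means of the Smith special exact sequences built from the cochain operators $\rho=1-\tau$ and $\bar\rho=1+\tau+\cdots+\tau^{p-1}$, not via Borel equivariant cohomology. So your proposal takes a genuinely different route, and the question is whether it actually delivers the parity-refined inequality in the statement rather than its weaker, unrefined cousin.

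It does not, and the issue you flag at the end is a structural obstruction, not bookkeeping. The key false step is the claim that, in large degree $n$ of fixed parity $k$, the localized module $H^*(BG;\zzz_p)[t^{-1}]\otimes H^*(F,F\cap B;\zzz_p)$ has rank $\sum_{i\geq 0}\rk H^{k+2i}(F,F\cap B;\zzz_p)$. For $p=2$ one has $H^*(BG;\zzz_2)=\zzz_2[t]$ with $|t|=1$, so $\zzz_2[t,t^{-1}]$ is nonzero in \emph{every} integer degree and the degree-$n$ piece has rank $\sum_{q}\rk H^q(F,F\cap B;\zzz_2)$ over all $q$, independent of the parity of $n$. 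For $p$ odd the exterior generator $s$ of degree $1$ plays the same role: $\zzz_p[t,t^{-1}]\otimes\Lambda(s)$ is again nonzero in every integer degree, so the degree-$n$ rank is once more $\sum_q\rk H^q(F,F\cap B;\zzz_p)$ regardless of parity. The identical degree-smearing happens on the $E_2$ page for $(X,B)$, since $H^*(BG;\zzz_p)$ itself is nonzero in every degree. Consequently the comparison you set up can only yield the classical unrefined Floyd inequality $\sum_q\rk H^q(F,F\cap B;\zzz_p)\leq\sum_q\rk H^q(X,B;\zzz_p)$; the parity information is already gone at $E_2$, so there is nothing left to ``verify'' at $E_\infty$. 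You in fact concede exactly this for $p=2$ (``no parity distinction survives''), but the theorem being proved \emph{is} the parity-refined one, and the two unrefined inequalities for $k=0$ and $k=1$ do not imply it. Recovering the statement as written requires an argument that tracks degrees two at a time -- Bredon achieves this by interleaving the $\rho$- and $\bar\rho$-sequences at alternating degrees -- and that finer information cannot be extracted from the Poincar\'e-series count as you have set it up.
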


We observe that any diffeomorphism in $T^2$ is homotopic to the identity, since it is contained in a torus.
Thus we may apply this theorem to the situation at hand to obtain the following corollary:


\begin{corollary}\label{c:orbitconfigs}
Let $T^2$ act isometrically on $M^5$, a closed, simply-connected, non-negatively curved $5$-manifold. If $M^*=S^3$ and the orbit space contains a weighted claw, then $M^5$ is not $S^5$.
\end{corollary}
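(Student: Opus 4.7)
The plan is to apply Theorem~\ref{t:cohominequalities} iteratively to the action of $\zzz_2\times\zzz_2$---the isotropy group at the central vertex of the weighted claw---on $M^5$, and compare $\zzz_2$-cohomology ranks to those of $S^5$.

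First I would observe that the preimage of the central vertex $v_0$ is a single $T^2$-orbit on which every point has isotropy $\zzz_2\times\zzz_2$ (as $T^2$ is abelian), hence is homeomorphic to $T^2/(\zzz_2\times\zzz_2)\cong T^2$ and lies in $\Fix(M^5;\zzz_2\times\zzz_2)$. I would then argue this torus is a connected component of the fixed set: by Proposition~\ref{p:3dimfixed}(3), each of the three $\zzz_2$-subgroups of $\zzz_2\times\zzz_2$ must fix a $1$-dimensional subspace of the $3$-dimensional normal slice at a preimage of $v_0$, which forces the slice representation to decompose as the sum of the three distinct nontrivial real characters of $\zzz_2\times\zzz_2$. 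Consequently the $\zzz_2\times\zzz_2$-fixed subspace of the slice is trivial, so by the slice theorem no higher-dimensional fixed component extends the central torus.

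Next, letting $H_1,H_2$ be two distinct $\zzz_2$ subgroups of $\zzz_2\times\zzz_2$ with $H_1H_2=\zzz_2\times\zzz_2$, I would apply Theorem~\ref{t:cohominequalities} first with $G=H_1$ acting on $M^5$, and then with $G=H_2$ acting on $\Fix(M^5;H_1)$---the latter is legitimate because $H_1$ and $H_2$ commute (so $H_2$ preserves $\Fix(M^5;H_1)$) and, as already observed, any element of $T^2$ acts by diffeomorphisms isotopic to the identity. Chaining the two inequalities yields
\begin{equation*}
\sum_{i\geq 0}\rk H^{k+2i}(\Fix(M^5;\zzz_2\times\zzz_2);\zzz_2)\leq \sum_{i\geq 0}\rk H^{k+2i}(M^5;\zzz_2).
\end{equation*}
If $M^5\cong S^5$, then with $k=0$ the right-hand side equals $1$, while the left-hand side is at least $\rk H^0(T^2;\zzz_2)+\rk H^2(T^2;\zzz_2)=2$ by the first step---a contradiction. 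The principal technical point is ensuring the central torus is a genuine connected component, so that its $\zzz_2$-cohomology contributes as a direct summand in that of the full fixed set; as sketched above, this reduces to analyzing the slice representation via Proposition~\ref{p:3dimfixed}(3).
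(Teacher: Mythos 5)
Your argument is correct, but it follows a different route from the paper's. The paper applies Theorem~\ref{t:cohominequalities} \emph{once}, to a single $\zzz_2$, using the fact (established in the preceding discussion) that when the singular set contains a weighted claw, some component of $\Fix(M^5;\zzz_2)$ is a non-orientable closed $3$-manifold — either $S^2\tilde{\times}S^1$ or $\rrr P^2\times S^1$ — and the $\zzz_2$-Betti numbers of such a $3$-manifold already violate the Bredon inequality against $S^5$. You instead zoom in on the central vertex: its preimage is the orbit $T^2/(\zzz_2\times\zzz_2)\cong T^2$, and the slice representation there (determined by Proposition~\ref{p:3dimfixed}, part (3)) is the sum of the three nontrivial characters of $\zzz_2\times\zzz_2$, so the $\zzz_2\times\zzz_2$-fixed subspace of the normal slice is $\{0\}$; hence this torus is an entire connected component of $\Fix(M^5;\zzz_2\times\zzz_2)$. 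You then iterate Theorem~\ref{t:cohominequalities} over the two generating $\zzz_2$-subgroups — valid because $T^2$ is connected and preserves each component of the intermediate fixed set, so the second $\zzz_2$ acts trivially on its integral cohomology — to get $\sum_i\rk H^{2i}(\Fix(M^5;\zzz_2\times\zzz_2);\zzz_2)\le\sum_i\rk H^{2i}(M^5;\zzz_2)$, which fails for $M^5=S^5$ since the torus alone contributes $2$ on the left. Your version has the advantage of being essentially self-contained: it needs only the slice representation at the $\zzz_2\times\zzz_2$-orbit and the cohomology of $T^2$, rather than the classification of which non-orientable $3$-manifolds can arise as fixed sets of $\zzz_2$-isotropy; the price is the iterated application of the Bredon inequality. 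Both are correct.
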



\begin{proof} 
This follows directly by applying the inequality in theorem~\ref{t:cohominequalities}, observing that if either  $S^2\tilde{\times} S^1$ or $\rrr P^2\times S^1$ is contained in $\Fix(M^5; \zzz_2)$, then  $H_2(M^5)\neq 0$.
\end{proof}



\subsection{Unknottedness of cycles} 
\label{SS:unknot}
We will now analyze the special case where the singular set in the orbit space contains a cycle. Following work of Grove and Wilking \cite{GW,Wi3}, we will show that  this cycle is unknotted in $M^*=S^3$. Recall that the arcs in a cycle correspond to the projection of fixed point sets of finite isotropy. We have the following result.


\begin{theorem} \label{T:unknotted_cycle}
Let $M^5$ be a closed, simply-connected, non-negatively curved $5$-manifold with an isometric $T^2$ action and orbit space  $M^*\simeq S^3$. If the singular set in the orbit space $M^*$ contains a cycle $K^1$, then the following hold:
\begin{itemize}
\item[(1)]  The cycle $K^1$ is the only cycle in the singular set in $M^*$.\\

\item[(2)] If there are four isolated circle orbits, then $K^1$ comprises all of the singular set, i.e., $M^*\setminus K^1$ is smooth.\\ 

\item[(3)] Suppose there are exactly three isolated circle orbits, the cycle $K^1$ contains only two of them, and there are no exceptional orbits of isotropy $\zzz_2\times\zzz_2$. Then the finite isotropy fixing one of the $3$-manifolds corresponding to one of the arcs of the cycle must be $\zzz_2$.\\


 
\item[(4)]  The cycle $K^1$ is unknotted in $M^*$.
 \end{itemize}
\end{theorem}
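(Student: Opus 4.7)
My plan begins by extracting the graph-theoretic structure of $K^1$: by Proposition~\ref{p:nonorientable}, any vertex of isotropy $\zzz_2\times\zzz_2$ is the center of a weighted claw that forbids any further singular structure, and since a claw is a tree, no such vertex can lie on a cycle. The Montgomery--Yang fact that no simple closed curve has finite cyclic isotropy rules out loops, so every cycle has between two and four vertices (by Theorem~\ref{T:4-circles}), all of which are isolated circle orbits. For part (1), a second cycle $K'$ disjoint from $K^1$ would force two vertex-disjoint bigons with four isolated circle orbits in total, and an equivariant Mayer--Vietoris computation of $H_2(M^5)$ using the preimage 3-manifolds from Theorem~\ref{t:3dimfiniteisotropy}(2) would contradict Lemma~\ref{l:H2a_finite_extension}; a cycle $K'$ sharing vertices with $K^1$ produces a theta-subgraph whose extra arc over-contributes to $H_*(M^5;\zzz_p)$ in Bredon's inequality (Theorem~\ref{t:cohominequalities}). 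For (2), a cycle on fewer than four vertices with an extra isolated circle orbit off $K^1$ would require an arc from this extra orbit to $K^1$, producing either a forbidden weighted claw (Proposition~\ref{p:nonorientable}) or a second cycle (contradicting (1)).

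\textbf{Part (3).} Here $K^1$ is a bigon joining two of the three isolated circle orbits, with arcs whose preimage 3-manifolds $F_1,F_2$ are fixed by $\zzz_{k_1},\zzz_{k_2}$, and Lemma~\ref{l:H2a_finite_extension} pins the free rank of $H_2(M^5;\zzz)$ at zero. If both $k_i\geq 3$, then by Chang--Skjelbred both $F_i$ are orientable, and Theorem~\ref{t:3dimfiniteisotropy}(1) restricts them to $S^3$, $L_{p,q}$, $S^2\times S^1$, or $S^2\tilde{\times} S^1$. A Mayer--Vietoris decomposition based on tubular neighborhoods of $F_1$ and $F_2$ would then produce a nonzero free summand in $H_2(M^5;\zzz)$, contradicting Lemma~\ref{l:H2a_finite_extension}. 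Hence at least one $k_i$ must equal $2$.

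\textbf{Part (4): the main obstacle.} For unknottedness I plan to combine the Alexandrov geometry of $M^*$ with the simply-connectedness of $M^5$, in the spirit of Grove--Wilking's techniques for 3-sphere quotients. On the three-dimensional non-negatively curved Alexandrov space $M^*\simeq S^3$, consider the distance function $d_{K^1}$ and the subset $C\subseteq M^*$ at maximal distance from $K^1$. A soul-type argument (in the spirit of the Soul Theorem recalled in Section~\ref{S:1}) should show that $C$ is a totally convex compact subset of dimension at most two and that the gradient flow of $d_{K^1}$ yields a strong deformation retraction of $M^*\setminus K^1$ onto $C$. Lifting to $\pi^{-1}(C)\subseteq M^5$ and using the long exact homotopy sequence of the almost-principal $T^2$-bundle over the smooth part of $M^*\setminus K^1$ together with $\pi_1(M^5)=0$, one expects $C$ to be forced to a smooth circle or a point, so that $\pi_1(M^*\setminus K^1)\cong\zzz$ and the complement of $K^1$ in $S^3$ is a solid torus, the standard characterization of an unknot. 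The hard part will be controlling the detailed structure of $C$ and the retraction under non-negative (rather than strictly positive) curvature, in particular ruling out a two-dimensional $C$ or a knotted circle component using the residual symmetries and Betti-number bookkeeping from parts (1)--(3).
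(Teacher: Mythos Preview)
Your proposal misses the central device of the paper's argument: the \emph{branched cover}. The paper constructs a two-fold branched cover $\kappa:B\to M^*$ with branching locus $K^1$, shows that $B$ is a simply-connected non-negatively curved Alexandrov space, and observes that each isolated circle orbit in $M^*$ lifts in $B$ to points whose space of directions is still no larger than $S^2(1/2)$. The proof of Theorem~\ref{T:4-circles} then bounds the number of such points in $B$ by four. This single counting device handles (1), (2) and (3) simultaneously: a singular point \emph{off} $K^1$ has two preimages in $B$, so a triangle plus an isolated vertex gives $3+2=5$ bad points in $B$; a bigon plus two isolated vertices gives $2+4=6$; and for part~(3), passing instead to a $k$-fold branched cover (with $k=\min(k_1,k_2)$) gives $2+k$ bad points, forcing $k=2$. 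Part~(4) then follows because $B$ is a simply-connected $3$-manifold, hence $S^3$ by Poincar\'e, and the deck $\zzz_2$-action is standard by the equivariant Poincar\'e conjecture of Dinkelbach--Leeb, so $K^1=\Fix(B;\zzz_2)/\zzz_2$ is a round circle in $S^3$.

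Against this, your arguments have concrete gaps. In part~(2) you assert that an isolated circle orbit off $K^1$ ``would require an arc from this extra orbit to $K^1$''; this is false---a degree-zero vertex is a perfectly legal configuration in the weighted graph (indeed, graph~(4) of figure~\ref{F:3_vertices_wgraphs} exhibits exactly this for three orbits), and ruling it out in the four-orbit case is precisely the content of~(2). Your Mayer--Vietoris sketches for (1) and (3) are not decompositions of $M^5$: tubular neighborhoods of the arc preimages do not cover $M^5$ once there is a singular orbit off the cycle, so no computation of $H_2(M^5)$ follows. Finally, your approach to~(4) via the set $C$ at maximal distance from $K^1$ cannot work as stated: the Soul Theorem controls distance from the \emph{boundary} of a non-negatively curved space, not from an arbitrary one-cycle in a closed $S^3$; there is no concavity to exploit, and even if you produced a retraction of $M^*\setminus K^1$ onto a circle $C$, you would still need $C$ itself to be unknotted, which is the original problem again. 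The branched-cover-plus-equivariant-Poincar\'e route is not merely a stylistic alternative---it supplies exactly the rigidity your soul-type argument lacks.
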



\begin{proof} 

We will first prove parts (1) and (2) of the theorem.  Our strategy is the following. For any given weighted graph containing a cycle $K^1$ in $M^*$ we will construct a branched double cover $\kappa: B \rightarrow M^*$ with branching set $K^1$ and show that $B$ is a simply-connected Alexandrov space of non-negative curvature (see lemmas~\ref{L:B_nonneg_alex}, \ref{L:B_space_directions} and \ref{L:B_s_connected} below). By lemma~\ref{L:B_space_directions} below and the proof of theorem \ref{T:4-circles}, the cover  $B$   can contain at most four points projecting down to isolated circle orbits in $M^*$. Parts (1) and (2) of the theorem then follow from the fact that  the weighted  graph $W\subset M^*$ corresponding to the set of singular orbits can contain two cycles only if $W$ has four vertices.




To construct the branched cover, first observe that a generator of $H_1(M^*\setminus K^1;\zzz)=\zzz$ is given by a normal circle to $K^1$. Recall that index two subgroups of $H_1(M^*\setminus K^1;\zzz)=\zzz$ are in one-to-one correspondence with two-fold covers of $M^*\setminus K^1$. Hence there is a unique two-fold cover $B'$ of $M^*\setminus K^1$. Let $B_r(K^1)$ be a tubular neighborhood of $K^1$ in $M^*\simeq S^3$. Observe that $B_r(K^1)$ is a solid torus and therefore $H_1(B_r(K^1)\setminus K^1;\zzz)=\zzz^2$.  It follows that $B_r(K^1)\setminus K^1$ also has a two-fold cover. Now let $B=B'\cup K^1$ so that $\kappa: B\rightarrow M^*$ is a two-fold branched cover, with branching set $K^1$. Further, $B$ admits a $\zzz_2$ action, which is isometric with respect to the metric induced by the orbital distance metric from $M^*$,
with fixed point set $K^1$.


\begin{lemma}
\label{L:B_nonneg_alex}
The space  $B$ is a non-negatively curved Alexandrov space. 
\end{lemma}

\begin{proof}[Proof of lemma~\ref{L:B_nonneg_alex}]



Observe that $B$ is locally isometric to $M^*$ outside of the branching set $K^1$. Let $C_2$ be the union of arcs in $K^1$ with isotropy $\zzz_k$, $k\neq 2$. We have two cases: case one, where the cycle $K^1$ contains all the singular points corresponding to isolated circle orbits, and case two, when there are exactly three isolated circle orbits and of the corresponding singular points only two belong to a cycle.

For case one,  proceeding as in the proof of lemma~2.3 in \cite{GW}, one verifies that the set $B \setminus C_2$ is convex in $B$. The conclusion follows after observing that any geodesic triangle in $B$ is the limit of geodesic triangles in $B \setminus C_2$.

For case two, there are only two graphs that correspond to this case: graph 4 of figure~\ref{F:3_vertices_wgraphs} and graph 2 of figure~\ref{F:claw_wgraphs}.
For graph 4 of figure~\ref{F:3_vertices_wgraphs},  we may proceed as in  case one. Here  we verify that the set $B\setminus\{C_2\cup \{p_1, p_2\}\}$ is convex in $B$, where $p_1$ and $p_2$ are the lifts of the point $p$ corresponding to the isolated circle orbit in $M^*$ that does not belong to the cycle $K^1$. For graph 2 of figure~\ref{F:claw_wgraphs}, let $A_1$ be  the arc in $K^1$ with isotropy $\zzz_k$, $k\neq 2$, and let $A_2$ be the lift in $B$ of the arc in the claw with isotropy $\zzz_2$  not contained in $K^1$.  Observe that $A_2$ is a minimal geodesic between the lifts of the isolated circle orbit not contained in the cycle $K^1$. As above, one verifies that the set $B\setminus { (A_1 \cup A_2)}$ is convex in $B$ and the conclusion follows after observing that geodesics triangles in $B$ are limits of geodesic triangles in $B\setminus { (A_1 \cup A_2)}$.

\end{proof}


\begin{lemma}
\label{L:B_space_directions}
Let $\overline{p}\in K^1\subset M^*$ be a point corresponding to an isolated circle orbit in $M^5$ and consider $\overline{p}$ as a point in $B$. Then $\Sigma_{\overline{p}} B$ is smaller than or equal to  $S^2(1/2)$.
\end{lemma}

\begin{proof}[Proof of lemma~\ref{L:B_space_directions}]
There is a two-fold branched cover $\Sigma_{\overline{p}} B \rightarrow \Sigma_{\overline{p}}M^*$. We know that the space of directions $\Sigma_{\overline{p}} M^*$ is a ``thin" $S^2(1/2)$. We will denote this space by $X_{k, l}$.
Observe that $\Sigma_{\overline{p}} M^*=X_{k, l}$ can be written as the join of a circle, $S^1/\zzz_{kl}$, of diameter  $\pi/kl$ and $S^0$, of diameter $\pi/2$, where the former is the normal space of directions to $K^1$ at the point $\overline{p}$ and  the latter corresponds to the tangent space of directions at $\overline{p}$ of $K^1$ in $M^*$. Since $B$ is a two-fold branched cover of $M^*$ with branching locus $K^1$, the corresponding space of normal directions in $B$ will be of twice the diameter as the space of normal directions in $M^*$. In particular, this means that $\Sigma_{\overline{p}} B$ corresponds to
 the orbifold $X_{2k, l}$ or $X_{k, 2l}$.  Since at least one of $k, l$ is greater than $2$, it follows that this orbifold is smaller than or equal to $S^2(1/2)$, as we saw earlier (cf. lemma~\ref{l:Lipschitz}).
\end{proof}


\begin{lemma}
\label{L:B_s_connected} 
The space $B$ is simply-connected.
\end{lemma}

\begin{proof}[Proof of lemma~\ref{L:B_s_connected}]
We will prove this by contradiction, so assume that  $\pi_1(B)$ contains at least two elements. Observe that $\tilde{B}$, the universal cover of $B$,  is a compact Alexandrov space of non-negative curvature. There are at least three singular points $p_i$ in $B$, corresponding to isolated circle orbits in $M^*$. Therefore, in $\tilde{B}$ there will be at least six points $\tilde{p}_k$ covering these points $M^*$.  By lemma~\ref{L:B_space_directions},  the spaces of directions  $\Sigma_{\tilde{p}_k}\tilde{B}$ are smaller than or equal to $S^2(1/2)$. On the other hand, the Extent lemma implies that there can be at most five such points in $\tilde{B}$, yielding a contradiction.
\end{proof}

Now we prove part (3).  Let $\zzz_k$ and $\zzz_l$ be the finite isotropy groups fixing the two $3$-manifolds corresponding to the two arcs of the cycle $K^1$. Without loss of generality, we may assume that $2\leq k <l$. In this case we may take a $k$-fold branched cover of $M^*$ with branching locus $K^1$. It follows from the proof of theorem \ref{T:4-circles} that $k=2$, since otherwise the branched cover would have more than four singular points.

Finally, we prove part (4). Observe that $B$ is a topological $3$-manifold and, by lemma~\ref{L:B_s_connected}, $B$ is simply connected.  Hence, by the resolution of the Poincar\'e conjecture, $B$ must be homeomorphic to $S^3$. Recall that we have an isometric $\zzz_2$ action on $B$ fixing $K^1$. By the equivariant version of the Poincar\'e conjecture \cite{DiL}, it follows that this action is equivalent to a linear action on a standard $S^3(1)$.
Therefore $\zzz_2\subset SO(4)$. Note that $\zzz_2$ is not equivalent to the action of $-\txt{Id}$, since there is a branching locus, that is a unique circle fixed by the $\zzz_2$ action. Thus, without loss of generality,
\bdm
\zzz_2=\{ \begin{pmatrix}
 -1 &&&\\
 & -1 &&\\
 && 1 &\\
 &&& 1 \\
 \end{pmatrix} \cup \txt{Id}\}.
 \edm
 Therefore $K^1\subset M^*=B/\zzz_2$ is not knotted. This concludes the proof of theorem~\ref{T:unknotted_cycle}.
\end{proof}



\section{Non-negatively curved $5$-manifolds with almost maximal symmetry rank}
\label{s:dim5results}
We can now classify closed, simply-connected, non-negatively curved $5$-man\-ifolds with an isometric $T^2$ action, corresponding to the almost maximal symmetry rank case in dimension $5$.  We summarize our results in the following theorem.


\begin{theorem} \label{T:dim5} Let $M^5$ be a closed, simply-connected $5$-manifold with non-negative curvature admitting an isometric $T^2$ action. Then $M^5$ is diffeomorphic to $S^5$, $\linebreak S^3\times S^2$, $S^3\tilde{\times}S^2$ or $SU(3)/SO(3)$.
\end{theorem}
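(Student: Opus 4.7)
The plan is to split the proof according to the possible topologies of the orbit space $M^{*} = M^5/T^2$, which by Proposition~\ref{p:M*} is one of $S^3$, $D^3$, or $S^2 \times I$.

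I would first dispose of the fixed point homogeneous case, in which $M^{*} \cong D^3$ or $S^2 \times I$ and some circle $T^1 \subset T^2$ has a codimension-two fixed component $N^3$. The Soul Theorem decomposition yields $M^5 = D(N^3) \cup_E D(B)$, where $B$ is the pre-image of the soul of $M^{*}$. The $3$-manifold $N^3$ is closed, non-negatively curved, and inherits an effective action of the quotient circle $T^2/T^1$, so its topology is controlled by Theorem~\ref{T:n3.1}. A van Kampen argument on the soul decomposition, combined with the simple connectivity of $M^5$, reduces $N^3$ to a short list (essentially $S^3$, $S^2\times S^1$, or a lens space); reading off the normal disk bundle data from the slice representations along $N^3$ and $B$ then identifies $M^5$ with one of the four listed manifolds.

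Next I would address the main case $M^{*} \cong S^3$. By Proposition~\ref{p:Tn-1orbits} and Theorem~\ref{T:4-circles} the number of isolated circle orbits is exactly three or four, and Lemma~\ref{p:R} pins down the free rank of $H_2(M^5)$ accordingly. For each subcase I would enumerate all weighted graphs consistent with the constraints of Section~\ref{s:geom}: Theorem~\ref{t:3dimfiniteisotropy} restricts the $3$-manifolds that can occur as finite-isotropy fixed sets, Proposition~\ref{p:nonorientable} forbids non-orientable arcs outside of an isolated weighted claw, Theorem~\ref{T:unknotted_cycle} forces any cycle to be unique and unknotted in $S^3$, and Corollary~\ref{c:orbitconfigs} excludes the weighted claw when $M^5 = S^5$. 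For each surviving weighted graph the equivariant structure (orbit space plus isotropy decoration) reconstructs $M^5$ up to equivariant diffeomorphism.

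The main obstacle is the combinatorial bookkeeping in the $M^{*} \cong S^3$ case: one must verify that every admissible weighted graph produces one of the four listed manifolds and, conversely, that all four are actually realized. Identification proceeds by matching invariants. The free rank of $H_2(M^5)$ from Lemma~\ref{p:R} separates the two rational-homology $5$-spheres from the two $S^2$-bundles over $S^3$, while Theorem~\ref{t:cohominequalities} applied to the $\zzz_p$-isotropy subgroups attached to each arc detects the torsion in $H_2$ that distinguishes the Wu manifold from $S^5$ and $S^3\tilde{\times}S^2$ from $S^3\times S^2$. A standard Orlik--Raymond-type equivariant classification for $T^2$ actions then upgrades agreement of these invariants to a diffeomorphism with the corresponding model.
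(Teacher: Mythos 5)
Your high-level architecture matches the paper's: split on the topology of $M^{*}$, dispose of the fixed point homogeneous case via a soul decomposition, and in the $M^{*}\cong S^3$ case enumerate admissible weighted graphs and decompose $M^5$ into disc bundles. However, there is a genuine gap at the final identification step, where the proposal departs from the paper in a way that does not work.

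First, the claim that Theorem~\ref{t:cohominequalities} ``detects the torsion in $H_2$ that distinguishes\ldots $S^3\tilde{\times}S^2$ from $S^3\times S^2$'' is false: both $S^3\times S^2$ and $S^3\tilde{\times}S^2$ have $H_2\cong\zzz$ with no torsion. They are distinguished by the second Stiefel--Whitney class, not by $H_2$. The paper, accordingly, never separates these two in the Mayer--Vietoris arguments; it computes $H_2(M^5)\cong\zzz$ and then concludes only that $M^5$ is ``one of the two $S^3$-bundles over $S^2$.'' (Relatedly, you write ``$S^2$-bundles over $S^3$''; since $\pi_2(SO(3))=0$ there is only one such bundle, and that fact is used in the paper only in a subcase of the fixed point homogeneous analysis, not as the home of $S^3\times S^2$ and $S^3\tilde{\times}S^2$.)

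Second, and more fundamentally, the appeal to ``a standard Orlik--Raymond-type equivariant classification for $T^2$ actions'' to upgrade matching invariants to a diffeomorphism is not available: Orlik--Raymond--Seifert classifies circle actions on $3$-manifolds, and no analogous equivariant classification of $T^2$ actions on simply-connected $5$-manifolds is cited or established here. The paper instead invokes the Barden--Smale classification of closed simply-connected $5$-manifolds, which says that $H_2$ (with its linking form) together with $w_2$ determine the smooth type. Once $H_2(M^5)$ is computed from the disc bundle decompositions via Mayer--Vietoris, Barden--Smale does the work your equivariant classification was supposed to do. Your proof needs to be rerouted through Barden--Smale, and the torsion-based distinction between the two $S^3$-bundles over $S^2$ should be dropped; you either compute $w_2$ directly or, as the paper does, leave both bundles in the conclusion since both appear in the theorem statement.

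Finally, a smaller point: in the fixed point homogeneous case your sketch of ``a van Kampen argument\ldots reduces $N^3$ to a short list'' understates the work. Proposition~\ref{P:1rank}, the analysis of the set $B$ at maximal distance (including whether $B$ is itself fixed by another circle), and the elimination of Seifert-fibered spherical and flat $3$-manifolds via cohomogeneity-one restrictions on $B^3$ (Propositions~\ref{p:noSeiferts} and~\ref{p:B3fixed}) are all required; $\rrr P^3\#\rrr P^3$ in particular survives as a possibility for $F^3$ until one uses simple connectivity of $M^5$ in the disc bundle decomposition. This part of the proposal is in the right spirit but would need to be filled out.
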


By lemma~\ref{l:nofreeoralmostfree}, the $T^2$ action is neither free nor almost free. In particular, this tells us that there is always some circle subgroup with non-empty fixed point set. 
We then have two cases to consider: case A, where some circle subgroup acts  fixed point homogeneously and  therefore $M^*$ is $D^3$ or $S^2\times I$, and case B, where no circle subgroup acts fixed point homogeneously and hence $M^*=S^3$. Throughout, our main goal will be to compute $H_2(M^5)$. The conclusions of  theorem~\ref{T:dim5} will then follow by the Barden-Smale classification of simply-connected $5$-manifolds \cite{Ba,Sm}.
We remark that  it is only in case B, where $M^*=S^3$ and we have finite isotropy, that  the Wu manifold, $SU(3)/SO(3)$, may occur. 
Observe also that if one circle subgroup acts freely, then $M^5$ fibers over one of the four manifolds  $\ccc P^2, S^2\times S^2$ or $\ccc P^2\#\pm \ccc P^2$ (cf.  theorem~\ref{T:sy1}). The corresponding total space is diffeomorphic to $S^5$, $S^3\times S^2$ or  $S^3\tilde{\times}S^2$ (cf. \cite{DL}).  It follows that in the case where we obtain the Wu manifold, no circle subgroup may act freely.


\subsection{Case A: $\d M^*\neq\emptyset$}\label{s:5b}

Let $M$ be a non-negatively curved manifold with a fixed point homogeneous $T^1$ action. By definition, this means that there is a component $F$ of $\Fix(M;T^1)$ of codimension $2$.
The following proposition restricts the fundamental group of $F$ depending on the dimension of the set at maximal 
distance to $F^*$ in the orbit space $X=M/T^1$ of the action. We let $\pi:M\rightarrow X$ be the orbit projection map of the fixed point homogeneous circle action.


\begin{proposition} 
\label{P:1rank}
Let $M^n$ be a closed, simply-connected, non-negatively curved manifold of dimension $n\geq 4$ with an isometric $T^1$ action and suppose that  \linebreak 
$\Fix(M^n; T^1)$ contains an $(n-2)$-dimensional component $F^{n-2}$.  Let $C^k$ be the set at maximal distance from $F^{n-2}$ in the orbit space $X^{n-1}=M^n/T^1$.

\begin{itemize}

\item[(1)] If $C^k$ has dimension $k=n-2$,  then $C^k$ is fixed by the $T^1$ action, is isometric to $F^{n-2}$ and $F^{n-2}\cong C^{n-2}$ is simply-connected.\\

\item[(2)]If $C^k$ has dimension $k \leq n-4$, then $F^{n-2}$ is simply-connected.\\

\end{itemize}
\end{proposition}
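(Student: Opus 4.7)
The plan is to handle the two parts separately, each reducing to established structure theorems combined with a simple fundamental-group computation. Both arguments exploit the decomposition $M^n = D(F^{n-2}) \cup_E D(B)$, where $B = \pi^{-1}(C^k)$ and $E$ is the common boundary, provided by the Soul Theorem (Theorem~\ref{T:soulthm}).

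For Part (1), the strategy is to recognize $C^{n-2}$ as a second codimension-2 component of $\Fix(M^n; T^1)$ and then invoke the Double Soul Theorem. First note that $F^{n-2}$ sits in $\partial X$ as a codimension-1 stratum of $X = M^n/T^1$, and the hypothesis $\dim C^{n-2} = n-2$ makes $C^{n-2}$ codimension 1 in $X$. Since $M^n$ is simply-connected with connected principal orbit $T^1$, Theorem~\ref{T:Bredon_NSE} forbids special exceptional orbits, so the only codimension-1 strata of $X$ arise from codimension-2 fixed-point components of $T^1$. Combined with an analysis of how the maximum set of the concave function $d(\,\cdot\,,F^{n-2})$ can sit inside $X$, this forces $C^{n-2}$ to coincide with such a fixed component. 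The Double Soul Theorem (Theorem~\ref{T:sy2}) then yields that $F^{n-2}$ is isometric to $C^{n-2}$ and that $M^n = P \times_{T^1} S^2$ for a principal $T^1$-bundle $P \to F^{n-2}$. Simple connectivity of $F^{n-2}$ is then immediate from the long exact homotopy sequence of the fiber bundle $S^2 \to M^n \to F^{n-2}$, since $\pi_1(S^2)$, $\pi_0(S^2)$, and $\pi_1(M^n)$ all vanish.

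For Part (2), the plan is a transversality-plus-retraction argument. Since $T^1$-orbits have dimension at most 1,
\[
\dim B = \dim \pi^{-1}(C^k) \leq k + 1 \leq n-3,
\]
so $B$ has codimension at least 3 in $M^n$. A standard general-position argument (perturbing both loops and null-homotopies off a codimension-3 subset) shows that the inclusion $M^n \setminus B \hookrightarrow M^n$ induces an isomorphism on $\pi_1$, giving $\pi_1(M^n \setminus B) = 1$. On the other hand, the Soul-Theorem decomposition $M^n = D(F^{n-2}) \cup_E D(B)$ lets us contract the annular region $D(B) \setminus B$ radially onto $E$, so that $M^n \setminus B$ deformation retracts onto the closed tube $\overline{D(F^{n-2})}$, which itself retracts onto $F^{n-2}$. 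We conclude that $\pi_1(F^{n-2}) = 1$.

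The main obstacle will be the structural step in Part (1): arguing rigorously that a totally convex, codimension-1 maximum-distance set $C^{n-2}$ in the orbit space must actually lie in a codimension-1 stratum, rather than being a generic totally convex hypersurface in the smooth interior of $X$. This will require combining the absence of special exceptional orbits with a local Alexandrov-geometric analysis showing that a codimension-1 interior maximum set would force a local splitting of $X$ that is incompatible with the simple connectivity of $M^n$ (equivalently, with the simple connectivity of $X$ implied by Theorem~\ref{t:Bredonpi1}). Once this rigidity is in place, both conclusions follow essentially by bookkeeping from the two Soul-type theorems.
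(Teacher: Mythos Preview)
Your argument for Part~(2) matches the paper's almost exactly: both push a null-homotopy of a loop in $F^{n-2}$ off the preimage $B$ by transversality (using $\dim B \leq n-3$) and then retract the complement onto $F^{n-2}$.

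For Part~(1) your strategy diverges from the paper's, and the step you yourself flag as the ``main obstacle'' is a genuine gap. You want to argue geometrically that a codimension-$1$ maximum set $C^{n-2}$ cannot sit in the principal stratum of $X$, appealing to some ``local splitting incompatible with simple connectivity.'' But the soul of a nonnegatively curved Alexandrov space with boundary can perfectly well be an interior hypersurface; what you would actually need is a global separation argument (something like: $C$ is two-sided, hence separates $X$ since $\pi_1(X)=0$, and then the component not containing $F$ would violate maximality of $d(\cdot,F)$ on $C$). Making that rigorous requires knowing that $C$ is a genuine two-sided topological hypersurface rather than merely a totally convex Alexandrov subspace of the right dimension, and you have not supplied this.

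The paper bypasses these geometric regularity issues with a homological argument. Assuming $C$ is \emph{not} fixed by $T^1$, the preimage $B=\pi^{-1}(C)$ has dimension $n-1$; writing $M=V\cup U$ as tubes around $F$ and $B$, one first checks via Mayer--Vietoris that $U$ must be non-orientable, so that $H_{n-2}(U)$ carries $\mathbb{Z}_2$ torsion. Feeding this back into the Mayer--Vietoris sequence contradicts the facts that $H_{n-1}(M)=0$ and $H_{n-2}(M)$ is torsion-free (Poincar\'e duality, $M$ simply connected). This forces $C^{n-2}$ to be a second codimension-$2$ fixed component, after which the Double Soul Theorem and the long exact sequence of the resulting $S^2$-bundle finish exactly as you describe. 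The homological route has the advantage of making no regularity demands on $C$ beyond what the disc-bundle decomposition already provides.
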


\begin{proof} 
 
 First we prove (1).  If we suppose that $C$ is not fixed by the $S^1$ action, then $B=\pi^{-1}(C)$ has dimension $n-1$. We may decompose $M$ as a union of neighborhoods of $N^{n-2}$ and  $B$, which we will denote $V$ and $U$, respectively.
Their common boundary is a circle bundle over $N^{n-2}$ which we denote by $\d V$. 
Observe that both $V$ and $\d V$ are orientable, but that $U$ is not (this follows from the Mayer-Vietoris sequence of the triple $(M, V, U)$). Since $M^n$ is simply-connected
it follows by duality that $H_{n-1}(M)=0$ and that the torsion subgroup of $H_{n-2}(M)$ is trivial. Further, since $\d V$ is a compact, orientable manifold of dimension $n-1$, the torsion subgroup of $H_{n-2}(\d V)$ is also trivial.
Likewise, since $V$ deformation retracts onto $N^{n-2}$, $H_{n-2}(V)=\zzz$. Since $B$ is non-orientable, the torsion subgroup of $H_{n-2}(U)$ is equal to $\zzz_2$. If we write down the first few elements of the Mayer-Vietoris sequence of the triple $(M, V, U)$ we have:

\bdm
\begin{matrix}
0\ra H_n(M)\ra H_{n-1}(\d V)\ra H_{n-1}(U)\oplus H_{n-1}(V)\ra H_{n-1}(M)\\
\ra H_{n-2}(\d V)\ra H_{n-2}(U)\oplus H_{n-2}(V)\ra H_{n-2}(M).\\
\end{matrix}
\edm
 Substituting known values we obtain:
 
 \bdm
\begin{matrix}
0\ra \zzz\ra \zzz \ra 0 \oplus 0 \ra 0\\
\ra \txt{Fr}(H_{n-2}(\d V))\ra (\txt{Fr}(H_{n-2}(U))\oplus \zzz_2) \oplus \zzz \ra \txt{Fr}(H_{n-2}(M)).\\
\end{matrix}
\edm
The sequence is not exact and thus this case cannot occur. This in turn implies that the inverse image of $C^{n-2}$ in $M$  must be exactly $C^{n-2}$ and thus $C^{n-2}$ is a component of $\Fix(M; T^1)$. It then follows from the Double Soul Theorem \ref{T:sy2} that $M$ is an $S^2$ bundle over $F^{n-2}$ and hence $N^{n-2}$ must be simply-connected.
 
To prove (2), let $\gamma$ be a loop in $F^{n-2}\subset M^n$. Since $M^n$ is simply-connected, $\gamma$ bounds a $2$-disk $D^2$. Let $B^{k'} = \pi^{-1}(C^k)$ and observe that $k'\leq n-3$. By transversality, we can perturb $D^2$ so as to lie in the complement of $D(B^{k'})$, a tubular neighborhood of $B^{k'}$, while keeping $\gamma=\partial D^2$ in $F^{n-2}$. The conclusion follows after observing that $D^2$ is now contained in $D(F^{n-2})$, which deformation retracts onto $F^{n-2}$. 
\end{proof}


\begin{remark}
The assertions in Proposition \ref{P:1rank} hold trivially in dimension $2$, since in this case the fixed point set components of codimension $2$ are isolated points. 
In dimension $3$, however, proposition~\ref{P:1rank}  fails, since a $1$-dimensional fixed point set component, being compact, must be a circle and thus has infinite fundamental group.
\end{remark}

We will now classify non-negatively curved simply connected $5$-manifolds $M^5$ with an isometric $T^2$ action such that some circle subgroup $T^1$ acts fixed point homogeneously. We remark that simply connected $5$-manifolds with non-negative curvature and a fixed point homogeneous isometric circle action were classified in \cite{GGSp} and that the resulting classification is the same in both cases. We further remark that the methods of proof are distinct.


 In the following two propositions we will characterize $F^3$ in the case where $B=\pi^{-1}(C)$ is of dimension three. We must consider two cases: case one, where $B^3$ is not fixed by any circle action and case two, where $B^3$ is fixed by a distinct circle subgroup of $T^2$. We begin with case one.


\begin{proposition}
\label{p:noSeiferts}
 Let $M^5$ be a closed, simply-connected, non-negatively curved $5$-manifsold admitting an isometric $T^2$ action. Suppose that $F^3$ is the unique $3$-dimensional fixed point set component of $\Fix(M^5; T^1)$ for some $T^1\subset T^2$ and that $B=\pi^{-1}(C)$ is of dimension $3$. Further suppose that $B$ is not the fixed point set of any circle subgroup of $T^2$.
 Then $F^3$ is diffeomorphic to one of $S^3$, $\lpq$, $S^2\times S^1$ or $\rrr P^3\#\rrr P^3$.
\end{proposition}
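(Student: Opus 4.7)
\emph{Proof plan.} Since $F^3$ is the fixed-point set of an isometric $T^1 \subset T^2$ action on the orientable Riemannian manifold $M^5$, it is a closed, orientable, totally geodesic submanifold of dimension three; hence it inherits non-negative sectional curvature and carries an isometric, almost-effective action of the residual circle $T^2/T^1 \cong T^1$. Theorem~\ref{T:n3.1} therefore identifies $F^3$ as one of the following: a spherical $3$-manifold $S^3/\Gamma$, $S^2 \times S^1$, $\rrr P^3 \# \rrr P^3$, $T^3$, or one of the four flat $T^2$-bundles over $S^1$ covered by $T^3$. The task is then to rule out $T^3$, each of the four flat $T^2$-bundles and every spherical quotient $S^3/\Gamma$ with $\Gamma$ non-cyclic.

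Next I would record the structural decomposition coming from applying Soul Theorem~\ref{T:soulthm} to the non-negatively curved orbit space $X^4 = M^5/T^1$: since $T^1$ acts fixed-point homogeneously on $M^5$ with unique codimension-two fixed component $F^3$, the soul $C^2 \subset X^4$ has dimension two and one obtains the decomposition
\[
M^5 \;=\; D(F^3) \;\cup_E\; N(B^3),
\]
where $D(F^3)$ is the normal disk bundle of $F^3$ (a complex line bundle, since the $T^1$-action rotates the normal directions), $N(B^3)$ deformation retracts to $B^3 = \pi^{-1}(C^2)$, and the common boundary $E$ is the unit normal bundle of $F^3$, hence a principal $S^1$-bundle over $F^3$. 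To narrow down $B^3$, observe that $B^3$ is closed, orientable and $T^2$-invariant, and that the hypothesis on $B$ forces the induced $T^2$-action on $B^3$ to be of cohomogeneity one; hence by the list in Subsection~\ref{SS:3MT2} we have $B^3 \in \{S^3, \lpq, S^2 \times S^1, T^3\}$, and a rank argument on $H_1$ shows $B^3 = T^3$ is incompatible with the remaining constraints, leaving $\pi_1(B^3)$ cyclic.

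Finally, I would combine these two descriptions of $M^5$ with $\pi_1(M^5) = 1$ via Seifert--van Kampen and Mayer--Vietoris, translating simple-connectedness of $M^5$ into restrictions on the pair $(\pi_1(F^3), \pi_1(B^3))$ mediated by the bundle structure on $E$. Elimination of $T^3$ and the four flat $T^2$-bundles as candidates for $F^3$ follows from a $b_1$-rank comparison: their first homology contains too many generators to be compatible with $H_1(E)$ mapping to $H_1(F^3) \oplus H_1(B^3)$ modulo the fiber class in such a way that $H_1(M^5) = 0$. The spherical quotients $S^3/\Gamma$ whose abelianization $\Gamma^{\textrm{ab}}$ is non-cyclic (in particular the binary dihedral $D^*_n$ with $n$ even) are eliminated similarly. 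The main obstacle will be the finer elimination of those non-cyclic spherical quotients whose abelianizations happen to be cyclic or trivial—the binary polyhedral groups $T^*$, $O^*$, $I^*$ and the remaining binary dihedral $D^*_n$—while leaving $\rrr P^3 \# \rrr P^3$ on the list; this requires carefully tracking the Euler class of $E \to F^3$ against the tight constraints imposed by the classification of $B^3$, case by case, and showing that only the four stated diffeomorphism types of $F^3$ are geometrically realisable within a simply-connected $M^5$.
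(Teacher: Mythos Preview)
Your proposal is a plan rather than a proof, and you explicitly flag the hardest step---eliminating the spherical space forms $S^3/\Gamma$ with $\Gamma$ one of the binary polyhedral groups $T^*$, $O^*$, $I^*$ or binary dihedral $D^*_n$---as an unresolved ``main obstacle''. Gesturing at ``tracking the Euler class of $E \to F^3$ against the tight constraints imposed by the classification of $B^3$, case by case'' is not an argument. The homological route is in fact poorly suited here: $I^*$ is perfect, so the Poincar\'e homology sphere $S^3/I^*$ is indistinguishable from $S^3$ at the level of $H_*$, and the only principal circle bundle over it is trivial. To exclude it along your lines you would need to work with the full fundamental group and to know that $\partial N(B^3)$ is itself a circle bundle over $B^3$; you have not established the latter (you only claim a deformation retraction), and even granting it the case analysis is substantial and entirely unwritten. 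You also assert without justification that $B^3$ is orientable.

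The paper's argument is entirely different and sidesteps this difficulty by exploiting the ambient $T^2$-isotropy structure rather than the homology of the double-disk decomposition. The spherical and flat manifolds to be excluded are precisely those for which the residual circle action on $F^3$ is almost free with three or four exceptional orbits (the Seifert types $S^2(a,b,c)$ and $S^2(2,2,2,2)$). Each such exceptional orbit in $F^3$ is a circle fixed by some $\zzz_k \subset T^2$, and by the isotropy analysis (Proposition~\ref{p:3dimfixed}) this circle is a singular orbit of a $3$-manifold $N^3_k \subset M^5$ fixed by $\zzz_k$ and carrying a cohomogeneity-one $T^2$-action. One non-principal orbit of each $N^3_k$ lies in $F^3$; the other must lie at maximal distance from $F^3$, hence in $B^3$. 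But $B^3$ itself carries a cohomogeneity-one $T^2$-action and so has at most two non-principal orbits. This immediately rules out every $F^3$ with three or more exceptional fibres, with a short supplementary argument when some $N^3_k$ is non-orientable (the cases $S^2(2,2,n)$ and $S^2(2,2,2,2)$). The torus $T^3$ is excluded separately and directly: its image in $\partial(M^5/T^2)$ would be $T^2$, whereas every boundary component of $M^*$ is a $2$-sphere.
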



\begin{proof}
Observe that $F^3$ is orientable, but may not be simply-connected. Since it admits a circle action it must be one of the $3$-manifolds listed in  theorem~\ref{T:n3.1}.  The circle action 
can be free, almost free or admit fixed points. Observe that the action cannot be $\zzz_k$-ineffective. 

Recall from the proof of theorem~\ref{T:n3.1} that the $3$-manifolds with  spherical geometry,  $S^2(2,3,3)$, 
$S^2(2,3,4)$, $S^2(2,3,5)$, $S^2(2,2,n)$, $n\geq 2$,  and the ones with Euclidean geometry, $S^2(2, 2, 2, 2)$, $S^2(3, 3, 3)$, $S^2(2, 4, 4)$ or $S^2(2, 3, 6)$
may only occur when the circle action is almost free and that $T^3$ may occur for any type of circle action. We will first show that if the action is almost free, then the only possibilities for $F^3$ that can occur are the ones listed in the proposition, as well as $T^3$. Finally, we will show that $T^3$ cannot occur for any type of action. 

Thus we assume $F^3$ to be one of the $3$-manifolds with spherical or Euclidean geometry listed above.
Since the circle action is almost free, the image of $F^3$ in the quotient space, $F^3/T^2\subset M^*$, is  a connected subspace 
of the boundary $\d M^*=S^2$.  By the Soul theorem for Alexandrov spaces, the distance function from 
 $\d M^*=S^2$  is concave and therefore any singular points, that is, points with space of directions of 
diameter $\leq \pi/2$, in $M^*$ must be at maximal 
distance from $F^3/T^2\subset \d M^*$. In particular, these singular points correspond to orbits of the $T^2$ action in $M^5$
that are singular or exceptional and in the latter case they must be of the form $T^2/(\zzz_2\times \zzz_2)$ (cf. the proof of proposition \ref{p:3dimfixed}).

Since we assume that the circle action on $F^3$ is almost free,  
there will be fixed circles of finite isotropy. By our previous analysis of the isotropy representations, 
these fixed circles must belong to a $3$-dimensional submanifold of 
finite isotropy, say $\zzz_k$, intersecting $F^3$ in the same circle. We will denote this $3$-manifold by $N^3_k$. 

Observe that $N^3_k$ admits a cohomogeneity one $T^2$ action and 
it must have at least one singular orbit, which is precisely the circle that belongs to $F^3\cap N^3_k$.
Now we note that if $F^3$ were to contain at least two circles fixed by the same $\zzz_k$ subgroup, it might be possible 
for these two circles to correspond to the two distinct singular orbits of a $N^3_k$. 
If this were the case, then either there must be a unique principal orbit or there is an  $(T^2/\zzz_k)\times I$ at maximal distance from $F^3$.  If there is a unique principal orbit of $N^3_k$ at maximal distance from $F^3$ where $N^3_k$ ``turns around" to head back to $F^3$,
we then have a point where there are four distinct directions fixed by the corresponding isotropy subgroup and therefore all of $M^5$ would be fixed by this isotropy, making 
the action $\zzz_k$-ineffective, contrary to hypothesis. Likewise, if we have a $(T^2/\zzz_k)\times I$ at maximal distance, then we will have two points where this happens, again 
giving us a contradiction.

 Since $B^3$ is convex, it is a non-negatively 
curved Alexandrov space admitting a $T^2$ action. By hypothesis,  this action must be of cohomogeneity one 
 and therefore by \cite{GGS2} $B^3$ is an invariant submanifold of $M^5$ (cf. also \cite{GGSp}). 
 Since $B^3$ may have at most two non-principal orbits it follows easily that none of the $3$-manifolds
   with  spherical geometry  $S^2(2,3,3)$, 
$S^2(2,3,4)$, $S^2(2,3,5)$, $S^2(2,2,n)$, $n\geq 2$,  nor the ones with Euclidean geometry $S^2(2, 2, 2, 2)$, $S^2(3, 3, 3)$, $S^2(2, 4, 4)$ or $S^2(2, 3, 6)$
may occur. 

Now we have two cases to consider: case 1, where $N^3_k$ is orientable and case 2, where $N^3_k$ is non-orientable. Recall that $N^3_k$ is only non-orientable when $k=2$. In the first case, it follows from the cohomogeneity one decomposition 
that $N^3_k$
has two singular orbits (cf. \cite{M}, \cite{N}) and in the second case, it will have one singular and one exceptional orbit. One of these singular orbits will belong to 
$F^3$ and since singular orbits not belonging to $F^3$ 
must be contained 
in the set at maximal distance from $F^3$, the other orbit is contained in $B^3$. 

For the manifolds $S^2(2, 3, 3)$, $S^2(2, 3, 4)$, $S^2(2, 3, 5)$,  $S^2(3, 3, 3)$, $S^2(2, 4, 4)$ and $S^2(2, 3, 6)$ it follows  that we must have at least two singular orbits in $B^3$ corresponding 
to the intersection of the corresponding $N^3_k$ , $k\neq 2$, with $B^3$ and another singular or exceptional orbit corresponding to the intersection of $N^3_k$ with $B^3$ for the third isotropy group $\zzz_k$.  This yields a contradiction, since $B^3$ can have at most two non-principal orbits.
For the manifolds $S^2(2, 2, n)$ and $S^2(2, 2, 2, 2)$  if at least two of the isotropy subgroups correspond to orientable $N^3_k$, then we obtain a contradiction in exactly the same manner.

We now consider the case where at least two of the isotropy groups in $S^2(2, 2, n)$ and at least three in  $S^2(2, 2, 2, 2)$ correspond to a non-orientable $N^3_k$.  
Observe that all the $\zzz_2$ isotropy subgroups of these manifolds belong 
to a unique circle subgroup of $T^2$ and are therefore all the same $\zzz_2$ subgroup.  As we saw in our analysis of the isotropy groups in 
proposition \ref{p:3dimfixed}, at an exceptional orbit the three $3$-manifolds that 
must share this exceptional orbit must also have distinct $\zzz_2$ isotropy. If there is an exceptional orbit, then for every 
circle fixed by a $\zzz_2$ in $F^3$ we would have two more $3$-manifolds fixed by distinct $\zzz_2$ 
subgroups and whose other singular orbit must be contained in $F^3$, since the exceptional orbit is already contained in $B^3$. 
This, however, is impossible and so this case cannot occur.

Thus when the action is almost free only $S^3$, $\lpq$, $S^2\times S^1$, $T^3$ or $\rrr P^3\# \rrr P^3$ may occur.

 Finally, $T^3$ is ruled out, since $T^3$ has quotient $T^2$ modulo a circle action, and this would imply that $T^2\subset \d M^*$,   where  $M^*=M^5/T^2$. This, however, is a contradiction, since the connected components of $\d M^*$ are homeomorphic to $S^2$.
\end{proof}

We now consider case two.


\begin{lemma}
\label{l:B3=F3}
Let $T^2$ act isometrically on $M^5$, a closed, non-negatively curved, simply-connected $5$-manifold. Suppose that $\d M^*\neq \emptyset$,  
and both $F^3$ and $B^3$ are fixed by different circle subgroups. If $F^3$ is one of the spherical manifolds, $S^2(2,3,3)$, 
$S^2(2,3,4)$, $S^2(2,3,5)$, $S^2(2,2,n)$, $n\geq 2$,  or  one of the flat manifolds, $S^2(2, 2, 2, 2)$, $S^2(3, 3, 3)$, $S^2(2, 4, 4)$ or $S^2(2, 3, 6)$, then
$B^3$ must be diffeomorphic to $F^3$.
\end{lemma}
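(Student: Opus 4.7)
My plan is to exploit the Soul Theorem decomposition for the fixed point homogeneous $T^1_1$-action on $M^5$ (where $T^1_1 \subset T^2$ is the circle fixing $F^3$), showing that $M^5$ decomposes along a common $4$-dimensional boundary $E$ that carries compatible principal circle bundle structures over both $F^3$ and $B^3$. This will force their Seifert base orbifolds to coincide, which together with the non-negative curvature inherited by $B^3$ and the classification of closed non-negatively curved $3$-manifolds will yield the desired diffeomorphism.

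First I would apply the Soul Theorem~\ref{T:soulthm} to $\pi_1 : M^5 \to M^5/T^1_1$. Since $F^3 = \Fix(M^5; T^1_1)$ has codimension two and $B^3$ is the pre-image of the soul, we obtain $M^5 = D(F^3) \cup_E D(B^3)$, where $E = \partial D(F^3) = \partial D(B^3)$ is a closed $4$-manifold. Effectiveness of the $T^2$-action forces the slice representation of $T^1_1$ on the normal $2$-disk to $F^3$ to be rotation by a primitive character, so $E \to F^3$ is a principal $T^1_1$-bundle; symmetrically, $E \to B^3$ is a principal $T^1_2$-bundle since $T^1_2$ fixes $B^3$ pointwise and rotates its normal disk. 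The intersection $T^1_1 \cap T^1_2$ is trivial, since any common element would have to act freely both along $T^1_1$-fibers and along $T^1_2$-fibers through each point of $E$, contradicting freeness at a principal point of the $T^2$-action. Hence $T^1_1$ and $T^1_2$ commute and together split $T^2$, so the two iterated quotients of $E$ give
\begin{equation*}
F^* \;=\; F^3/T^1_2 \;=\; E/T^2 \;=\; B^3/T^1_1 \;=\; B^*,
\end{equation*}
so $B^3$ is a closed Seifert-fibered $3$-manifold over the same base orbifold as $F^3$. Moreover, as the fixed point set of the isometry $T^1_2$, $B^3$ is totally geodesic in $M^5$ and therefore inherits a non-negatively curved Riemannian metric.

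By the structure theory for closed non-negatively curved $3$-manifolds used in the proof of Theorem~\ref{T:n3.1}, $B^3$ must be modeled on $S^3$, $E^3$, or $S^2 \times \rrr$. The $S^2 \times \rrr$ case is excluded because $F^* = B^*$ has at least three cone points, while a $\mathrm{Nil}$-geometry Seifert fibration over one of the Euclidean base orbifolds would be excluded by non-negative curvature. Hence $B^3$ is a closed orientable spherical or flat Seifert fibered $3$-manifold with the same base as $F^3$, and by the classification of such spaces each listed base orbifold admits a unique such total space (namely $S^3/\Gamma$ for the corresponding binary polyhedral group $\Gamma$ over the spherical orbifolds, and the appropriate flat quotient of $T^3$ over the Euclidean ones); this forces $B^3 \cong F^3$. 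The main obstacle is justifying the precise equivariant structure of the decomposition $M^5 = D(F^3) \cup_E D(B^3)$: one must verify that $D(F^3)$ and $D(B^3)$ are smooth equivariant $2$-disk bundles along the totally geodesic $F^3$ and $B^3$, that their boundaries coincide as a single closed $T^2$-invariant $4$-manifold $E$, and that the $T^1_1$- and $T^1_2$-actions on $E$ are both free, yielding the two principal circle bundle structures used in the key identification above.
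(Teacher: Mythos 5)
Your approach is genuinely different from the paper's. The paper's proof is short and structural: it invokes Proposition~\ref{p:noSeiferts}, which shows that when $F^3$ has an almost free circle action there are $3$-dimensional submanifolds $N^3_k$ fixed by $\zzz_k$ isotropy, each of which is a cohomogeneity one $T^2$-manifold with one singular orbit in $F^3$ and the other forced to lie in $B^3$. These $N^3_k$ transport the entire local orbit structure from $F^3$ to $B^3$, and the diffeomorphism is concluded from that. Your route instead uses the Soul Theorem decomposition $M^5 = D(F^3)\cup_E D(B^3)$ and identifies $F^*=E/T^2=B^*$ by taking iterated $T^1_1$- and $T^1_2$-quotients of the common boundary $E$. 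That identification is correct (and is a clean way to see the base orbifolds agree), but it has two issues.

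First, a minor one: your argument that $T^1_1\cap T^1_2=\{e\}$ is not correct. Two distinct circle subgroups of $T^2$ can have nontrivial finite intersection $\zzz_m$, and having a nontrivial $g\in T^1_1\cap T^1_2$ does not contradict freeness of the $T^2$-action at a principal point (such a $g$ fixes no point of $E$). Fortunately this does not matter: the identification $F^3/T^1_2 = E/T^2 = B^3/T^1_1$ only requires that $T^1_2$ surject onto $T^2/T^1_1$ (which it does, since the two circles are distinct), so the equality of base orbifolds survives.

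Second, and more seriously: matching base orbifolds is not in itself enough to conclude $B^3\cong F^3$ for Seifert fibered $3$-manifolds. For a given spherical base orbifold such as $S^2(2,3,5)$ there are infinitely many spherical space forms $S^3/\Gamma$ with an almost free circle action over that base (varying the central cyclic factor of $\Gamma$, equivalently the Euler number), and similar multiplicity occurs over the flat orbifolds. So the final assertion ``each listed base orbifold admits a unique such total space'' is false as stated, and something more is needed to pin down the Seifert invariants of $B^3$. This is precisely where the paper's $N^3_k$ argument does the work: each $N^3_k$ is a single cohomogeneity one $T^2$-manifold sharing a singular orbit with $F^3$ at one end and with $B^3$ at the other, so it carries the local fiber data (not merely the cone angle) across, and together with the matching base orbifold this forces the Seifert structures, and hence the diffeomorphism types, to agree. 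If you want to stay with your Soul-decomposition picture, you would need to supplement it with an argument that the two circle bundle structures $E\to F^3$ and $E\to B^3$ constrain the Euler numbers compatibly, or simply import the $N^3_k$-intersection argument from Proposition~\ref{p:noSeiferts} as the paper does.
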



\begin{proof}

As we saw in proposition \ref{p:noSeiferts}, if $F^3$ is one of the manifolds listed in the statement of the lemma, then there will be a three-dimensional manifold $N^3_k$
fixed by a $\zzz_k$ subgroup and  intersecting $F^3$ in a circle. Each $N^3_k$ will also intersect $B^3$ 
in a circle and thus both $F^3$ and $B^3$ are diffeomorphic to one another.

\end{proof}


\begin{proposition}
\label{p:B3fixed}
Let $T^2$ act isometrically on $M^5$, a closed, non-negatively curved, simply-connected $5$-manifold. Suppose that $\d M^*\neq \emptyset$,  
and both $F^3$ and $B^3$ are fixed by different circle subgroups. Then both $F^3$ and $B^3$ are diffeomorphic to one of $S^3$, $\lpq$, $S^2\times S^1$ or $\rrr P^3\#\rrr P^3$.
\end{proposition}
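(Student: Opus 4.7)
The plan is to apply Theorem~\ref{T:n3.1} to both $F^3$ and $B^3$, list the candidate topologies, and then eliminate every possibility except the four listed in the conclusion, using Lemma~\ref{l:B3=F3} to leverage the hypothesis that $B^3$ is itself fixed by a circle in $T^2$.

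As a first step, I would observe that each of $F^3$ and $B^3$ is a closed, orientable, non-negatively curved $3$-manifold carrying a smooth, effective, isometric $T^1$-action induced by the residual circle in $T^2$ (that is, $T^2$ modulo the circle that fixes the given $3$-manifold); orientability follows from $M^5$ being simply-connected and hence orientable together with the fact that components of fixed sets of smooth circle actions on orientable manifolds are orientable. Theorem~\ref{T:n3.1} therefore restricts each of $F^3$ and $B^3$ to the list: $S^3$, a lens space $\lpq$, $S^2\times S^1$, $\rrr P^3\#\rrr P^3$, $T^3$, one of the four flat $T^2$-bundles over $S^1$, or one of the spherical Seifert manifolds $S^2(2,3,3),\ S^2(2,3,4),\ S^2(2,3,5),\ S^2(2,2,n)$.

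To eliminate $T^3$, I would argue as at the end of the proof of Proposition~\ref{p:noSeiferts}: if $F^3=T^3$ then the residual circle acts freely on $F^3$, making $F^3/T^2=T^2$ a boundary component of $M^*$; but by Proposition~\ref{p:M*} the components of $\partial M^*$ are $2$-spheres, a contradiction. The same argument excludes $B^3=T^3$.

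The main remaining step, and the principal obstacle, is to exclude the spherical Seifert manifolds with at least three singular fibers as well as the four flat $T^2$-bundles. Suppose for contradiction that $F^3$ is one of these; then Lemma~\ref{l:B3=F3} forces $B^3\cong F^3$, and by symmetry the same conclusion applies to $B^3$. For each singular fiber in $F^3$ of isotropy $\zzz_k$, the construction used in the proof of Proposition~\ref{p:noSeiferts} produces a $T^2$-invariant $3$-submanifold $N^3_k\subset M^5$ pointwise fixed by $\zzz_k$, of cohomogeneity one under $T^2$, whose two singular circle orbits lie one in $F^3$ and one in $B^3$. The essential difference from the setting of Proposition~\ref{p:noSeiferts} is that here $B^3$ is itself a fixed set rather than a cohomogeneity-one $T^2$-invariant submanifold, so the short ``at most two non-principal orbits'' counting argument used there does not apply directly; instead, I plan to project each $N^3_k$ to an arc in $M^*\cong S^2\times I$ joining a cone point of $F^3/T^2$ on one boundary $2$-sphere to a cone point of $B^3/T^2$ on the other, and then combine the Extent Lemma~\ref{L:xt} applied to the collection of cone points with Toponogov-type rigidity in the spirit of the proof of Theorem~\ref{T:4-circles} to force the number of such arcs to be at most two, contradicting the existence of at least three singular fibers. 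Once the Seifert and flat cases have been ruled out in this way, only $S^3$, $\lpq$, $S^2\times S^1$, and $\rrr P^3\#\rrr P^3$ remain as possibilities for $F^3$ and $B^3$, as claimed.
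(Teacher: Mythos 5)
Your outline matches the paper's strategy in its broad strokes: you correctly list the candidates via Theorem~\ref{T:n3.1}, correctly eliminate $T^3$ by the boundary-sphere constraint from Proposition~\ref{p:M*}, and correctly invoke Lemma~\ref{l:B3=F3} to force $B^3\cong F^3$ when $F^3$ is one of the spherical or flat Seifert manifolds. However, the step you yourself flag as ``the principal obstacle'' --- ruling out the Seifert and flat manifolds --- is precisely the step you have not actually carried out, and the route you propose for it (combining the Extent Lemma with Toponogov-type rigidity ``in the spirit of Theorem~\ref{T:4-circles}'' to bound the number of arcs) is both unfinished and more complicated than what is needed.

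The paper's argument for that step is more direct, and you should notice that you already have its ingredients in hand without any rigidity discussion. Once Lemma~\ref{l:B3=F3} gives $B^3\cong F^3$, each of the (at least three, and four for $S^2(2,2,2,2)$) singular fibers of the Seifert circle action on $F^3$ projects to a point $\overline{p}$ on the boundary sphere $F^3/T^2\subset\partial M^*$ at which $\Sigma_{\overline p}M^*$ is the join of $S^1/\zzz_{k}$ with a point at distance $\pi/2$, i.e.\ a positively curved $2$-disc of diameter $\pi/2$; the same is true for the corresponding points on $B^3/T^2$. That gives six (or eight) distinct points in $M^*$ whose spaces of directions are all such discs, and a single application of the Extent Lemma~\ref{L:xt} to this collection already violates the non-negative-curvature bound $\frac{1}{q+1}\sum\xt_q(\Sigma_{\overline p_i}M^*)\geq\pi/3$. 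In other words, you should be counting the cone points themselves, not the arcs, and you should not need the Toponogov comparison machinery at all. As written, your proposal leaves the essential quantitative contradiction as a plan rather than a proof, so it is incomplete; the fix is to replace the ``arcs $\leq 2$'' rigidity program with the direct point count and a single Extent Lemma estimate.
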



\begin{proof}
Observe first that, if $F^3$ is one of the spherical manifolds, $S^2(2,3,3)$, \linebreak 
$S^2(2,3,4)$, $S^2(2,3,5)$, $S^2(2,2,n)$, $n\geq 2$, or  one of the flat manifolds, \linebreak $S^2(2, 2, 2, 2)$, $S^2(3, 3, 3)$, $S^2(2, 4, 4)$ or $S^2(2, 3, 6)$, by lemma \ref{l:B3=F3}, $B^3$ is diffeomorphic to $F^3$.
Now, we consider the isolated singular points in  the orbit space $M^*=M^5/T^2$ corresponding to the circles fixed by a finite cyclic group in $B^3$ and $F^3$.
For each of these points, the corresponding space of directions will be the join of an $S^1/\zzz_k$ and a single point at distance $\pi/2$ from this circle. That is, it will be a positively curved $2$-disc of diameter $\pi/2$. 
There will be six or eight such isolated singular points in $M^*$ and it is easily seen using the Extent Lemma \ref{L:xt}  that this gives us a contradiction 
to the non-negative sectional curvature condition. Thus none of these possibilities is allowed.

As in the proof of proposition \ref{p:noSeiferts}, we see that  $T^3$ cannot occur and thus the only manifolds 
left  are $S^3$, $\lpq$, $S^2\times S^1$ and $\rrr P^3\# \rrr P^3$.

\end{proof}
We are now in a position to  prove the following theorem:


\begin{theorem}\label{t:FPH} Let $T^2$ act isometrically on $M^5$, a closed, non-negatively curved, simply-connected manifold. If $\d M^*\neq \emptyset$,  then $M^5$ is diffeomorphic to $S^5$, $S^3\times S^2$ or $S^3\tilde{\times} S^2$.
\end{theorem}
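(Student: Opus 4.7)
The plan is to exploit the soul-theoretic decomposition coming from the fixed point homogeneous circle action and then invoke the Barden--Smale classification of closed, simply-connected $5$-manifolds. Since $\partial M^*\neq\emptyset$, Proposition~\ref{p:M*} provides a circle $T^1\subset T^2$ with a $3$-dimensional component $F^3\subset\Fix(M^5;T^1)$, so $T^1$ acts fixed point homogeneously on $M^5$. Writing $X^4=M^5/T^1$, letting $C\subset X^4$ be the set at maximal distance from $F^3$, and setting $B=\pi^{-1}(C)$, the Soul Theorem~\ref{T:soulthm} yields the decomposition $M^5=D(F^3)\cup_E D(B)$. I would split the analysis into subcases according to $\dim B$ and the isotropy behavior along $B$.

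If $B$ is a second $3$-dimensional component of $\Fix(M^5;T^1)$, the Double Soul Theorem~\ref{T:sy2} realizes $M^5$ as an $S^2$-bundle over $F^3$; the long exact homotopy sequence of $S^2\to M^5\to F^3$ combined with $\pi_1(M^5)=0$ forces $\pi_1(F^3)=0$, so $F^3\cong S^3$ and $M^5$ is diffeomorphic to $S^3\times S^2$ or $S^3\tilde{\times}S^2$. If $B$ is $3$-dimensional but either fixed by a different circle subgroup of $T^2$ or not fixed by any circle, Propositions~\ref{p:B3fixed} and~\ref{p:noSeiferts} restrict $F^3$ (and $B^3$ when applicable) to the list $\{S^3,\lpq,S^2\times S^1,\rrr P^3\#\rrr P^3\}$; the residual $T^2/T^1\cong T^1$ action on each of $F^3$ and $B^3$, together with the classification of $T^2$-actions on $3$-manifolds recalled in Section~\ref{SS:3MT2}, provides a $T^2$-equivariant model for the gluing which I would use to identify $M^5$ explicitly. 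If $\dim B\leq 1$, Proposition~\ref{P:1rank}(2) gives $\pi_1(F^3)=0$, so $F^3\cong S^3$, $D(F^3)\cong S^3\times D^2$, and gluing to the equivariant disc-bundle neighborhood $D(B)$ of an isolated singular orbit or a circle orbit produces $S^5$.

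In each of the above subcases, a Mayer--Vietoris computation on $M^5=D(F^3)\cup_E D(B)$ is expected to yield $H_2(M^5;\zzz)\in\{0,\zzz\}$, which via Barden--Smale identifies $M^5$ with one of $S^5$, $S^3\times S^2$, or $S^3\tilde{\times}S^2$; the Wu manifold $SU(3)/SO(3)$ (with $H_2=\zzz_2$) never arises here, consistent with the remark that it only appears in the case $M^*\cong S^3$. The main obstacle I anticipate is the subcase where $F^3\in\{\lpq,S^2\times S^1,\rrr P^3\#\rrr P^3\}$: one must check that the non-trivial fundamental group of $F^3$ is killed by the gluing with $D(B)$ in a manner compatible with the three listed manifolds. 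I would address this by tracking the $T^2$-invariant $1$-cycles in $F^3$ and exhibiting explicit spanning discs in $D(B)$ using the cohomogeneity-one structure that $B^3$ inherits from the residual $T^2$ action, together with the Isotropy Lemma to control the isotropy along minimizing geodesics in $X^4$ joining $F^3$ to $B$.
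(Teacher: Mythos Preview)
Your proposal follows essentially the same route as the paper: split according to $\dim C$, use the Double Soul Theorem when there are two codimension~$2$ fixed components, invoke Proposition~\ref{P:1rank} for $\dim C\leq 1$, use Propositions~\ref{p:noSeiferts} and~\ref{p:B3fixed} together with the cohomogeneity-one structure on $B^3$ for $\dim C=2$, and finish with Mayer--Vietoris and Barden--Smale.

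Two small points are worth correcting. First, in the Double Soul case the $S^2$-bundle over $S^3$ is necessarily trivial (cf.~\cite{St}), so only $S^3\times S^2$ arises there; the twisted bundle $S^3\tilde{\times}S^2$ appears only in the $\dim C=2$ case. Second, the ``main obstacle'' you anticipate is handled more directly than you suggest: rather than tracking invariant $1$-cycles and building explicit spanning discs in $D(B)$, the paper simply observes that simple-connectivity of $M^5$ rules out $\rrr P^3\#\rrr P^3$ for either piece, and then the Mayer--Vietoris sequence for the triple $(D(F^3),D(B^3),\partial D(F^3))$ gives $H_2(M^5)=\zzz$ for every remaining combination of $F^3,B^3\in\{S^3,L_{p,q},S^2\times S^1\}$. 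One detail you do not mention and should: when $\dim C=2$ and there is finite isotropy, the disc-bundle decomposition is not automatic; the paper justifies it by noting that the arcs of finite isotropy in $M^*$ meet the images of $F^3$ and $B^3$ at angle $\pi/2$, so the unit normal field to $F^3$ is tangent to the $3$-manifolds of finite isotropy and the isotopy of $\partial D(F^3)$ onto $\partial D(B^3)$ goes through.
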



\begin{proof}


By hypothesis, we have a circle $T^1\subset T^2$ acting fixed point homogeneously on $M^5$ with orbit space $X^4$.  Let $F^3$ be a component of $\Fix(M^5;T^1)$.
We first observe that if there are two three-dimensional components of  $\Fix(M^5;T^1)$, then by the Double Soul Theorem \ref{T:sy2}
there are exactly two such components, they are isometric and $M^5$ is diffeomorphic to an $S^2$-bundle over $F^3$. Since $M^5$ is simply-connected, $F^3$ must also be simply-connected and therefore $F^3$ must be $S^3$ and, as there is only one $S^2$ bundle over $S^3$ (cf. \cite{St}),
$M^5$ is diffeomorphic to $S^2\times S^3$.

 We now consider the case where there is a unique three-dimensional component, $F^3$ of $\Fix (M^5; T^1)$, and let $C^k$ be the set at maximal distance from $F^3$ in the orbit space $X^4$. We have $0\leq k \leq 3$. If $k=3$, then $C$ is a component of $\Fix(M; S^1)$ and as we saw above, $M^5$ is diffeomorphic to $S^3\times S^2$.

 We first consider the case where $k=0$ or $1$. In this case $F^3$ is simply-connected by proposition~\ref{P:1rank} and is therefore $S^3$. If $C^k$ has dimension $0$, then it is  the soul of $X^4$ and, as in \cite{GS}, $M^5$ is diffeomorphic to $S^5$. If $C^k$ is 1-dimensional, it must be an interval or a circle. Since $F^3\cong S^3$ is homeomorphic to the boundary of a neighborhood of $C^1$, we see that $C^1$ is an interval. By Kleiner's Isotropy Lemma~\ref{l:Kleiner}, the only points in $C^1$ that may have finite non-trivial isotropy 
are the endpoints. In the presence of points with finite isotropy, $F^3$ must be a lens space or the connected sum of two lens spaces, neither of which is possible. Hence all the points in $C^1$ are regular, $X^4$ is a manifold with boundary $F^3=S^3$, the soul of $X^4$ is a regular point and again we conclude that $M^5$ is $S^5$.

Suppose now that $C^k$ is $2$-dimensional.  By proposition~\ref{p:noSeiferts} and proposition \ref{p:B3fixed}, $F^3$ can only be one of $S^3$, $\lpq$, $S^2\times S^1$ or $\rrr P^3\#\rrr P^3$.
Recall that $B^3=\pi^{-1}(C^2)\subset M^5$ is the lift of $C^2$ under the orbit projection map $\pi:M^5\rightarrow X^4$. By proposition \ref{p:B3fixed}, we know that 
if $B^3$ is fixed by some distinct circle subgroup, then it too can only be one of $S^3$, $\lpq$, $S^2\times S^1$ or $\rrr P^3\#\rrr P^3$.
Thus it remains to characterize $B^3$ when it  is not fixed by any circle subgroup. To do so we use the fact that $B^3$ admits a cohomogeneity one $T^2$ action. Once we do this, we will have two cases remaining: case one, where $F^3$ admits an almost free circle action and there are then $3$-manifolds in $M^5$ fixed by finite isotropy and case two, where there is no finite isotropy. We will see that in both cases we may decompose $M^5$ as the union of two disc bundles over $F^3$ and $B^3$, respectively, and we may then directly calculate the homology groups of $M^5$ using the Mayer-Vietoris sequence to classify $M^5$ using the results of Smale and Barden
(cf. \cite{Sm}, \cite{Ba}).


Suppose then that $T^2$ acts on $B^3$ by cohomogeneity one.
In order to understand what the possibilities for $B^3$ are we recall that 
$C^2/S^1$ must be an interval and therefore $C^2$ can only be one of  $S^2$, $D^2$, $\rrr P^2$ or a cylinder $S^1\times I$, since it is non-negatively curved.

We first analyze the possible isotropy configurations on $C^2/S^1 \simeq [0,1]=I$. 
By our previous analysis of the isotropy representations (cf. proposition~\ref{p:3dimfixed}), it follows that there can be no exceptional orbits of the circle action in $C^2$. The action can, however, be $\zzz_k$-ineffective, in which case $B^3$ must also be fixed by $\zzz_k$ and the  $T^2$ cohomogeneity one action is $\zzz_k$-ineffective. In this particular case it follows that $B^3$ can only be $S^2\times S^1$, $S^3$ or $L_{p,q}$. We will therefore assume that the circle action on $C^2$ is effective. 
Then the endpoints of $I$ correspond to either  two singular orbits,  one singular orbit and one principal orbit, or two principal orbits. In the first case, $B^3$ corresponds to $S^3$ or $L_{p,q}$. In the second case, $B^3$ corresponds to $S^2\times S^1$. In the last case, $C^2$ must be a cylinder $S^1\times I$. In this case the soul of $X^4$ is a circle and $F^3$ must be $S^2\times S^1$. This case also corresponds to the case in which $B^3$ is also $S^2\times S^1$.

We summarize our results in the following proposition.


\begin{proposition}
Let $T^2$ act isometrically on $M^5$, a closed, non-negatively curved, simply-connected $5$-manifold. Let $\d M^*\neq \emptyset$
and suppose that $F^3$ is the unique $3$-dimensional component of $\Fix(M^5; S^1)$ for some $S^1\subset T^2$ and that $B=\pi^{-1}(C)$ is of dimension three.
\begin{itemize}

\item[(1)] If $B^3$ is fixed by a distinct circle subgroup of $T^2$, then $F^3$ and $B^3$ are diffeomorphic to one of $S^3$, $\lpq$, $S^2\times S^1$ or $\rrr P^3\#\rrr P^3$.\\

\item[(2)] If $B^3$ is not fixed by any circle subgroup of $T^2$, then $F^3$ is diffeomorphic to one  of $S^3$, $\lpq$, $S^2\times S^1$ or $\rrr P^3\#\rrr P^3$ and 
$B^3$ is diffeomorphic to one of  of $S^3$, $\lpq$ or $S^2\times S^1$.
\end{itemize}
\end{proposition}

We may now complete the proof of the theorem. 
Note first that if $T^2$ acts without finite isotropy, then it is clear that we may decompose $M^5$ as a union of disc bundles over $F^3$ and $B^3$, respectively.
Since $M^5$ is simply-connected, it is not possible for either $F^3$ or $B^3$ to be $\rrr P^3\#\rrr P^3$. All other combinations of $F^3$ and $B^3$ are possible.  In all cases we see from the Mayer-Vietoris sequence of the triple $(D(F^3),D(B^3),\partial D(F^3))$ that $H_2(M^5)=\zzz$ and thus $M^5$ is diffeomorphic to one of the $S^3$ bundles over $S^2$ by results of 
Smale \cite {Sm} and Barden \cite{Ba}.
 
 Finally, if there is finite isotropy, in the orbit space, $M^*=M^5/T^2$, we see that there may be at most two intervals corresponding to three-dimensional manifolds fixed by finite cyclic groups joining at most two distinct points in the image of $F^3$ in $M^*$ to at most two distinct points of   the image of $B^3$ in $M^*$. We further observe that the angle that the interval corresponding to the image of the three-dimensional submanifold of finite isotropy forms with these points  is $\pi/2$. Therefore at the points of the image of $F^3$ in $M^*$, the normal space to the corresponding points of $F^3$ will be the tangent space to the three-dimensional submanifold fixed by finite isotropy. The same is true for the corresponding points in $B^3$. If we then consider the unit normal vector field to $F^3$ in $M^5$, this then allows us  to 
 isotope the boundary of a tubular neighborhood around $F^3$ onto the boundary of a tubular neighborhood of $B^3$
   and we may decompose $M^5$ as a union of two disc bundles over $F^3$ and $B^3$, where, as above, each may be one of $S^3$, $\lpq$ or $S^2\times S^1$ and neither may be $\rrr P^3\# \rrr P^3$. As in the case without isotropy,   we see that $M^5$ is diffeomorphic to one of the $S^3$ bundles over $S^2$ by results of 
Smale \cite {Sm} and Barden \cite{Ba}.

This concludes the proof of theorem~\ref{t:FPH}
\end{proof}


\subsection{\bf Case B: ${\bf M^*=S^3}$}
\label{s:5_dim_1}
We consider now the case where  no circle acts fixed point homogeneously, that is, $M^*=S^3$, and there are either three or four isolated circle orbits. We will prove the following theorem:

\begin{theorem}
\label{T:only-fixed-circles}  Let $M^5$ be a closed, simply-connected, non-negatively curved $5$-manifold admitting an isometric $T^2$ action. If  $M^*=S^3$, then $M^5$ is diffeomorphic to $S^5$, $SU(3)/SO(3)$, $S^3\times S^2$ or $S^3\tilde{\times}S^2$.
\end{theorem}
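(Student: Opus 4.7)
The plan is as follows. Since $M^* = S^3$ has no boundary, no circle subgroup of $T^2$ acts fixed-point homogeneously, so for any $T^1 \subset T^2$ with $\Fix(M^5;T^1)\neq\emptyset$ the components of the fixed set project to isolated points in $M^*$, that is, to isolated circle orbits in $M^5$. By Lemma~\ref{l:nofreeoralmostfree} at least one such $T^1$ exists, and by Proposition~\ref{p:Tn-1orbits} (combined with Corollary~\ref{c:finiteisotropy}) there are at least three isolated circle orbits, while by Theorem~\ref{T:4-circles} there are at most four. Thus there are exactly three or four isolated circle orbits, and Lemma~\ref{l:H2a_finite_extension} gives $\rk_{\zzz}H_2(M^5)=0$ in the first case and $1$ in the second. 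By the Barden--Smale classification this narrows the possibilities to $S^5$ or $SU(3)/SO(3)$ in the three-orbit case and to $S^3\times S^2$ or $S^3\tilde{\times} S^2$ in the four-orbit case; what remains is to control the torsion of $H_2$ and, in the four-orbit case, to identify the second Stiefel--Whitney class.

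I would next enumerate the possible weighted graphs describing the singular set in $M^*\simeq S^3$. The vertices are the isolated singular points, and the arcs carry finite cyclic isotropy labels; by Theorem~\ref{t:3dimfiniteisotropy}, the pre-image of each closed arc is one of $S^3$, $\lpq$, $S^2\times S^1$, $\rrr P^2\times S^1$ or $S^2\tilde{\times} S^1$. The admissible shapes are further restricted by Proposition~\ref{p:nonorientable} (a non-orientable fixed $3$-manifold forces a weighted claw meeting every isolated circle orbit, and only in the three-circle case), by Corollary~\ref{c:orbitconfigs} (such a claw excludes $M^5=S^5$), and by Theorem~\ref{T:unknotted_cycle} (any cycle is unknotted, has prescribed isotropy, and drastically limits the rest of the graph). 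These constraints reduce the problem to a short, explicit list of weighted graphs in each of the two cases.

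For each admissible weighted graph I would build an equivariant decomposition of $M^5$ as the union of closed $T^2$-invariant tubular neighborhoods of the fixed $3$-manifolds (each a $D^2$-bundle deformation retracting onto its core), closed tubular neighborhoods $S^1\times D^4$ of the isolated circle orbits not already absorbed into those bundles, and the complementary piece, which is a principal $T^2$-bundle over a surface with boundary. Using the cohomogeneity-one classification of $3$-manifolds recalled in Subsection~\ref{SS:3MT2} to feed in the homology of the core pieces, the Mayer--Vietoris sequence then computes $H_2(M^5;\zzz)$ explicitly. In the three-circle case the only configurations producing $2$-torsion are the weighted-claw ones, and they yield precisely $H_2(M^5;\zzz)=\zzz_2$, identifying $M^5$ with the Wu manifold; the remaining three-circle graphs give $H_2(M^5;\zzz)=0$ and hence $M^5\cong S^5$. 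In the four-circle case the computation gives $H_2(M^5;\zzz)=\zzz$, and the distinction between $S^3\times S^2$ and $S^3\tilde{\times} S^2$ is read off from the parity of the Euler class of the principal $T^2$-bundle on the complementary piece, which determines $w_2(M^5)$.

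The main obstacle will be the systematic control of $H_2$-torsion in the three-circle case with finite isotropy, particularly when the weighted graph contains a weighted claw, since here the attaching data between $\rrr P^2\times S^1$ and/or $S^2\tilde{\times} S^1$ along the central $T^2/(\zzz_2\times\zzz_2)$ orbit must be pinned down precisely in order to verify that the Mayer--Vietoris output is exactly $\zzz_2$ rather than a larger $2$-group. Odd-prime torsion and $\zzz_{2^k}$-torsion for $k\geq 2$ must be excluded for every admissible graph; for odd primes this is best handled via the Chang--Skjelbred-type inequality of Theorem~\ref{t:cohominequalities} applied to the $\zzz_p$-subactions of $T^2$, since any non-trivial $\zzz_p$-torsion in $H_2(M^5;\zzz_p)$ would force a corresponding $\zzz_p$-fixed set incompatible with Theorem~\ref{t:3dimfiniteisotropy}. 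Once this case-by-case torsion bookkeeping is in place, the Barden--Smale classification delivers the four manifolds listed in the statement.
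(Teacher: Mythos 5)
Your overall architecture matches the paper's: Lemma~\ref{l:nofreeoralmostfree}, Proposition~\ref{p:Tn-1orbits} with Corollary~\ref{c:finiteisotropy}, and Theorem~\ref{T:4-circles} pin the number of isolated circle orbits to three or four; Lemma~\ref{l:H2a_finite_extension} gives the free rank of $H_2$; one enumerates weighted graphs, decomposes $M^5$ into disc bundles, runs Mayer--Vietoris, and closes with Barden--Smale. Two remarks, one cosmetic and one substantive.

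The cosmetic one: the paper always arranges things so that $M^5$ is the union of exactly \emph{two} disc bundles. When the weighted graph is not already a cycle, it is completed to one by adjoining geodesic arcs through regular orbits (Proposition~\ref{p:bundledecomp}, Figures~\ref{F:3_vertices_completed_arc} and~\ref{F:4_vertices_completed_arc}); the two bundles then sit over the pre-images of opposite arcs, and the isotopy between the two tube boundaries uses the right-angle meeting of consecutive fixed $3$-manifolds. This is much tidier than your three-piece scheme and keeps the Mayer--Vietoris bookkeeping short. Also, since the theorem only asserts an ``or'' in the four-orbit case, identifying $w_2$ via the Euler class is unnecessary and the paper does not attempt it.

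The substantive issue is your proposed exclusion of odd-prime torsion via Theorem~\ref{t:cohominequalities}. That inequality bounds $\sum_i \rk H^{k+2i}(\Fix;\zzz_p)$ \emph{above} by the corresponding sum for $M^5$. Thus a $\zzz_p$-torsion summand in $H_2(M^5;\zzz)$, which enlarges $H_*(M^5;\zzz_p)$, merely \emph{permits} a larger $\zzz_p$-fixed set; it does not \emph{force} one. In particular the inequality is perfectly happy with $\Fix(M^5;\zzz_p)=\lpq$, which Theorem~\ref{t:3dimfiniteisotropy} already allows, so no contradiction arises. (The paper's Corollary~\ref{c:orbitconfigs} uses the inequality in the legitimate direction: a large fixed set, e.g.\ a non-orientable $3$-manifold, forces $H_2(M^5)\neq 0$.) What actually eliminates $\lpq$ and $S^2\times S^1$ as fixed-point components in the three-orbit case, and hence kills the unwanted torsion, is the boundary-matching Mayer--Vietoris contradiction of Lemma~\ref{l:nolpqs2xs1}: a $T^2$-invariant tube over $\lpq$ or $S^2\times S^1$ has boundary a circle bundle over that $3$-manifold, which cannot glue coherently to the $S^3\times S^1$ boundary of a tube over the remaining isolated circle orbit. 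Lemma~\ref{l:caseB} does the analogous job for the graphs containing a degree-three vertex. You would need to reproduce those explicit computations rather than appeal to the cohomological inequality; as proposed, that step does not go through.
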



By the proof of proposition~\ref{p:Tn-1orbits}, in the case where there is no finite isotropy, we have the following result:


\begin{proposition}\label{P:M5-T2-CaseA}  Let $M^5$ be a closed, simply-connected, non-negatively curved $5$-manifold admitting an isometric $T^2$ action.
Suppose $M^*=S^3$ and that there is no finite isotropy.
\begin{itemize}
\item[(1)] If there are exactly three isolated circle orbits, then $M^5$ is diffeomorphic to $S^5$.\\
\item[(2)] If there are four isolated circle orbits, then $M^5$ is diffeomorphic to $S^3\times S^2$ or  $S^3\tilde{\times}S^2$.\\
\end{itemize}
\end{proposition}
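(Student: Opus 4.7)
My plan is to compute $H_2(M^5;\mathbb{Z})$ directly via a Mayer--Vietoris decomposition and then invoke the Barden--Smale classification of closed simply-connected smooth $5$-manifolds. With no finite isotropy, the only non-principal orbits are the $r+1$ isolated circle orbits $N_1,\dots,N_{r+1}$, where $r+1\in\{3,4\}$ by Proposition~\ref{p:Tn-1orbits} and Theorem~\ref{T:4-circles}. Since $M^5$ is simply-connected, hence orientable, and each $N_i\cong S^1$, the rank-$4$ normal bundle of $N_i$ is orientable over $S^1$ and therefore trivial; so each equivariant tubular neighborhood satisfies $\nu(N_i)\cong S^1\times D^4$, with boundary $S^1\times S^3$.

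Set $M_0=M^5\setminus\bigsqcup_i\mathrm{int}\,\nu(N_i)$. Removing these tubes leaves $M_0$ simply-connected by a standard transversality argument, and the orbit map becomes a principal $T^2$-bundle $T^2\to M_0\to M_0^*$. Running the computation from the proof of Proposition~\ref{p:Tn-1orbits} with $n=2$, the long exact homotopy sequence of this fibration combined with $\pi_2(M_0^*)\cong\mathbb{Z}^r$ yields $\pi_2(M_0)\cong\mathbb{Z}^{r-2}$, which equals $H_2(M_0)$ by Hurewicz.

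Next I would apply Mayer--Vietoris to $M^5=M_0\cup\bigsqcup_i\nu(N_i)$ with intersection $\partial M_0=\bigsqcup_{i=1}^{r+1}(S^1\times S^3)$. Using $H_2(S^1\times S^3)=0$ and $H_2(S^1\times D^4)=0$, the relevant segment reads
\[
0\to H_2(M_0)\to H_2(M^5)\to H_1(\partial M_0)\xrightarrow{\varphi} H_1(M_0)\oplus H_1(\nu),
\]
where $H_1(\partial M_0)=\mathbb{Z}^{r+1}$ and $H_1(\nu)=\mathbb{Z}^{r+1}$. The map $\varphi$ restricts to the identity on the $\nu$-factor, because the generator of $\pi_1(S^1\times S^3)$ is sent to the generator of $\pi_1(S^1\times D^4)$ under the inclusion $\partial\nu(N_i)\hookrightarrow\nu(N_i)$. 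Therefore $\varphi$ is injective, and $H_2(M^5)\cong H_2(M_0)\cong\mathbb{Z}^{r-2}$.

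Finally, I would invoke Barden--Smale. When $r=2$ (three isolated orbits), $H_2(M^5)=0$ forces $M^5\cong S^5$. When $r=3$ (four isolated orbits), $H_2(M^5)=\mathbb{Z}$ forces $M^5$ to be one of the two $S^3$-bundles over $S^2$, distinguished by $w_2$. The main technical point is triviality of the normal bundles of the $N_i$; once that is in hand, everything else reduces to a routine Mayer--Vietoris chase combined with the homotopy computation already carried out in Proposition~\ref{p:Tn-1orbits}.
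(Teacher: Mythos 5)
Your proof is correct and takes essentially the same route the paper does: the paper simply says the proposition follows ``by the proof of proposition~\ref{p:Tn-1orbits},'' and what you have written is exactly the fleshed-out version of that, namely the homotopy-sequence computation of $\pi_2(M_0)\cong\mathbb{Z}^{r-2}$ from the principal $T^2$-bundle over $M_0^*\simeq\vee^r S^2$, followed by the Mayer--Vietoris argument (using triviality of the normal bundles $\nu(N_i)\cong S^1\times D^4$ and injectivity of $H_1(\partial M_0)\to H_1(\nu)$) to transfer this to $H_2(M^5)\cong\mathbb{Z}^{r-2}$, and then Barden--Smale. The one point worth stressing, which you get right but which the paper leaves implicit, is that the argument yields not merely the free rank but freeness of $H_2(M^5)$ itself (the kernel of a surjection $\mathbb{Z}^r\to\mathbb{Z}^2$ is free of rank $r-2$), which is what licenses the jump to $S^5$ in the three-orbit case.
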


We now consider the case where the $T^2$ action admits finite isotropy groups. We will devote the rest of this section to the proof of the following proposition:


\begin{proposition}\label{p:finiteisotropy}  Let $M^5$ be a closed, simply-connected, non-negatively curved $5$-manifold with an isometric $T^2$ action. Suppose that $M^*=S^3$ and there is finite isotropy.
\begin{itemize}
\item[(1)]  If there are exactly three isolated circle orbits, then $M^5$ is diffeomorphic to $S^5$ or the Wu manifold, $SU(3)/SO(3)$.\\
\item[(2)]  If there are four isolated circle orbits, then $M^5$ is diffeomorphic to $S^3\times S^2$ or  $S^3\tilde{\times}S^2$.\\
\end{itemize}
\end{proposition}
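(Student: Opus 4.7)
The plan is to reduce the classification to the Barden--Smale theorem by pinning down $H_2(M^5;\mathbb{Z})$ and, where necessary, the class $w_2$. Lemma~\ref{p:R} already gives the free rank of $H_2(M^5)$: rank $0$ in case (1) and rank $1$ in case (2). So what remains is (a) the torsion part of $H_2$ and (b) separating $S^3\times S^2$ from $S^3\tilde\times S^2$ by $w_2$. Since the Wu manifold is the unique simply-connected $5$-manifold with $H_2=\mathbb{Z}_2$, case (1) will follow once one shows that the torsion of $H_2(M^5)$ is either trivial or equal to $\mathbb{Z}_2$, and case (2) will follow once the torsion is shown to vanish.

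The first step is to enumerate the admissible weighted graphs $W\subset M^*=S^3$ using Theorems~\ref{t:3dimfiniteisotropy} and~\ref{T:unknotted_cycle} together with Proposition~\ref{p:nonorientable}. In case (2) the graph has no claws or weighted trees, each arc lifts to an orientable $3$-manifold ($S^3$, $L_{p,q}$ or $S^2\times S^1$), and any cycle is unique and unknotted. In case (1) the same applies, except that a weighted claw is now allowed and its three arcs may lift to $\rrr P^2\times S^1$ or $S^2\tilde\times S^1$ in addition to the orientable options. One then lists the combinatorially possible shapes (chains, single arcs, a single unknotted cycle, a claw with two- or three-vertex attachments, etc.) that are consistent with the number of isolated circle orbits and the bound of four coming from Theorem~\ref{T:4-circles}.

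The second, and longest, step is an equivariant decomposition of $M^5$ fitted to each graph. Around each isolated circle orbit one has an equivariant tubular neighborhood diffeomorphic to a linear disc bundle $D^4\times_{T^1}T^2$; around the preimage of each arc one has a tube over a cohomogeneity-one $3$-manifold of the type listed above; and these pieces are glued along principal $T^2$-orbit bundles over arcs and $2$-spheres in $S^3\setminus W$. The resulting Mayer--Vietoris sequence, combined with the fact that $M^*\setminus W^* \subset S^3$ is explicit, reduces $H_2(M^5;\mathbb{Z})$ and $H_2(M^5;\mathbb{Z}_p)$ to a small linear-algebra computation involving the rotation numbers on the normal $S^3$ of each fixed circle (these are encoded in the $L_{p,q}$ labels of neighboring arcs). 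One then imposes Corollary~\ref{c:orbitconfigs} and the Chang--Skjelbred-type inequality of Theorem~\ref{t:cohominequalities}, which forces $H_2(M^5;\mathbb{Z}_2)\neq 0$ whenever a non-orientable $\rrr P^2\times S^1$ or $S^2\tilde\times S^1$ component is present.

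Putting this together: in case~(2), all fixed components are orientable, the Mayer--Vietoris computation yields $H_2(M^5)=\mathbb{Z}$ with no torsion, and $w_2$ is read off from the parity of the gluing data, giving $S^3\times S^2$ or $S^3\tilde\times S^2$ by Barden--Smale. In case~(1), if $W$ contains no claw then all fixed components are orientable lens spaces or $S^3$'s; the Mayer--Vietoris argument then forces $H_2(M^5)=0$, so $M^5\cong S^5$. If on the other hand $W$ contains a weighted claw, Corollary~\ref{c:orbitconfigs} rules out $S^5$, while the Chang--Skjelbred inequality and the absence of any free part of $H_2$ combine to pin the torsion down to exactly $\mathbb{Z}_2$, yielding $M^5\cong SU(3)/SO(3)$. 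I expect the principal obstacle to be the bookkeeping in the Mayer--Vietoris step, especially verifying that the possible isotropy weights on neighboring arcs are compatible and that the $\mathbb{Z}_2$-torsion is produced exactly once in the claw case---this is where the unknottedness of cycles (Theorem~\ref{T:unknotted_cycle}(4)) and the explicit product structure near the claw's central $T^2/(\zzz_2\times\zzz_2)$ orbit are essential.
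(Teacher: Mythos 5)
Your plan shares the raw ingredients of the paper's proof (the weighted-graph enumeration, a Mayer--Vietoris computation of $H_2$, Corollary~\ref{c:orbitconfigs}, Theorem~\ref{t:cohominequalities}, and the Barden--Smale classification), but it proposes a genuinely different and, as written, incomplete decomposition scheme. You describe a multi-piece equivariant decomposition of $M^5$, with one piece per vertex and one piece per arc glued along principal orbit bundles, followed by a ``linear-algebra computation involving the rotation numbers.'' The paper instead makes the crucial observation that the adjacent edges of the weighted graph meet at angle $\pi/2$ at each isolated circle orbit, so the unit normal field to the preimage $N^3$ of one arc is \emph{tangent} to the preimage of an adjacent arc. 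This lets one first \emph{complete} any partial graph to an unknotted cycle by adding geodesic edges of principal orbits, and then isotope the boundary of a tube around $N^3$ across to the boundary of a tube around the ``opposite'' part of the cycle, producing a clean \emph{two}-piece disc-bundle decomposition $M^5=D(N_1)\cup D(N_2)$ (Proposition~\ref{p:bundledecomp} and the arguments around Figures~\ref{F:3_vertices_completed_arc},~\ref{F:4_vertices_completed_arc}). The entire $H_2$ calculation then reduces to a single Mayer--Vietoris sequence for a pair of disc bundles over $3$-manifolds from the short list, which is what makes the bookkeeping you flagged as the ``principal obstacle'' actually tractable.

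Beyond the missing decomposition, there is a concrete gap in case~(1) without a claw. You assert that the fixed components are ``orientable lens spaces or $S^3$'s'' and that ``the Mayer--Vietoris argument then forces $H_2(M^5)=0$.'' But Theorem~\ref{t:3dimfiniteisotropy} also allows $S^2\times S^1$ in this situation, and lens spaces $L_{p,q}$ carry $H_1=\mathbb{Z}_p$, so neither conclusion is automatic: an uncontrolled multi-piece Mayer--Vietoris could \emph{a priori} produce torsion in $H_2(M^5)$ from lens-space pieces, or a $\mathbb{Z}$ from an $S^2\times S^1$. The paper resolves this in Lemma~\ref{l:nolpqs2xs1}, which uses the two-piece decomposition (fixed $3$-manifold on one side, the remaining isolated circle orbit on the other) to show by Mayer--Vietoris that $L_{p,q}$ and $S^2\times S^1$ simply cannot occur as finite-isotropy components for graphs~(1)--(3) of Figure~\ref{F:3_vertices_wgraphs}; an analogous argument (Lemma~\ref{l:caseB}) pins down the $\zzz_k$-component, $k\neq 2$, to $S^3$ or $\mathbb{R}P^3$ in the remaining graphs. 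Your proposal has no mechanism to rule these out, so the claim that $H_2(M^5)=0$ or $\mathbb{Z}_2$ in case~(1), and $H_2(M^5)=\mathbb{Z}$ in case~(2), is not established. (As a minor point, computing $w_2$ to distinguish $S^3\times S^2$ from $S^3\tilde{\times}S^2$ is unnecessary: the proposition's conclusion in case~(2) allows both bundles, and both indeed occur.)
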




We first note that combining  proposition~\ref{p:singsets}, theorem \ref{T:4-circles} and theorem~\ref{T:unknotted_cycle},
we may now make a complete list of all the graphs that may occur in the case where we have three isolated circle orbits and in the case where we have four isolated circle orbits.
We list these graphs in  figures~\ref{F:3_vertices_wgraphs}, \ref{F:claw_wgraphs} and \ref{F:4_vertices_wgraphs}.

Recall now that, by theorem~\ref{T:unknotted_cycle}, if the weighted graph contains a cycle, then this cycle must be unknotted in $M^*=S^3$.  We will now show in all cases where we have a cycle that we may decompose the 
manifold as a union of disc bundles, where at least one of the disc bundles is over one arc of the cycle.

We have the following proposition:


\begin{proposition}
\label{p:bundledecomp}
 Let $M^5$ be a closed, simply-connected, non-negatively curved $5$-manifold with an isometric $T^2$ action. Suppose that $M^*=S^3$ and there is finite isotropy.

Suppose, in addition,  that the weighted graph, $W$, corresponding to the singular set of the action contains a cycle $K^1$, corresponding to graphs (3) and (4) in figure~\ref{F:3_vertices_wgraphs}, graph (2) in figure~\ref{F:claw_wgraphs}, and graph (5) in figure~\ref{F:4_vertices_wgraphs}. 
Then the following are true.
\begin{itemize}
\item[(1)] 
If $W$ is graph (3) in figure~\ref{F:3_vertices_wgraphs}, then $M^5$ decomposes as the union of a disc bundle over a $3$-dimensional submanifold $N^3\subset M^5$ fixed by finite cyclic isotropy, corresponding to the pre-image of an arc in $K^1$, and a disc bundle over the remaining circle orbit not contained in $N^3$.
\\

\item[(2)] If $W$ is graph (4) in figure~\ref{F:3_vertices_wgraphs}, then $M^5$ decomposes as a union of disc bundles over two $3$-dimensional submanifolds. One of these $3$-manifolds corresponds to the fixed point set of $\zzz_k$ isotropy, $k\neq 2$, and the other corresponds to the pre-image of the arc between the remaining isolated circle orbit and an exceptional orbit $T^2/\zzz_2$ which projects to an interior point of the arc of $\zzz_2$ isotropy. 
\\

\item[(3)] If $W$ is graph (2) in figure \ref{F:claw_wgraphs}, then $M^5$ decomposes as the union of disc bundles over two $3$-dimensional submanifolds. One of these $3$-manifolds corresponds to the pre-image of the arc with $\zzz_k$ isotropy, $k\neq 2$, and the other to the pre-image of the arc with $\zzz_2$ isotropy containing the remaining isolated circle orbit.
\\

\item[(4)] If $W$ is graph (5) in figure~\ref{F:4_vertices_wgraphs}, then $M^5$ decomposes as the union of two disc bundles over two disjoint $3$-dimensional submanifolds fixed by finite isotropy (although not necessarily the same group). 

\end{itemize}
\end{proposition}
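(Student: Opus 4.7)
The plan is to construct, for each of the four graph configurations, an equivariantly invariant hypersurface in $M^*\simeq S^3$ that cleanly separates the relevant portion of the singular graph $W$ into two pieces, then pull this hypersurface back via the orbit map $\pi\colon M^5\to M^*$ to decompose $M^5$ into tubular neighborhoods of two invariant submanifolds. The engine driving this is Theorem~\ref{T:unknotted_cycle}: since $K^1$ is unknotted in $S^3$, we may place it in standard position and exploit the resulting solid-torus decomposition of $M^*$, together with the fact that tubular neighborhoods of invariant submanifolds carry disc-bundle structure via the equivariant slice theorem.

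For Case~(1), I would fix one arc $A_1$ of the cycle and set $N^3=\pi^{-1}(A_1)$; by Proposition~\ref{p:singsets}(2), $N^3$ is a $3$-manifold fixed by $\zzz_k$, and a small equivariant tubular neighborhood $D(N^3)$ is a $D^2$-bundle over $N^3$. In $M^*$, the complement of a small tubular neighborhood of $A_1$ deformation retracts onto the isolated point corresponding to the remaining circle orbit, using the concavity of the distance function from that point guaranteed by the non-negative curvature of the Alexandrov space $M^*$ (soul-type argument). Lifting this retraction equivariantly produces the desired $D^4$-bundle over the remaining circle orbit. For Case~(4), the procedure is analogous but symmetric: both "souls" are $3$-manifolds arising as pre-images of arcs, and a separating $2$-sphere in $M^*$ meeting $K^1$ transversally in two regular points of the arcs (which exists by unknottedness) has pre-image an $S^1$-bundle over $S^2$ serving as the common boundary of the two disc bundles.

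For Cases~(2) and~(3), the situation is subtler because one of the two $3$-manifolds is the pre-image of an arc that is \emph{not} contained in the cycle $K^1$: in Case~(2) it is the arc joining the cycle-disjoint isolated circle orbit to an exceptional $T^2/\zzz_2$ orbit, and in Case~(3) it is the arc of $\zzz_2$ isotropy containing the cycle-disjoint circle orbit. The key point is to use Kleiner's Isotropy Lemma along minimizing geodesics from the cycle to the isolated orbit in $M^*$ to identify how the isotropy distributes, ensuring that the pre-image of the full arc is indeed a cohomogeneity-one $T^2$-invariant $3$-manifold (this uses Proposition~\ref{p:singsets} to rule out spurious isotropy jumps along the arc). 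The separating $2$-sphere in $M^*$ is then chosen to meet the remaining arcs transversally in regular points, avoiding the exceptional $T^2/(\zzz_2\times\zzz_2)$ vertex, and the argument proceeds as in Case~(1).

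The main obstacle, which I expect to absorb most of the technical work, is verifying that in each case the pre-image of the complement of a neighborhood of one chosen soul really does deformation retract equivariantly onto the other soul, rather than onto something more complicated. The cleanest way is to show that the non-negative curvature of $M^*$, combined with the rigidity provided by the unknottedness of $K^1$ and the restrictions on isotropy representations (Proposition~\ref{p:3dimfixed}), forces the gradient flow of an appropriate concave distance function in $M^*$ to carry the complementary region onto the image of the second soul; one then lifts this flow equivariantly using the fact that the flow lines lie in the principal part of the action away from the soul itself.
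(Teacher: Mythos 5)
Your overall architecture coincides with the paper's: in each case you identify the same invariant $3$-submanifold $N^3_1=\pi^{-1}(A_1^*)$ (and, in cases (2)--(4), the same second submanifold $N^3_2$, constructed from a geodesic arc that uses the unknottedness of $K^1$), you invoke the equivariant slice theorem to see that their tubular neighborhoods carry disc-bundle structures, and you aim to show these two neighborhoods exhaust $M^5$.

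The gap is in the mechanism by which you claim the complement of $D(N^3_1)$ deformation retracts onto the other soul. You appeal to ``the concavity of the distance function from that point,'' but the distance function from an interior point of a non-negatively curved Alexandrov space is not concave; concavity is available only for the distance from the boundary of the space (or Busemann-type functions), and even replacing the point by $\partial(M^*\setminus U(A_1^*))$, the soul of the complement is not a priori the remaining circle orbit (or $A_2^*$). Your closing paragraph acknowledges this but only gestures at ``an appropriate concave distance function'' without producing one, and it is exactly here that the paper supplies a geometric input you do not state: at each isolated circle orbit corresponding to an endpoint of an arc in $K^1$, the two incident arcs meet at angle exactly $\pi/2$. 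This comes from the slice representation at the circle orbit --- the isotropy $T^1$ acts on the normal $S^3\subset\ccc^2$ and the two adjacent finite isotropy groups fix the two Hopf-orthogonal circles $S^1\times\{0\}$ and $\{0\}\times S^1$, which project to antipodal points of the thin $S^2(1/2)$. Consequently the unit normal field to $N^3_1$ near the endpoints of $A_1^*$ is tangent to the adjacent $3$-dimensional fixed-point sets, and one can explicitly isotope $\partial D(N^3_1)$ along that normal field to $\partial D(C^1)$ (case (1)) or to $\partial D(N^3_2)$ (cases (2)--(4)). This elementary isotopy replaces the soul/gradient-flow heuristic; without the $\pi/2$-angle observation your argument does not close.

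Two minor points: the concavity/Kleiner considerations you invoke for cases (2) and (3) to ensure the auxiliary arc lifts to a cohomogeneity-one $3$-manifold are consistent with the paper; and your phrase ``separating $2$-sphere meeting $K^1$ transversally in two regular points'' in case (4) is a reasonable picture but still requires the $\pi/2$-angle input to see that the pre-image of that $2$-sphere is the common boundary of the two disc bundles.
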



\begin{figure}
\psfrag{A}{(1)}
\psfrag{B}{(2)}
\psfrag{C}{(3)}
\psfrag{D}{(4)}
\centering
\includegraphics[scale=0.5]{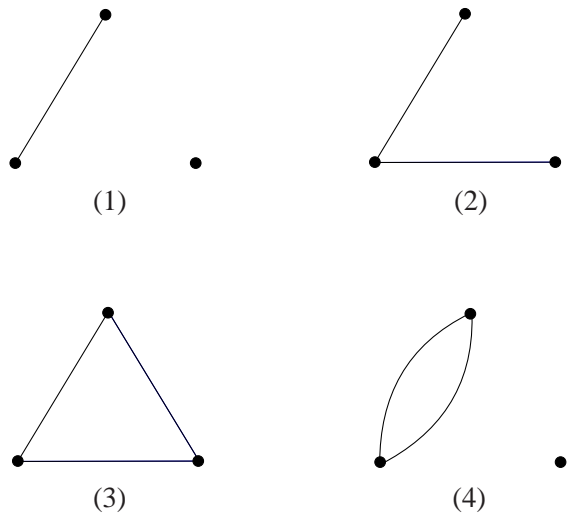}
\caption{Possible weighted graphs when there are exactly three isolated circle orbits and  only finite cyclic isotropy.}
\label{F:3_vertices_wgraphs}
\end{figure}


\begin{figure}
\psfrag{A}{(1)}
\psfrag{B}{(2)}
\centering
\includegraphics[scale=0.5]{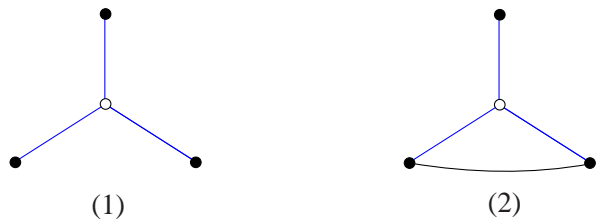}
\caption{Possible weighted graphs when there are exactly three isolated circle orbits and  an isolated exceptional orbit.}
\label{F:claw_wgraphs}
\end{figure}


\begin{figure}
\psfrag{A}{(1)}
\psfrag{B}{(2)}
\psfrag{C}{(3)}
\psfrag{D}{(4)}
\psfrag{E}{(5)}
\centering
\includegraphics[scale=0.7]{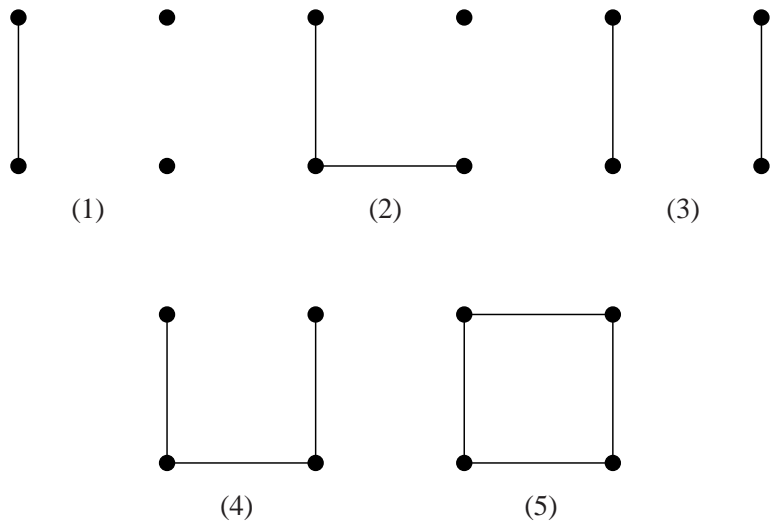}
\caption{Possible weighted graphs when there are exactly four isolated circle orbits and  finite cyclic isotropy.}
\label{F:4_vertices_wgraphs}
\end{figure}


\begin{proof}

We will first prove parts (1) and (4) corresponding, respectively, to graph (3) in figure~\ref{F:3_vertices_wgraphs} and graph (5) in figure~\ref{F:4_vertices_wgraphs}. In both cases the weighted graph is a cycle $K^1$. 

Fix an arc $A_1^*$ in $K^1$ corresponding to a fixed point set component of finite cyclic isotropy $\mathbb{Z}_k$. Note that whether we have three or four isolated circle orbits, the corresponding edges of the weighted cycle $K^1$ in $M^*$ form the angle $\pi/2$. Thus, at isolated circle orbits corresponding to the endpoints of the arc $A^*_1$, the normal space to the $3$-dimensional submanifold $N^3_{\mathbb{Z}_k}=\pi^{-1}(A_1^*)$, fixed by $\mathbb{Z}_k$, will be the tangent space to the $3$-dimensional submanifold fixed by finite cyclic isotropy, corresponding to the lift of arcs adjacent to $A^*_1$. In graph (3) in figure~\ref{F:3_vertices_wgraphs} the cycle $K^1$ contains three edges, and there are three $3$-dimensional submanifolds fixed by finite isotropy, each one corresponding to the lift of an arc in $K^1$. Consider $N^3_{\zzz_k}=\pi^{-1}(A_1^*)$, and a unit normal vector field to it. Since the normal vector field is tangent to the other two $3$-dimensional submanifolds fixed by finite isotropy, we may isotope the boundary of a tubular neighborhood around $N^3_{\zzz_k}$ along this vector field to the boundary of a tubular neighborhood around the remaining isolated circle orbit $C^1$. Therefore we may decompose $M^5$ as a union of disc bundles over $C^1$ and over $N^3_{\zzz_k}$. 

The same argument in the preceding paragraph works for  graph (5) in figure~\ref{F:4_vertices_wgraphs}, corresponding to the case of four isolated circle orbits. Here we will  isotope the boundary of a tubular neighborhood around $N^3_1$, corresponding to the pre-image of an arc $A_1^*\subset M^*$ in the cycle $K^1$,  to the boundary of a tubular neighborhood around $N^3_2$,
corresponding to the pre-image of the arc opposite to $A_1^*$.


We now prove part (2), corresponding to graph  (4) in figure~\ref{F:3_vertices_wgraphs}. Recall that in this case one of the edges in  $K^1$ corresponds to orbits with  isotropy $\zzz_2$, while the other one corresponds to orbits with isotropy $\zzz_k$, $k>2$. We will denote the arc in $K^1$ corresponding to a fixed point set component of isotropy $\zzz_2$ by $A_0^*$, and we will let $A_1^*$ be the arc in $K^1$ corresponding to the fixed point set component with finite isotropy $\zzz_k$. We now form an arc $A^*_2$ in $M^*$ by joining the vertex not contained in $K^1$ to $A^*_1$ via a shortest geodesic in $M^*$. The interior of this arc consists of principal orbits and the pre-image of this arc is a cohomogeneity one $3$-manifold, $N^3_2$.  Proceeding as in cases (1) and (4), we may decompose $M^5$ as a union of disc bundles over $N^3_1=\pi^{-1}(A^*_1)$ and $N^3_2$.

To prove part (3), we let  $A_1^*$ be the arc not contained in the weighted claw, that is, the arc containing two isolated circle orbits and corresponding to a fixed point set component of isotropy $\zzz_k$, $k>2$. We let $A_2^*$ be the arc in the claw containing the  isolated circle orbit not contained in $A^*_1$. Proceeding as above,  we may decompose $M^5$ as a union of disc bundles over $N^3_1=\pi^{-1}(A^*_1)$ and $N^3_2=\pi^{-1}(A_2^*)$.

\end{proof}

We are now ready to prove proposition~\ref{p:finiteisotropy}. Our strategy will be to analyze the weighted graphs grouped into three separate cases, where the first two cases  correspond to part (1) of  proposition~\ref{p:finiteisotropy} and the third case corresponds to part (2) of proposition~\ref{p:finiteisotropy}.
 The first case will be all the graphs in figure \ref{F:3_vertices_wgraphs}, with the exception of graph (4). The second case will consist of graph (4) of figure \ref{F:3_vertices_wgraphs} and graphs (1) and (2) of figure \ref{F:claw_wgraphs}. The third case will be the graphs of figure \ref{F:4_vertices_wgraphs}.


\begin{proof}[Proof of proposition~\ref{p:finiteisotropy} - (1)]

The weighted graph corresponding to the singular set of the action is one of those listed in figure~\ref{F:3_vertices_wgraphs}  or figure~\ref{F:claw_wgraphs}. It follows from the discussion in section~\ref{SS:component_finite_iso} that if the weighted graph is one of those in figure~\ref{F:3_vertices_wgraphs}, then  a fixed point set component of finite isotropy can only be one of $S^3$, $\lpq$ or $S^2\times S^1$; if the weighted graph is one of those in figure~\ref{F:claw_wgraphs}, then the fixed point set components of finite isotropy corresponding to arcs in the claw can only be $S^2\tilde{\times} S^1$  or $\rrr P^2\times S^1$ and the corresponding isotropy subgroup is  a $\zzz_2$ subgroup of $T^2$ in each case.

As mentioned above, we have divided the proof into three cases: case A, where the graphs are all those from figure \ref{F:3_vertices_wgraphs} with the exception of graph (4); case B, corresponding to  graphs (4) from figure \ref{F:3_vertices_wgraphs} and (1) and (2) from figure \ref{F:claw_wgraphs}.
\\


\noindent \emph{Case A.} We have the following lemma.


\begin{lemma}\label{l:nolpqs2xs1} Let $T^2$ act isometrically on $M^5$, a closed, simply-connected $5$-manifold of non-negative curvature and suppose that $M^*=S^3$. If there are exactly  $3$ isolated circle orbits and the weighted graph of the action is one of graphs (1), (2) or (3) from figure \ref{F:3_vertices_wgraphs}, then  neither $\lpq$ or $S^2\times S^1$ may be a component of a fixed point set of finite isotropy.
\end{lemma}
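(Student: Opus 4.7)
The plan is to run a Mayer--Vietoris calculation showing that, should either $L_{p,q}$ or $S^2\times S^1$ appear as a component of finite cyclic isotropy in one of these configurations, then $H_1(M^5)\neq 0$, contradicting $\pi_1(M^5)=0$. Graph (1) contains no arc of finite cyclic isotropy and so the assertion holds vacuously; I may therefore assume the weighted graph is (2) or (3). Let $N^3\subset M^5$ denote the hypothesized component of finite cyclic isotropy (the preimage of an arc), and let $C\cong S^1$ be an isolated circle orbit disjoint from $N^3$: the isolated vertex off the arc in graph (2), and the vertex of the triangle opposite to the chosen arc in graph (3).

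First I would establish the decomposition $M^5=D(N^3)\cup M_1$, where $D(N^3)$ is a tubular neighborhood (a $D^2$-bundle over $N^3$) and $M_1:=M^5\setminus\mathrm{int}\,D(N^3)$ deformation retracts onto $C$. For graph (3) this is Proposition~\ref{p:bundledecomp}(1), with the remaining vertex playing the role of $C$. For graph (2), observe that $M^*\setminus D(A)$ is a closed $3$-ball in $S^3=M^*$ whose interior contains only the singular vertex corresponding to $C$; the straight-line retraction in $M^*$ onto that vertex lifts to a $T^2$-equivariant retraction of $M_1$ onto $C$. Set $E:=\partial D(N^3)=\partial M_1$, a closed orientable $4$-manifold (the $\zzz_k$-rotation on the normal $D^2$ preserves orientation).

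Next I would pin down $H_*(E;\zzz)$. Since $C\cong S^1$ has trivial normal bundle in the orientable $M^5$, the tubular neighborhood $D(C)$ is diffeomorphic to $S^1\times D^4$, with $\partial D(C)\cong S^1\times S^3$. Applying Lefschetz duality $H^k(M_1,E)\cong H_{5-k}(M_1)$ to $M_1\simeq S^1$, and running the long exact sequence of the pair $(M_1,E)$, one obtains $H_*(E;\zzz)\cong H_*(S^1\times S^3;\zzz)$; in particular $H_2(E)=0$ and $H_1(E)=\zzz$. Moreover the same long exact sequence shows that the inclusion $E\hookrightarrow M_1$ induces an isomorphism $H_1(E)\to H_1(M_1)=H_1(C)$.

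The Mayer--Vietoris sequence for $M^5=D(N^3)\cup M_1$ then yields
\[
0\to H_2(N^3)\to H_2(M^5)\to H_1(E)\xrightarrow{\phi} H_1(N^3)\oplus H_1(C)\to H_1(M^5)=0,
\]
so $\phi$ must be surjective. Since $\phi(1)=(\alpha,\pm 1)$ for some $\alpha\in H_1(N^3)$, the image of $\phi$ is cyclic. If $N^3=S^2\times S^1$, then the target $\zzz\oplus\zzz$ has rank $2$ and cannot be generated by a single element. If $N^3=L_{p,q}$ with $p\geq 2$, then the target $\zzz_p\oplus\zzz$ also fails to be cyclically generated: any $n(\alpha,\pm 1)$ has second coordinate $\pm n$, so hitting $(\bar 0,1)$ would force $n=\pm 1$ and $\alpha=\bar 0$, but then $(\bar 1,0)$ lies outside the image. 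In either case $H_1(M^5)\neq 0$, contradicting simple-connectivity. The principal obstacle is establishing the decomposition for graph (2), which is not directly covered by Proposition~\ref{p:bundledecomp}; once the decomposition and the identification $H_*(E)\cong H_*(S^1\times S^3)$ are in place, the Mayer--Vietoris bookkeeping is routine.
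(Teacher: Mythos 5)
Your overall strategy---decompose $M^5$ as a disc bundle over $N^3$ glued to a neighborhood $M_1$ of an isolated circle orbit $C$, and then derive a contradiction from Mayer--Vietoris and $H_1(M^5)=0$---is the strategy the paper uses, and your final Mayer--Vietoris bookkeeping is correct. One genuine improvement in your write-up is the computation of $H_*(E)$ and the isomorphism $H_1(E)\to H_1(M_1)$ via Lefschetz duality applied to the pair $(M_1,E)$: the paper simply asserts that the common boundary must be $S^3\times S^1$ (as the orientable $S^3$-bundle $\partial D(C)$ over the circle), which is correct but quicker than it should be, and your duality argument fills that in cleanly without needing to identify $E$ up to diffeomorphism.

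There are, however, two problems. First, your claim that graph (1) contains no arc of finite cyclic isotropy and is therefore vacuous is almost certainly a misreading of the figure: the paper's own proof explicitly treats graph (1) (along with (2)) by ``completing it to a cycle,'' which only makes sense if graph (1) has at least one arc of finite isotropy together with an isolated vertex. Second---and you flag this yourself as ``the principal obstacle''---your sketch for graph (2), lifting a ``straight-line retraction'' of the complementary $3$-ball in $M^*$ to a $T^2$-equivariant retraction of $M_1$ onto $C$, is not a proof. The complementary region generically still contains another arc of finite cyclic isotropy, whose preimage is a $3$-dimensional stratum that any proposed equivariant retraction has to respect; nothing in a generic retraction of the base $3$-ball does this. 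The paper's device here is the right one: complete the weighted graph to an \emph{unknotted cycle} by inserting arcs of principal orbits (figure~\ref{F:3_vertices_completed_arc}), so that near each endpoint of the chosen arc the unit normal field to $N^3$ is tangent to the adjacent $3$-manifold (genuine or auxiliary), and then isotope $\partial D(N^3)$ along this field onto $\partial D(C)$ exactly as in the proof of proposition~\ref{p:bundledecomp}. This is the missing ingredient for graphs (1) and (2); once you have the double disc bundle decomposition, the rest of your argument goes through.
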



\begin{proof}
If the weighted graph is one of graphs (1) or (2), which do not not contain a cycle, then we may complete it to a cycle,  adding edges corresponding to shortest geodesics consisting of regular points in the orbit space, so that each vertex in the graph has degree $2$ (see figure~\ref{F:3_vertices_completed_arc}). We may then decompose $M^5$ as the union of a disc bundle over a fixed point set component of finite isotropy and the remaining isolated circle orbit. A tubular neighborhood around the isolated circle orbit will be a $D^4$-bundle over $S^1$ with boundary an $S^3$ bundle over $S^1$. A tubular neighborhood around the fixed point set component of finite isotropy will be a $D^2$-bundle over $\lpq$ or $S^2\times S^1$ and therefore the boundary of both tubular neighborhoods must be $S^3\times S^1$. When we consider the Mayer-Vietoris sequence of this decomposition we immediately obtain a contradiction and therefore neither of these two manifolds may occur as a fixed point set component of finite isotropy.
 \end{proof}
 

\noindent \emph{Case B.}  We have the following lemma.

\begin{lemma}\label{l:caseB} Let $T^2$ act isometrically on $M^5$, a closed, simply-connected $5$-manifold of non-negative curvature and suppose that $M^*=S^3$, there are exactly three isolated circle orbits and the singular set corresponds to graph (4) of figure \ref{F:3_vertices_wgraphs} or graph (2) of figure \ref{F:claw_wgraphs}. Then a fixed point set component of finite isotropy $\zzz_k$, $k\neq2$, can only be one of  $S^3$ or $\rrr P^3$.
\end{lemma}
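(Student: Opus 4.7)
The plan is to apply the decomposition $M^5 = D(N_1^3)\cup_\Sigma D(N_2^3)$ from proposition~\ref{p:bundledecomp}(2)--(3) and derive a contradiction from the simple-connectedness and orientability of $M^5$ when $N_1^3 \notin \{S^3, \rrr P^3\}$. The first step is to identify $N_2^3$: in both graph configurations the induced effective $T^2$-action on $N_2^3$ is of cohomogeneity one with exactly one singular $T^1$-orbit (above the remaining isolated circle orbit $p_3$) and one exceptional $T^2/\zzz_2$-orbit (above the endpoint of $A_2^*$ meeting either the $\zzz_2$-arc of the cycle or the centre of the claw). By the classification in subsection~\ref{SS:3MT2},
\begin{equation*}
N_2^3 \in \{S^2\tilde\times S^1,\ \rrr P^2 \times S^1\},
\end{equation*}
both non-orientable. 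Meanwhile theorem~\ref{t:3dimfiniteisotropy} gives $N_1^3 \in \{S^3, \lpq, S^2\times S^1\}$, and the task is to exclude $\lpq$ for $p>2$ and $S^2\times S^1$.

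Suppose for contradiction that $N_1^3$ is $\lpq$ with $p>2$ or $S^2\times S^1$. The Mayer--Vietoris sequence together with $H_1(M^5)=0$ yields the surjection
\begin{equation*}
H_1(\Sigma) \twoheadrightarrow H_1(N_1^3)\oplus H_1(N_2^3).
\end{equation*}
A Gysin-sequence computation for the $S^1$-bundle $\Sigma \to N_1^3$ in terms of its Euler class $e \in H^2(N_1^3)$ gives $H_1(\Sigma) = \zzz \oplus \zzz_{\gcd(e,p)}$ in the lens-space case and $\zzz^2$ or $\zzz \oplus \zzz_{|e|}$ in the $S^2\times S^1$ case. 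When $N_2^3 = \rrr P^2\times S^1$, an invariant-factor comparison shows that the target carries a minimum generating set of three elements while the source is always $2$-generated, giving an immediate contradiction. When $N_2^3 = S^2\tilde\times S^1$, the surjectivity instead forces $e = 0$, so that $\Sigma \cong N_1^3 \times S^1$ and $\pi_1(\Sigma) = \pi_1(N_1^3) \oplus \zzz$.

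The final contradiction comes from viewing $\Sigma$ as an $S^1$-bundle over $N_2^3 = S^2\tilde\times S^1$. Since $\Sigma$ inherits an orientation from $M^5$ while $N_2^3$ is non-orientable, this bundle must be non-orientable, and its monodromy around an orientation-reversing loop in $N_2^3$ acts on the fiber $\pi_1(S^1) = \zzz$ by $-1$. The fibration long exact sequence then presents $\pi_1(\Sigma)$ as a twisted extension
\begin{equation*}
1 \to \zzz/n \to \pi_1(\Sigma) \to \pi_1(N_2^3) = \zzz \to 1,
\end{equation*}
in which $\zzz$ acts on $\zzz/n$ by $-1$. The resulting semidirect product $\zzz_n \rtimes_{-1} \zzz$ is abelian precisely when $n \in \{1, 2\}$ (with torsion subgroup at most $\zzz_2$) and non-abelian otherwise; in no case can it coincide with the abelian product $\pi_1(N_1^3) \oplus \zzz$ when $\pi_1(N_1^3)$ has a torsion element of order exceeding $2$ (the $\lpq$ case with $p>2$) or is infinite cyclic (the $S^2\times S^1$ case, where the target $\zzz^2$ is never realised). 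This completes the contradiction and forces $N_1^3 \in \{S^3, \rrr P^3\}$. The main obstacle is the careful twisted-extension bookkeeping across the several $(N_1^3, N_2^3)$ pairs, but in each the mismatch is detected directly by comparing abelian groups.
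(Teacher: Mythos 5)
Your overall strategy---exploit the disc-bundle decomposition $M^5 = D(N_1^3)\cup_\Sigma D(N_2^3)$ from Proposition~\ref{p:bundledecomp}(2)--(3) and obtain a contradiction from the Mayer--Vietoris / homotopy sequences---is exactly the route the paper takes (the published proof compresses this entire argument into one sentence citing the Mayer--Vietoris sequence), and most of your details check out: the identification of $N_2^3$ as a cohomogeneity-one $T^2$-manifold with one singular $T^1$-orbit and one $T^2/\zzz_2$-orbit giving $N_2^3\in\{S^2\tilde\times S^1,\rrr P^2\times S^1\}$, the Mayer--Vietoris surjection $H_1(\Sigma)\twoheadrightarrow H_1(N_1^3)\oplus H_1(N_2^3)$, the Gysin computation of $H_1(\Sigma)$, and the twisted-extension analysis of $\pi_1(\Sigma)$ over $S^2\tilde\times S^1$.

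However, your dispatch of the $N_2^3=\rrr P^2\times S^1$ case has a gap. You claim the target $H_1(N_1^3)\oplus\zzz_2\oplus\zzz$ always has a minimum generating set of three elements, but when $N_1^3=\lpq$ with $p$ odd the Chinese Remainder Theorem gives $\zzz_p\oplus\zzz_2\oplus\zzz\cong\zzz_{2p}\oplus\zzz$, which is $2$-generated, so the ``invariant-factor comparison'' tells you nothing. The contradiction is still available, but you must look at the surjection directly: any surjection $\zzz\oplus\zzz_{\gcd(e,p)}\twoheadrightarrow\zzz_{2p}\oplus\zzz$ must send the torsion summand into the torsion of the target and the free generator onto $\pm 1$ in the free quotient; chasing a preimage of an element $(c,0)$ then forces the cyclic image of $\zzz_{\gcd(e,p)}$ to equal all of $\zzz_{2p}$, which is impossible since $\gcd(e,p)$ divides $p<2p$. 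With that repair the argument is sound. (Incidentally, the paper's proof also implicitly excludes $\rrr P^2\times S^1$ as $N_2^3$ even when $N_1^3=S^3$; your $H_1(\Sigma)=\zzz$ computation for the trivial circle bundle over $S^3$ gives the same conclusion, though the lemma as stated does not ask for it.)
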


\begin{proof} We must rule out $\lpq$, where $(p,q)\neq (2,1)$, and  $S^2\times S^1$ as fixed point set components of isotropy $\zzz_k$, $k\neq 2$.
Recall that, for both graphs under consideration,  $M^5$ decomposes as a union of disc bundles over one of $\lpq$ or $S^2\times S^1$, and over one of $S^2\tilde{\times} S^1$ or $\rrr P^2\times S^1$.  It follows from the Mayer-Vietoris sequence of this decomposition that only two possibilities do not give rise to a contradiction: that $M^5$ may be the union of disc bundles over $\rrr P^3$ and $S^2\tilde{\times} S^1$ or over $S^3$ and $S^2\tilde{\times} S^1$.
\end{proof}

With these two lemmas we may now complete the proof of part (1) of proposition~\ref{p:finiteisotropy}. From
lemma \ref{l:nolpqs2xs1} above we conclude that the only possible fixed point set components
 for  graphs (1), (2), and (3) of figure \ref{F:3_vertices_wgraphs} are $S^3$. In this case, it follows  from the
 Mayer-Vietoris sequence that $H_2(M^5)=0$ and therefore $M^5$ is diffeomorphic to $S^5$ by work of Smale and Barden \cite{Sm, Ba}.

From lemma~\ref{l:caseB}  we note that for graphs (4) of figure \ref{F:3_vertices_wgraphs} or graph (2) of figure \ref{F:claw_wgraphs} that $H_2(M^5)=0$ when $\rrr P^3$ is the fixed point set component of  isotropy $\zzz_k, k\neq 2$,  and $H_2(M^5)=\zzz_2$ when $S^3$ is. Both graphs (1) and (2) of figure \ref{F:claw_wgraphs} contain a weighted claw and in the case of graph (1), we may complete the graph, joining two disjoint arcs via edges corresponding to shortest geodesics consisting of regular points in the orbit space.   We may then decompose the manifold as a union of disc bundles over the pre-image of the arc joining two edges of $\zzz_2$ isotropy and the remaining edge of $\zzz_2$ isotropy.

We further note that in these last two cases it follows from corollary \ref{c:orbitconfigs} that $H_2(M^5)=\zzz_2$ and from theorem \ref{t:cohominequalities} that the arcs corresponding to $3$-manifolds of $\zzz_2$ isotropy must be $S^2\tilde{\times} S^1$. It now follows by \cite{Sm, Ba} that for graph (4) of figure \ref{F:3_vertices_wgraphs} $M^5$ is diffeomorphic to $S^5$ or the Wu manifold and for graphs (1) and (2) of figure \ref{F:claw_wgraphs} $M^5$ is diffeomorphic to the Wu manifold.


\begin{figure}
\centering
\includegraphics[scale=0.5]{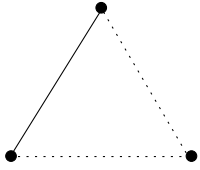}
\caption{Completing a weighted graph with three vertices to form a cycle. The solid edge corresponds to orbits with finite cyclic isotropy, while the dotted edges correspond to principal orbits.}
\label{F:3_vertices_completed_arc}
\end{figure}


We have now completed the proof of part (1) of proposition~\ref{p:finiteisotropy}.

\end{proof}



\begin{figure}
\centering
\includegraphics[scale=0.7]{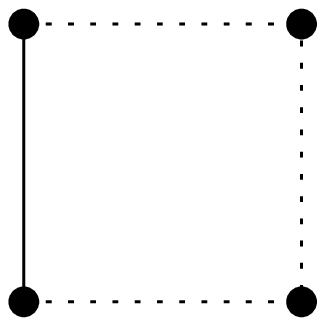}
\caption{Completing a weighted graph with edges corresponding to principal orbits to obtain a cycle. The solid edge corresponds to orbits with finite cyclic isotropy, while the dotted edges correspond to principal orbits.}
\label{F:4_vertices_completed_arc}
\end{figure}

It remains to prove (2) of proposition~\ref{p:finiteisotropy}. To do this, it suffices to show that $H_2(M^5)\cong \zzz$ for every possible fixed point set of finite isotropy and  thus,  by \cite{Sm,Ba}, $M^5$ is diffeomorphic to one of the two $S^3$ bundles over $S^2$.

\begin{proof}[Proof of proposition~\ref{p:finiteisotropy} - (2)] In this case the possible weighted graphs are shown in figure~\ref{F:4_vertices_wgraphs}. For graphs (1) through (4),  we may complete the weighted graph by joining disjoint isolated circle orbits or arcs via edges corresponding to shortest geodesics consisting of regular points in the orbit space. In this way we obtain a graph that is an unknotted cycle (see figure~\ref{F:4_vertices_completed_arc}) and now for all the possible graphs we may decompose $M^5$ as the union of two disc bundles over 
the $3$-dimensional manifolds that correspond to opposite arcs of the cycle. In this particular case, the $3$-dimensional manifold 
may be one of $S^3$, $\lpq$ or $S^2\times S^1$.
In all cases and for all possible combinations, we see that $H_2(M^5)=\zzz$ and the result follows.


\end{proof}


\section{Some examples of isometric $T^2$ actions on simply-connected, non-negatively curved $5$-manifolds}
\label{S:Examples}


\subsection{Examples of actions with codimension 2 fixed point set}

It is easy to find examples of such actions and we list a few here.


\begin{example}
Given $(\theta_1, \theta_2)\in T^2$ and $(z_1, z_2, z_3)\in S^5\subset \ccc^3$, let
\bdm
((\theta_1, \theta_2), (z_1, z_2, z_3))\longmapsto
(e^{2\pi i\theta_1}z_1, e^{2\pi i\theta_2}z_2, z_3). 
\edm
Here both circles $\theta_1$ and $\theta_2$ fix a $3$-sphere. The corresponding singular set in the orbit space is $3$ isolated singular points.
\end{example}


\begin{example}
Given $(\theta_1, \theta_2)\in T^2$ and $(z_1, z_2, x_1, x_2, x_3)\in S^3\times S^2\subset \ccc^2\times \rrr^3$, let
\bdm
((\theta_1, \theta_2), (z_1, z_2, x_1, x_2, x_3))\longmapsto
(e^{2\pi i\theta_1}z_1, e^{2\pi i\theta_2}z_2, x_1, x_2, x_3).
 \edm
 Here both circles $\theta_1$ and $\theta_2$ fix an $S^2\times S^1$ and the action is the product of the cohomogeneity one action on $S^3$ combined with the trivial action on $S^2$. The corresponding singular set in the orbit space is $4$ isolated singular points.

\end{example}


\subsection{Examples of actions with finite isotropy}
We give examples of actions on $S^5$ and on $S^3\times S^2$ with finite isotropy and with $3$ and $4$ isolated circle orbits, respectively. The action on $S^5$ was given by  Rong \cite{R} and we include it here for the sake of completeness.


\begin{example}
Given $(\theta_1, \theta_2)\in T^2$ and  $(z_1, z_2, z_3)\in S^5\subset \ccc^3$, let

\bdm
((\theta_1, \theta_2), (z_1, z_2, z_3))\longmapsto
(e^{2\pi i(\theta_1 + p\theta_2)}z_1, e^{2\pi i(\theta_1 + q\theta_2)}z_2, e^{2\pi i(\theta_1 + r\theta_2)}z_3). 
\edm
 Here there are $3$ isolated circle orbits. If $p, q, r$ are pairwise
relatively  prime and the differences $(p-q)$, $(p-r)$ and $(q-r)$ are also pairwise relatively prime, then the singular set of the action is a cycle in the orbit space and the closure of each edge corresponds to an $S^3$ fixed by finite isotropy.
\end{example}


\begin{example}
\label{EX:fixed_S3}
Given $(\theta_1, \theta_2)\in T^2$ and   $v=(z_1, z_2, x_1, x_2, x_3)\in S^3\times S^2\subset \ccc^2\times \rrr^3$, we let $(\theta_1, \theta_2)$ act on  $v$  by
\[
((\theta_1,\theta_2),v)\mapsto A(\theta_1,\theta_2)v,
\]
where $A(\theta_1,\theta_2)$ is the matrix
\[
\left( \begin{array}{ccccc}
e^{2\pi i(\theta_1 + p\theta_2)}& 0 &0&0&0\\
 0&e^{2\pi i(\theta_1 + q\theta_2)}&0&0&0\\  
 0&0& \cos(\theta_1 + r\theta_2) & \sin((\theta_1 + r\theta_2)& 0 \\
 0&0&-\sin((\theta_1 + r\theta_2) &  \cos(\theta_1 + r\theta_2) &0\\
  0&0&0&0&1\\
\end{array} \right),
\]
$p$, $q$, $r$ are pairwise relatively  prime integers, as are the differences $(p-q)$, $(p-r)$ and $(q-r)$, and, without loss of generality, $p>q>r$. Here there are $4$ isolated circle orbits and the finite groups  $\zzz_{p-r}, \zzz_{q-r}$ 
each fix a distinct $S^2\times S^1$ that has empty intersection with the other whereas the finite group $\zzz_{p-q}$ fixes two disjoint copies of $S^3$, intersecting each of the fixed $S^2\times S^1$ in an isolated circle orbit. The corresponding singular set in the orbit space is a quadrangle with vertices corresponding to isolated circle orbits and edges corresponding to arcs with finite isotropy.
\end{example}


\begin{example}
\label{EX:Wu_manifold}
Let $T^2\subset SU(3)$ act canonically on $SU(3)/SO(3)$.
There are three involutions given by  the diagonal matrices with entries $(-1, -1, 1), (-1, 1, -1)$ and 
$(1, -1, -1)$. Each of these involutions will fix an $S(U(2)\times U(1))/S(O(2)\times O(1))=S^2\tilde{\times}S^1$,
each of which intersects in a $S(U(1)\times U(1)\times U(1))/S(O(1)\times O(1)\times O(1))=T^2/(\zzz_2\times \zzz_2)$. 
The corresponding singular set in the orbit space is a weighted claw.
\end{example}


\end{document}